\documentclass[11pt]{amsart}
\usepackage[english]{babel}
\usepackage[utf8]{inputenc}
\usepackage{fullpage}
\usepackage{enumerate}
\usepackage[toc]{appendix}

\usepackage{amsmath}
\usepackage{amsfonts}
\usepackage{amsthm}
\usepackage{mathtools}
\usepackage{dsfont}
\usepackage{esint}
\usepackage{appendix}

\usepackage{graphics,graphicx}

\usepackage[backref=page]{hyperref}
\hypersetup{
    backref=true, %permet d'ajouter des liens dans...
    %pagebackref=true,%...les bibliographies
    plainpages=false,
    bookmarks=true,         % show bookmarks bar?
    unicode=false,          % non-Latin characters in Acrobat\u2019s bookmarks
    pdftoolbar=true,        % show Acrobat\u2019s toolbar?
    pdfmenubar=true,        % show Acrobat\u2019s menu?
    pdffitwindow=true,      % page fit to window when opened
    pdftitle={My title},    % title
    pdfauthor={Author},     % author
    pdfsubject={Subject},   % subject of the document
    pdfcreator={Creator},   % creator of the document
    pdfproducer={Producer}, % producer of the document
    pdfkeywords={keywords}, % list of keywords
    pdfnewwindow=true,      % links in new window
    colorlinks=true,       % false: boxed links; true: colored links   %%%%%%%%%%% MODIFIE
    linkcolor=blue,          % color of internal links
    citecolor=blue,        % color of links to bibliography
    filecolor=blue,      % color of file links
    urlcolor=magenta,          % color of external links  
    urlbordercolor=0 1 1
}

\newtheorem{theo}{Theorem}[section]
\newtheorem{prop}[theo]{Proposition}
\newtheorem{lemma}[theo]{Lemma}
\newtheorem{dfn}[theo]{Definition}

%\newtheoremstyle{rmdotless}{}{}{\upshape}{}{\bfseries}{:}{1em}{}
%\theoremstyle{rmdotless}
%\theoremstyle{remark}
\newtheorem{remk}[theo]{Remark}

\newtheorem{xmpl}[theo]{Example}
\newtheorem{hyp}{Hypothesis}

\usepackage[hang]{subfigure}
 %distance entre une sous-figure et le sous-titre
 %taille de police pour le label (=footnotesize par d<E9>faut, ne rien mettre permet de r<E9>cup<E9>rer la taille courante)
 %distance entre une sous-figure et le sous-titre

%\usepackage{listings}
%\lstset{language=Matlab,extendedchars=true,inputencoding=utf8,basicstyle=\ttfamily}

\newcommand{\R}{\mathbb R}
\newcommand{\N}{\mathbb N}

\newcommand{\cH}{\mathcal{H}}
\newcommand{\cK}{\mathcal{K}}
\newcommand{\cC}{\mathcal{C}}
\newcommand{\cE}{\mathcal{E}}
\newcommand{\cF}{\mathcal{F}}

\newcommand{\cL}{\mathcal{L}}

\newcommand{\cM}{\mathcal{M}}

\newcommand{\cT}{\mathcal{T}}
\newcommand{\cV}{\mathcal{V}}

\def\xC{{\rm C}}
\def\xLip{ {\rm Lip} }
\def\xlip{ {\rm lip} }

\def\xL{{\rm L}}
\def\xW{{\rm W}}
\def\xdiv{{\rm div}}
\def\xdiam{{\rm diam}}

\newcommand{\V}{\|V\|}

\newcommand{\hv}{{\hat v}}

\newcommand{\G}{G_{d,n}}
\newcommand{\Div}{\text{div}}

\DeclareMathOperator*{\argmin}{arg\,min}
\DeclareMathOperator*{\supp}{spt}
\DeclareMathOperator*{\dist}{dist}

\DeclareMathOperator*{\diam}{diam}

\setcounter{tocdepth}{2}

%%% Commands for revision: \replace{old}{new} and \footred{some comment appearing in red} %%%
\usepackage{color}
\definecolor{grey}{rgb}{.7,.7,.7}
  % for text mode only

\newcommand{\one}{\mathds{1}}
\renewcommand{\phi}{\varphi}
\renewcommand{\epsilon}{\varepsilon}
\newcommand{\e}{\epsilon}
\renewcommand{\G}{G_{d,n}}

%%%%%%%%%%%%%%%%%%

\title{A varifold approach to surface approximation}
\author{Blanche \textsc{Buet}, Gian Paolo \textsc{Leonardi}, and Simon \textsc{Masnou}}
\date{\today}
\keywords{Varifold, regularized first variation, approximate mean curvature, point cloud}
\thanks{G.P.~Leonardi has been supported by GNAMPA-INdAM (2015 project: \textit{Problemi isoperimetrici e Teoria Geometrica della Misura in Spazi Metrici}). B. Buet and S. Masnou acknowledge the support of French {\it Agence Nationale de la Recherche} under grant ANR-12-BS01-0014-01 (project GEOMETRYA). Part of this work was supported by the LABEX MILYON (ANR-10-LABX-0070) of Universit\'e de Lyon, within the program "Investissements d'Avenir" (ANR-11-IDEX- 0007) operated by the French National Research Agency (ANR).}
\thanks{We warmly thank Julie Digne for her precious help and suggestions regarding C++ libraries and algorithmic implementations of point clouds.}
\subjclass[2010]{Primary: 49Q15. Secondary: 68U05, 65D18}

\numberwithin{equation}{section}

\begin{document}
     
\begin{abstract}
We show that the theory of varifolds can be suitably enriched to open the way to applications in the field of discrete and computational geometry. Using appropriate regularizations of the mass and of the first variation of a varifold we introduce the notion of approximate mean curvature and show various convergence results that hold in particular for sequences of \textit{discrete varifolds} associated with point clouds or pixel/voxel-type discretizations of $d$-surfaces in the Euclidean $n$-space, without restrictions on dimension and codimension. The variational nature of the approach also allows to consider surfaces with singularities, and in that case the approximate mean curvature is consistent with the generalized mean curvature of the limit surface. A series of numerical tests are provided in order to illustrate the effectiveness and generality of the method.
\end{abstract}

\maketitle

\section*{Introduction}
Shape visualization and processing are fundamental tasks in many different fields, such as physics, engineering, biology, medicine, astronomy, art and architecture. This substantially motivates the extremely active research on image processing and computer graphics, that has been carried out in the last decades. Due to the large variety of applications and of capture systems, different discrete models are used for representing shapes: among them we mention polygonal/polyhedral meshes, level sets, pixel/voxel representations, point clouds, splines, CAD models. Despite the obvious differences due to variable data sources and application ranges, it makes sense to ask whether or not different representations might be interpreted as particular instances of a common (and possibly more general) formalism.

In what follows we shall also distinguish between ``structured'' and ``unstructured'' discretizations: the former being those discrete representations which directly encode dimensional and topological properties of the underlying shapes (polygonal/polyhedral meshes, level sets), the latter being those encoding only spatial distribution (point clouds, pixel/voxel-type). Concerning in particular the unstructured discretizations, which arise from statistical sampling and are typically produced by most image capture devices, some post-processing is often needed to extract the geometric information that characterize the underlying, ``real'' surfaces. In this sense, one of the major tasks is the reconstruction of curvatures. 

In the case of point clouds, the reconstruction of curvatures is usually performed by finding a local smooth surface via regression, and then by computing the curvature of the reconstructed surface, like in the Moving Least Squares (MLS) technique, introduced in the seminal article \cite{levin1998} (see also \cite{alexa2003,amenta2004}). 
%where the signed distance of a point from a regression plane is fitted with the graph of a bivariate polynomial. 
There exist also techniques based on geometric integral invariants, that is on asymptotic behaviour of area, volume, covariance matrix (or other integral quantities) inside infinitesimal balls, where the leading term contains the wanted geometric information, see \cite{Yang2006,Clarenz2004} where such approaches are developed on point clouds, \cite{Coeurjolly2014} on digital shapes and \cite{Merigot2009} where robustness to noise is proved. A common drawback of these methods is that they are not suitable for the reconstruction of curvature in presence of singularities, and convergence results are generally obtained under quite strong regularity assumptions.

A different approach is followed in \cite{morvan_cohen_steiner} and \cite{thibert}. There, a notion of \emph{second fundamental measure}, valid both for surfaces and for their discrete approximations, is introduced by relying on the theory of \emph{normal cycles} developed in seminal works of Wintgen~\cite{Wintgen1982}, Z{\"a}hle~\cite{zahle} and Fu~\cite{fu1993} (see also \cite{morvan_book}). Roughly speaking, the idea of those papers is to reconstruct curvature measure information from the offsets of distance-like functions associated with the discrete data. On the one hand, this approach is quite general as it relies on Geometric Measure Theory (in principle, the reconstructed curvature measures may contain singular parts allowing for singularities in the limit shape). On the other hand, this method has been efficiently developed on triangulated surfaces, while its adaptation to point cloud data is not straightforward, see \cite{thibert} for more details.

In the recent works \cite{BlancheThesis,Blanche-rectifiability} the framework of \textit{varifolds} has been proposed as a possible answer to the above question. Varifolds have been originally introduced by Almgren in 1965~\cite{almgren} for the study of critical points of the area functional, and as a model of soap films, soap bubble clusters, and more general physical systems where surfaces or interfaces come into play. Formally, a $d$--varifold $V$ is a Radon measure on the product space $\R^{n}\times \G$, where $\G$ denotes the Grassmannian manifold of (unoriented) $d$-planes in $\R^{n}$, $1\le d< n$ (see Definition \ref{def:gdv}). Roughly speaking, a varifold consists of a joint distribution of mass and of ``tangent'' $d$--planes. It is natural to associate a varifold to a smooth $d$--dimensional surface in $\R^{n}$, and more generally to a $d$--rectifiable set with locally finite $d$--dimensional Hausdorff measure, possibly endowed with a multiplicity function (see Definition \ref{def:rdv}). In this sense, varifolds provide a natural generalization of (unoriented, weighted) surfaces. Varifolds satisfy nice geometric and variational properties (compactness, mass continuity, criteria of rectifiability) and possess a generalized notion of mean curvature encoded in the so-called \textit{first variation} operator~\cite{Allard72,simon}. 

Notwithstanding these nice properties, the theory of varifolds has been essentially developed and used in the context of Geometric Measure Theory and Calculus of Variations by a quite restricted number of specialists, and almost no substantial applications of this theory to discrete and computational geometry, as well as to numerical analysis and image processing, have been proposed up to now. The only exception we are aware of is the work by Charon and Trouv\'e~\cite{trouve_charon}, where varifolds are employed to perform comparisons between triangularized shapes, with applications to computational anatomy. The reason for the consistent lack of practical application of the varifold theory is, probably, twofold. On one hand, most of the theory (and of Geometric Measure Theory in general) consists of very deep, but also extremely technical results, which make the theory appear particularly exotic and only meant to address purely theoretical questions. On the other hand, despite its potentials, the theory of varifolds has been conceived and developed neither for the discrete approximation of surfaces, nor for the analysis of discrete geometric objects. 

The main aim of this paper is to show that varifolds can indeed be successfully used to represent and analyze not only continuous shapes, like curves, surfaces, and rectifiable sets, but also discrete shapes, like point clouds, pixel/voxel-type surfaces, polyhedral meshes, etc. Varifolds associated with discrete shapes will be generically called {\it discrete varifolds}. In order to make discrete varifolds a truly useful tool, a certain extension of the classical theory is required, especially with reference to approximation results, compactness properties, and the choice of appropriate notions of curvature. 

First of all we introduce a \textit{regularized first variation} of a varifold $V$ as the convolution of the standard first variation $\delta V$ (which is a distribution of order $1$) with a regularizing kernel $\rho_{\e}$. Then we convolve the mass $\V$ of the varifold by another kernel $\xi_{\e}$ and define the \textit{approximate mean curvature} vector field $H_{\rho,\xi,\e}^{V}$ as the regularized first variation divided by the regularized mass (whenever the latter is not zero). This way we can define approximate mean curvatures for \textit{any} varifold. Of course these notions of mean curvature depend on the choice of the pair $(\rho_{\e},\xi_{\e})$ of regularizing kernels. Assuming that $\rho_{\e}$ and $\xi_{\e}$ are defined by suitably scaling two given kernels $\rho_{1}$ and $\xi_{1}$, the parameter $\e$ can be understood as a scale at which the mean curvature is evaluated. We remark that the idea of using convolutions of varifolds and of their first variations is not new, as it already appears as a technical tool in Brakke's paper \cite{brakke} on the mean curvature motion of a varifold. What to our knowledge is new is the simple, but at the end very effective, idea of taking regularizations of the mass and of the first variation in order to give consistent notions of mean curvature for all varifolds (and in particular for the discrete ones).
 
The approximate mean curvatures satisfy some nice convergence properties, that are stated in Theorems \ref{thm:convergence1}, \ref{thm:convergence2} and \ref{thm:convergence3}. In these results the notion of \textit{Bounded Lipschitz distance}, a metric similar to Wasserstein distance (see Definition \ref{dfn_flat_distance}), comes into play. By relying on such convergence theorems one can for instance show Gamma-convergence for a class of Willmore-type functionals defined on discrete varifolds, that will be presented in a forthcoming paper. 

A thorough comparison between our approach and the different notions of discrete (mean) curvature, which have been proposed and studied in the past literature, is out of the scope of this paper (we refer the interested reader to the extended survey contained in \cite{najman:hal-01090755}).
However, as an illustrative example of the power and the generality of the varifold setting we show in Section~\ref{section:cotangent} the following, remarkable fact: the classical \textit{Cotangent Formula}, that is widely used for defining the mean curvature of a polyhedral surface $\mathcal P$ at a vertex $v$, can be simply understood as the first variation of the associated varifold $V_{\mathcal P}$ applied to any Lipschitz extension of the piecewise affine basis function $\phi_{v}$ that takes the value $1$ on $v$ and is identically zero outside the patch of triangles around $v$. In this sense, the Cotangent Formula can be understood as the regularization of $\delta V_{\mathcal P}$ by means of the finite family of piecewise affine kernels $\{\phi_{v}(x):\ v \text{ is a vertex of }\mathcal P\}$.

Moreover, in the last section we provide some numerical simulations that show the efficacy and generality, as well as the ease of implementation, of our notion of approximate mean curvature. In particular, our simulations confirm the convergence results proved in Section \ref{section:AMC} with even better rates than those theoretically expected, not only in the case of smooth limit surfaces but also for a much larger class of generalized surfaces, possibly endowed with singularities.
\medskip

For the reader's convenience we provide a more detailed description of the contents of the paper.

In Sections \ref{section:pre} and \ref{section:varifolds} we set some basic notation and briefly introduce the notion of varifold together with some essential facts from the theory of varifolds, that will be recalled in the subsequent sections. We also define the Bounded Lipschitz Distance $\Delta^{1,1}$, which locally metrizes the weak-$*$ convergence of varifolds and will be systematically used throughout the paper. At the end of the section we define the class of discrete varifolds, focusing in particular on those of ``volumetric'' and ``point-cloud'' type. We then prove an equivalence result, Proposition \ref{prop_comparison_volumetric_point_cloud}, which says that one can switch between the volumetric and point-cloud types up to paying some explicit price in terms of the $\Delta^{1,1}$ distance.

Section \ref{section:RFV} is devoted to the definition of the regularized first variation $\delta V\ast \rho_{\e}$, as well as to show some related properties. We point out that, here and in the rest of the paper, we take radially symmetric kernels with some minimal regularity, as specified in Section \ref{section:pre} (see also Hypothesis \ref{rhoepsxieps}); we shall also denote by $\rho$ the one-dimensional profile of the kernel $\rho_{1}$. A compactness and rectifiability result using the regularized first variation is shown at the end of the section, see Theorem \ref{bounded_first_variation_condition_for_sequences}. We stress that Theorem \ref{bounded_first_variation_condition_for_sequences}, a partial extension of a classical result due to Allard \cite{Allard72}, can be used for showing existence of solutions to variational problems on discrete varifolds.

The notion of approximate mean curvature $H_{\rho,\xi,\e}^{V}$ of a varifold $V$ is introduced and studied in Section \ref{section:AMC}. The consistency of the notion stems from two approximation results that we show to hold for rectifiable varifolds with locally bounded first variation. The first one, Theorem \ref{thm:convergence1}, states the pointwise convergence of the approximate mean curvature $H_{\rho,\xi, \e}^{V}$ to the generalized mean curvature $H$ of $V$ at $\V$-almost every point, without an explicit convergence rate. The second one, Theorem \ref{thm:convergence2}, shows the following fact: given a rectifiable varifold $V$ with bounded first variation, a sequence $V_{i}$ of general varifolds, a sequence of points $z_{i}$ converging to a point $x$ in the support of $V$, and two infinitesimal sequences of parameters $d_{i},\e_{i}$, such that the local $\Delta^{1,1}$ distance between $V$ and $V_{i}$ is controlled from above by $d_{i}$ up to a renormalization (see \eqref{eq_thm_pointwise_cv_hyp2}), then one obtains that the modulus of the difference $H_{\rho,\xi,\e_{i}}^{V_{i}}(z_{i}) - H_{\rho,\xi,\e_{i}}^{V}(x)$ can be asymptotically estimated by $\frac{d_{i}+|z_{i}-x|}{\e_{i}^{2}}$. We remark that the infinitesimality of the parameter $d_{i}$ appearing in the theorem is always guaranteed by the convergence of $V_{i}$ to $V$ in the sense of varifolds. Moreover, by combining the above estimate with Theorem \ref{thm:convergence1} one deduces that $H_{\rho,\xi,\e_{i}}^{V_{i}}(z_{i})$ converges to $H(x)$ as soon as $\frac{d_{i}+|z_{i}-x|}{\e_{i}^{2}}$ is infinitesimal as $i\to \infty$. 
%We anticipate that, whenever $V$ is of class $C^{1,\beta}$ in the sense of Definition \ref{def:piecewiseC1beta} and $V_{i}$ is chosen as an unstructured discretization of $V$ (of volumetric or point-cloud type) associated with some mesh of size $\delta_{i}$, then one can choose $d_{i} = \delta_{i}^{\beta}$.
%Consequently, if $V$ is of class $C^{1,\beta}$ then by neglecting the contribution of the offset $|z_{i}+x|$ (i.e., we can assume that it is smaller than the mesh size $\delta_{i}$) we obtain that the asymptotic convergence rate of the approximate mean curvatures of the discrete varifolds $V_{i}$ is estimated by $\frac{\delta_{i}^{\beta}}{\e_{i}^{2}}$. 
For the last convergence result of the section, Theorem \ref{thm:convergence3}, we introduce an orthogonal approximate mean curvature $H_{\rho,\xi,\e}^{V,\perp}(x)$ which essentially consists in projecting $H_{\rho,\xi,\e}^{V}(x)$ onto the ``normal space'' to $V$ at $x$. Then we assume that the varifold $V$ is associated with a $C^{2}$ manifold $M$, and take a sequence of varifolds $V_{i}$ that converge to $V$ with explicit controls on the local $\Delta^{1,1}$ distance between the mass measures $\V$ and $\|V_{i}\|$ as well as on suitably defined, $L^{\infty}$-type local distances between the ``tangential parts'' of the varifolds (see assumptions \eqref{eq_thm_pointwise_cvSmooth_hyp2} and \eqref{eq_thm_pointwise_cvSmooth_hyp3}). Under these assumptions the theorem shows better convergence rates with respect to the ones guaranteed by Theorem \ref{thm:convergence2} (basically, we obtain $\frac{d_{i}}{\e_{i}}$ instead of $\frac{d_{i}}{\e_{i}^{2}}$ as in the asymptotic convergence estimate \eqref{epsdeltaboh}).

Section \ref{section:pairs} addresses the question whether some specific choice of the pair $(\rho,\xi)$ of kernel profiles might lead to improved convergence rates in Theorems \ref{thm:convergence2} and \ref{thm:convergence3}. Even though we are not able to give a completely satisfactory answer to this question, we provide some heuristic motivation for choosing $(\rho,\xi)$ within a class of kernel pairs that satisfy what we call the \textit{Natural Kernel Pair} (NKP) property, defined as follows: we say that a pair of kernel profiles $(\rho,\xi)$ satisfies the (NKP) property if $\rho$ is monotonically decreasing on $[0,+\infty)$ and of class $W^{2,\infty}$, while $\xi$ is given by the formula
\[
\xi(t) = -\frac{t\rho'(t)}{n}\,.
\]
The (NKP) property relies on an observation that we made after performing a Taylor expansion of the difference $H_{\rho,\xi,\e}^{V}(x) - H(x)$ when $V$ is associated with a smooth manifold $M$ of class $C^{3}$ and $x$ is a point in the relative interior of $M$. While it is still unclear whether (NKP) guarantees or not some better convergence rates than those proved in Theorems \ref{thm:convergence2} and \ref{thm:convergence3}, we have observed a substantial improvement of convergence and stability in all numerical tests we have performed, some of which are presented in Section \ref{section:numerics}. 
\medskip

In the remaining sections we consider some theoretical and applied aspects of discrete varifolds.

In Section \ref{section:DAV} we show that point cloud varifolds as well as discrete volumetric varifolds can be used to approximate more general rectifiable varifolds. Specifically we show in Theorem \ref{diffuse_discrete_varifolds_theorem} that any rectifiable varifold $V$ with bounded first variation can be approximated by a sequence $V_{i}$ of discrete varifolds of the previous types, constructed on generic meshes of the space with mesh-size $\delta_{i}$. If moreover $V$ belongs to the narrower, but quite natural class of \textit{piecewise $C^{1,\beta}$} varifolds (see Definition \ref{def:piecewiseC1beta}) then we also show in the same theorem an explicit estimate on the distance $\Delta^{1,1}(V,V_{i})$, which is proved to be smaller than $\delta_{i}^{\beta}$, up to multiplicative constants and for $i$ sufficiently large. 
%Of course if we assume that either $V$ is piecewise $C^{1,\beta}$, or it is associated with a manifold of class $C^{3}$, and couple this result with Theorems \ref{thm:convergence2} and \ref{thm:convergence3}, we get that the convergence rates of the approximate mean curvature $H_{\rho,\xi,\e_{i}}^{V_{i}}$ and of the orthogonal approximate mean curvature $H_{\rho,\xi,\e_{i}}^{V_{i},\perp}$ are estimated by, respectively, $\frac{\delta_{i}^{\beta}}{\e_{i}^{2}}$ and $\frac{\delta_{i}}{\e_{i}}$. This is proved in Corollary \ref{?} ADD.

Section \ref{section:cotangent} is devoted to the illustration of a simple, but remarkable fact concerning the classical \textit{Cotangent Formula}, which is one of the fundamental tools in discrete geometry as it allows to define a vector mean curvature functional $\hat H$ for a triangulated surface. Using the varifold formalism one can understand the formula as the first variation of the associated polyhedral varifold $V$ evaluated on a Lipschitz extension of a generic nodal function $\phi$. This fact, which is proved in Proposition~\ref{prop_CotangentFormula}, gives in our opinion a strong support to the suitability of the varifold approach to discrete geometry.
%We remark that $\delta V(\phi)$ is an appropriate notation for the above-mentioned evaluation, as it does not depend on the choice of the extension. 

Finally, we put in Section \ref{section:numerics} some numerical tests for confirming the theoretical convergence properties of the approximate mean curvature as well as the effectiveness of the (NKP) property, according to the results of Sections \ref{section:AMC} and \ref{section:pairs}. For the sake of computing the exact mean curvture vector, most of the tests have been made by taking some parametrized shapes (a circle, an ellipse, a ``flower'', an ``eight'', a union of two circles, and some standard double bubbles in 2D and 3D) and by discretizing them as point cloud varifolds. Then we choose various kernel pairs $(\rho,\xi)$ and compute the approximate mean curvatures (with or without projection on the orthogonal space) for various choices of the scale parameter $\e$, as well as the relative error with respect to the theoretical ones. We also examine the behavior of the approximate mean curvature near the singularities in the ``eight'', in the union of two circles, and in a couple of standard double bubbles in 2D and 3D. In particular we obtain slightly nicer results by taking averages of $H^{V}_{\e,\rho,\xi}$ over balls of radius $2\e$, which actually does not affect the validity of our theoretical convergence results. Our tests confirm the robustness of the notion of approximate mean curvature in both 2D and 3D cases, even in presence of singularities. We finally conclude the section with two figures showing the intensity of approximate mean curvatures computed for point clouds representation of a dragon and a statue.

\section{Preliminary notations and definitions}
\label{section:pre} 
We adopt the following notations:
\begin{itemize}

\item $\N$ and $\R$ denote, respectively, the set of natural and of real numbers.

%topological and metric space notations

\item Given $n\in \N$, $n\ge 1$, $x\in \R^{n}$ and $r>0$, we set $B_r(x) = \left\lbrace y\in \R^{n} \, | \, |y-x| < r \right\rbrace$.

\item $A^\delta = \bigcup_{x \in A} B_\delta (x) = \{ y \in \R^n \, | \, d( y,A) <\delta \}$.

\item $\cL^n$ is the $n$--dimensional Lebesgue measure and $\omega_n = \cL^n (B_1(0))$.

\item $\cH^d$ is the $d$--dimensional Hausdorff measure in $\R^{n}$.

\item If $B$ is an open set, writing $A\subset\subset B$ means that $A$ is a relatively compact subset of $B$.

\item $\G$ denotes the Grassmannian manifold of $d$-dimensional vector subspaces of $\R^n$. A $d$-dimensional subspace $T$ will be often identified with the orthogonal projection onto $T$ (sometimes denoted as $\Pi_{T}$). $\G$ is equipped with the metric $d(T,P) = \Vert T - P \Vert$, where $\Vert \cdot \Vert$ denotes the operator norm on the space $L(\R^{n};\R^{n})$ of linear endomorphisms of $\R^{n}$.

\item Given a continuous $\R^m$--valued function $f$ defined in $\Omega$, its support $\supp f$ is the closure in $\Omega$ of $\{ y \in \Omega \: | \: f(y) \neq 0 \}$.

\item $\xC_o^0 (\Omega)$ is the closure of $\xC_c^0(\Omega)$ in $\xC^{0}(\Omega)$ with respect to the norm $\|u \|_{\infty} = \sup_{x\in \Omega}|u(x)|$.

\item Given $k \in \N$, $\xC_c^k (\Omega)$ is the space of real--valued functions of class $\xC^k$ with compact support in $\Omega$.

\item $\xC^{0,1}(\R)$ and $\xC^{1,1}(\R)$ denote, respectively, the space of Lipschitz functions and the space of functions of class $C^{1}$ with derivative of class Lipschitz on $\R$. On such spaces we shall consider the norms $\|u\|_{1,\infty} = \|u\|_{\infty}+\xlip(u)$ and $\|u\|_{2,\infty} = \|u\|_{\infty}+\|u'\|_{1,\infty}$, respectively.

\item $\xLip_L (X)$ is the space of Lipschitz functions in the metric space $(X,\delta)$ with Lipschitz constant $\leq L$.
% measure theory

\item We denote by $| \mu |$ the total variation of a measure $\mu$.
\item $\cM_{loc} (X)^m$ is the space of $\R^m$--valued Radon measures and $\cM (X)^m$ is the space of $\R^m$--valued finite Radon measures on the metric space $(X,\delta)$.
\end{itemize}

From now on, we fix $d, \, n \in \N$ with $1\leq d < n$. By $\Omega \subset \R^n$ we shall always denote an open set. 

We also fix the notations and some basic assumptions about the regularizing kernels that will be used in the sequel. We shall consider radially symmetric, non-negative kernels $\rho_{1},\xi_{1}$ defined on $\R^{n}$, such that $\rho_{1}(x) = \rho(|x|)$ and $\xi_{1}(x) = \xi(|x|)$ for suitable one-dimensional, even profile functions $\rho,\xi$ with compact support in $[-1,1]$. We also assume the normalization condition
\[
\int_{\R^{n}}\rho_{1}(x)\, dx = \int_{\R^{n}}\xi_{1}(x)\, dx = 1\,,
\]
which amounts to 
\[
n\omega_{n}\int_{0}^{1}\rho(t) t^{n-1}\, dt = n\omega_{n}\int_{0}^{1}\xi(t) t^{n-1}\, dt = 1\,.
\]
For any given $\e>0$ and $x\in\R^{n}$ we set
\[
\rho_{\e}(x) = \e^{-n}\rho_{1}(x/\e)\qquad\text{and}\qquad \xi_{\e}(x) = \e^{-n}\xi_{1}(x/\e)\,.
\]
We shall at least assume that $\rho\in \xC^{1}(\R)$ and that $\xi\in \xC^{0}(\R)$. At some point, we shall require extra regularity on $\rho$ and $\xi$, namely that $\rho\in W^{2,\infty}$ and $\xi\in W^{1,\infty}$ (see Hypothesis \ref{rhoepsxieps}).

\section{Varifolds}
\label{section:varifolds}

\subsection{Some classical definitions and properties of varifolds}

We recall here a few facts about varifolds, see for instance \cite{simon} for more details.

\noindent Let us start with the general definition of varifold:
\begin{dfn}[General $d$--varifold]\label{def:gdv}\rm 
Let $\Omega \subset \R^n$ be an open set. A $d$--varifold in $\Omega$ is a non-negative Radon measure on $\Omega \times G_{d,n}$. 
\end{dfn}

An important class of varifolds is represented by the so-called \emph{rectifiable} varifolds.
\begin{dfn}[Rectifiable $d$--varifold]\label{def:rdv}\rm
Given an open set $\Omega \subset \R^n$, let $M$ be a countably $d$--rectifiable set and $\theta$ be a non negative function with $\theta > 0$ $\cH^d$--almost everywhere in $M$. A rectifiable $d$--varifold $V= v(M,\theta)$ in $\Omega$ is a non-negative Radon measure on $\Omega \times G_{d,n}$ of the form $V= \theta \mathcal{H}^d_{| M} \otimes \delta_{T_x M}$ i.e.
\[
\int_{\Omega \times G_{d,n}} \varphi (x,T) \, dV(x,T) = \int_M \varphi (x, T_x M) \, \theta(x) \, d \mathcal{H}^d (x) \quad \forall \varphi \in \xC_c^0 (\Omega \times G_{d,n} , \mathbb{R})
\] where $T_x M$ is the approximate tangent space at $x$ which exists $\mathcal{H}^d$--almost everywhere in $M$. The function $\theta$ is called the \emph{multiplicity} of the rectifiable varifold. If additionally $\theta(x)\in \N$ for $\mathcal{H}^d$--almost every $x\in M$, we say that $V$ is an \emph{integral} varifold.

\end{dfn}

\begin{dfn}[Mass]\rm
The mass of a general varifold $V$ is the positive Radon measure defined by $\Vert V \Vert (B) = V (\pi^{-1} (B))$ for every $B \subset \Omega$ Borel, with $\pi:\Omega\times G_{d,n}\to \Omega$ defined by $\pi(x,S) = x$. 
In particular, the mass of a $d$--rectifiable varifold $V=v(M,\theta)$ is the measure $\V=\theta \mathcal{H}^d_{| M}$.
\end{dfn}

The following result is proved via a standard disintegration of the measure $V$ (see for instance \cite{ambrosio}).
\begin{prop}[Young-measure representation]\label{prop:Young} Given a $d$--varifold $V$ on $\Omega$, there exists a family of probability measures $\{\nu_{x}\}_{x}$ on $\G$ defined for $\V$-almost all $x\in \Omega$,  such that $V = \V\otimes \{\nu_{x}\}_{x}$, that is,
\[
V(\phi) = \int_{x\in\Omega}\int_{S\in\G}\phi(x,S)\, d\nu_{x}(S)\, d\V(x)
\]
for all $\phi\in \xC^{0}_{c}(\Omega\times\G)$.
\end{prop}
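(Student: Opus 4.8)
The statement to prove is Proposition~\ref{prop:Young}, the Young-measure (disintegration) representation of a $d$-varifold.

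The plan is to invoke the general disintegration theorem for Radon measures on product spaces (see \cite{ambrosio}), applied to the projection $\pi:\Omega\times\G\to\Omega$, $\pi(x,S)=x$. First I would observe that the pushforward $\pi_{\#}V$ of the varifold $V$ under $\pi$ is precisely the mass measure $\V$: indeed, by definition $\pi_{\#}V(B)=V(\pi^{-1}(B))=\V(B)$ for every Borel $B\subset\Omega$, so this costs nothing. Since $V$ is a Radon measure on the locally compact, separable metric space $\Omega\times\G$ (note $\G$ is a compact smooth manifold, hence $\Omega\times\G$ is locally compact and second countable), and $\V=\pi_{\#}V$ is a Radon measure on $\Omega$, the disintegration theorem applies and yields a $\V$-a.e.\ defined family $\{\nu_x\}_{x\in\Omega}$ of Radon measures on $\G$, weakly-$*$ $\V$-measurable in $x$, such that for every bounded Borel (in particular every $\phi\in\xC^0_c(\Omega\times\G)$) one has
\[
V(\phi)=\int_{\Omega}\left(\int_{\G}\phi(x,S)\,d\nu_x(S)\right)d\V(x)\,,
\]
which is exactly the claimed identity $V=\V\otimes\{\nu_x\}_x$.

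It remains to upgrade each $\nu_x$ to a \emph{probability} measure. This follows by testing the disintegration formula against functions of the form $\phi(x,S)=\psi(x)$ with $\psi\in\xC^0_c(\Omega)$ (formally, by approximating the indicator $\one_{\pi^{-1}(B)}$, or directly since the fibre is compact one can test with the constant function $1$ on $\G$): on one hand $V(\psi\circ\pi)=\int_\Omega\psi\,d(\pi_\#V)=\int_\Omega\psi\,d\V$, and on the other hand the formula gives $\int_\Omega\psi(x)\,\nu_x(\G)\,d\V(x)$. Since $\psi$ is arbitrary, $\nu_x(\G)=1$ for $\V$-a.e.\ $x$, so after modifying $\{\nu_x\}$ on a $\V$-null set we may assume $\nu_x$ is a probability measure on $\G$ for every $x$ where it is defined.

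The only genuinely nontrivial input is the disintegration theorem itself, which we are quoting from \cite{ambrosio}; its hypotheses (a Radon measure on a product of nice spaces, with the base being the image under the canonical projection) are readily checked here. So there is no real obstacle: the proof is essentially a verification that we are in the setting where disintegration applies, followed by the normalization argument for the fibre measures. The compactness of $\G$ makes both the measurability and the finiteness ($\nu_x(\G)=1$, not merely $\nu_x(\G)<\infty$) immediate, so no further care about local finiteness on the fibres is needed.
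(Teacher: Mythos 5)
Your proof is correct and follows exactly the route the paper indicates: the paper simply cites the standard disintegration theorem (from \cite{ambrosio}) applied to $V$ with respect to the projection $\pi$, noting $\pi_{\#}V=\V$, which is precisely your argument including the normalization of the fibre measures to probabilities.
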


\begin{dfn}[Convergence of varifolds]\rm
A sequence of $d$--varifolds $(V_i)_i$ weakly--$\ast$ converges to a $d$--varifold $V$ in $\Omega$ if, for all $\varphi \in \xC_c^0 (\Omega \times G_{d,n})$,
\[
\langle V_i,\phi\rangle=\int_{\Omega \times \G} \phi (x,P) \, dV_i (x,P) \xrightarrow[i \to \infty]{} \langle V,\phi\rangle= \int_{\Omega \times \G} \phi(x,P) \, dV(x,P) \: .
\]
\end{dfn}

We now recall the definition of \emph{Bounded Lipschitz distance} between two Radon measures.  It is also called \emph{flat metric} and can be seen as a modified $1$--Wasserstein distance which allows the comparison of measures with different masses (see \cite{villani_book}, \cite{piccoli_rossi}). In contrast, the $1$--Wasserstein distance between two measures with different masses is infinite.

\begin{dfn}[Bounded Lipschitz distance] \label{dfn_flat_distance}\rm 
Being $\mu$ and $\nu$ two Radon measures on a locally compact metric space $(X,d)$, we define
\[
\Delta^{1,1} (\mu,\nu) = \sup \left\lbrace \left| \int_X \phi \, d\mu - \int_X \phi \, d\nu \right| \: : \:\phi \in \xLip_1 (X), \:  \| \phi \|_{\infty} \leq 1 \right\rbrace\,.
\]
It is well-known that $\Delta^{1,1}(\mu,\nu)$ defines a distance on the space of Radon measures on $X$, called the \emph{Bounded Lipschitz distance}.
\end{dfn}

Hereafter we introduce some special notation for the $\Delta^{1,1}$ distance between varifolds.
\begin{dfn} \label{dfn_localized_bounded_lipschitz_distance}\rm 
Let $\Omega \subset \R^n$ be an open set and let $V,W$ be two $d$--varifolds on $\Omega$. For any open set $U\subset \Omega$ we define 
\[
\Delta^{1,1}_U (V,W) = \sup \left\lbrace \left| \int_{\Omega \times \G} \phi \, dV - \int_{\Omega \times \G} \phi \, dW \right| \: : \: \begin{array}{l}
\phi \in \xLip_1 (\Omega \times \G), \:  \| \phi \|_{\infty} \leq 1 \\
\text{ and } \supp \phi \subset U \times \G \end{array} \right\rbrace 
\]
and
\[
\Delta^{1,1}_U(\| V \|, \| W \|) = \sup \left\lbrace \left| \int_{\Omega } \phi \, d \| V \| - \int_{\Omega } \phi \, d \| W \| \right| \: : \: \begin{array}{l}
\phi \in \xLip (\Omega), \:  \| \phi \|_{\infty} \leq 1 \\
\text{ and } \supp \phi \subset U  \end{array} \right\rbrace \: . 
\]
We shall often drop the subscript when $U  = \Omega$, that is we set 
\[
\Delta^{1,1}(V,W) = \Delta^{1,1}_{\Omega}(V,W)\quad\text{and}\quad \Delta^{1,1}(\|V\|,\|W\|) = \Delta^{1,1}_{\Omega}(\|V\|,\|W\|)\,,
\] 
thus making the dependence upon the domain implicit whenever this does not create any confusion.
\end{dfn}

The following fact is well-known (see \cite{villani_book,bogachev2007}).
\begin{prop}\label{BLDversusWeakstar}
Let $\mu,(\mu_{i})_{i}$, $i\in \N$, be Radon measures on a locally compact metric space $(X,d)$. Assume that $\mu(X) + \sup_{i}\mu_{i}(X) <+\infty$ and that there exists a compact set $K\subset X$ such that the supports of $\mu$ and of $\mu_{i}$ are contained in $K$ for all $i\in \N$. Then $\mu_{i}\xrightharpoonup[]{\: \ast \:}\mu$ if and only if $\Delta^{1,1}(\mu_{i},\mu)\to 0$ as $i\to\infty$.
\end{prop}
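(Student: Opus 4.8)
The plan is to establish the two implications separately. The direction ``$\Delta^{1,1}(\mu_{i},\mu)\to 0\ \Rightarrow\ \mu_{i}\xrightharpoonup[]{\ast}\mu$'' will use only the uniform mass bound, while the converse is where the common compact support $K$ is genuinely needed (via Arzel\`a--Ascoli).

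For the first implication, I would fix $\phi\in\xC_c^{0}(X)$ and $\varepsilon>0$ and approximate $\phi$ uniformly by a Lipschitz function: since $\phi$ is uniformly continuous (being continuous with compact support), the inf-convolution $\psi(x)=\inf_{y\in X}\bigl(\phi(y)+\lambda\, d(x,y)\bigr)$ is $\lambda$-Lipschitz, bounded, and satisfies $\|\psi-\phi\|_{\infty}<\varepsilon$ for $\lambda$ large enough. Writing $M=\max\{\xlip(\psi),\|\psi\|_{\infty}\}$, the rescaled function $\psi/M$ belongs to $\xLip_{1}(X)$ with $\|\psi/M\|_{\infty}\le 1$, hence
\[
\Big|\int_{X}\psi\, d\mu_{i}-\int_{X}\psi\, d\mu\Big|\le M\,\Delta^{1,1}(\mu_{i},\mu)\xrightarrow[i\to\infty]{}0\,.
\]
Combining this with the bounds $\bigl|\int_{X}(\phi-\psi)\, d\nu\bigr|\le \varepsilon\,\nu(X)$ for $\nu\in\{\mu,\mu_{i}\}$ and the hypothesis $\mu(X)+\sup_{i}\mu_{i}(X)<+\infty$, one gets
\[
\limsup_{i}\Big|\int_{X}\phi\, d\mu_{i}-\int_{X}\phi\, d\mu\Big|\le \varepsilon\bigl(\mu(X)+\sup_{i}\mu_{i}(X)\bigr)\,,
\]
and letting $\varepsilon\to 0$ yields the weak-$*$ convergence.

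For the converse, I would argue by contradiction: if $\Delta^{1,1}(\mu_{i},\mu)\not\to 0$, then along a subsequence (not relabelled) there are $\delta>0$ and functions $\phi_{i}\in\xLip_{1}(X)$ with $\|\phi_{i}\|_{\infty}\le 1$ and $\bigl|\int_{X}\phi_{i}\, d\mu_{i}-\int_{X}\phi_{i}\, d\mu\bigr|\ge\delta$. Restricted to the compact set $K$, the family $\{\phi_{i}|_{K}\}$ is uniformly bounded and equi-Lipschitz, so by Arzel\`a--Ascoli a further subsequence converges uniformly on $K$ to some $\phi\in\xLip_{1}(K)$ with $\|\phi\|_{\infty}\le1$. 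Since $K$ is compact in the locally compact space $X$, I can extend $\phi$ to a function $\bar\phi\in\xC_c^{0}(X)$ (Tietze extension, multiplied by a Urysohn cutoff supported in a relatively compact neighbourhood of $K$ and equal to $1$ on $K$). Using that $\mu$ and all the $\mu_{i}$ are supported in $K$, I split
\[
\Big|\int_{X}\phi_{i}\, d\mu_{i}-\int_{X}\phi_{i}\, d\mu\Big|\le \Big|\int_{X}(\phi_{i}-\bar\phi)\, d\mu_{i}\Big|+\Big|\int_{X}\bar\phi\, d\mu_{i}-\int_{X}\bar\phi\, d\mu\Big|+\Big|\int_{X}(\bar\phi-\phi_{i})\, d\mu\Big|\,.
\]
The first and third terms are bounded by $\|\phi_{i}-\phi\|_{\xC^{0}(K)}$ times the uniformly bounded masses, hence tend to $0$; the middle term tends to $0$ by weak-$*$ convergence of $\mu_{i}$ to $\mu$ tested against $\bar\phi\in\xC_c^{0}(X)$. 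This contradicts the lower bound $\delta$, completing the proof.

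The main obstacle is precisely the converse implication, and within it the passage from the near-optimal sequence of Lipschitz test functions $\phi_{i}$ to a single limiting test function: this is where compactness of $K$ enters twice, to run Arzel\`a--Ascoli and to replace $\phi_{i}$ by its uniform limit inside the integrals. Without a common compact support the statement is false (for instance $\mu_{i}=\delta_{i}$ on $\R$ converges weakly-$*$ to $0$ but has $\Delta^{1,1}(\mu_{i},0)=1$). The remaining ingredients --- rescaling into the unit Lipschitz ball, uniform Lipschitz approximation of continuous functions, and the Tietze/Urysohn extension --- are routine.
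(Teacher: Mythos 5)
Your proof is correct. Note that the paper itself does not prove Proposition \ref{BLDversusWeakstar}: it is stated as a well-known fact with references to Villani and Bogachev, so there is no in-paper argument to compare against. Your two-sided argument is the standard one and is complete: the inf-convolution $\psi(x)=\inf_y\bigl(\phi(y)+\lambda\, d(x,y)\bigr)$ does give a uniform Lipschitz approximation of a $\xC_c^0$ test function (using only the uniform mass bound for that direction), and in the converse the contradiction argument via Arzel\`a--Ascoli on the common compact support $K$, followed by replacing the near-optimal test functions $\phi_i$ by their uniform limit (extended to a $\xC_c^0$ function via Tietze plus a Urysohn cutoff) and invoking weak-$*$ convergence on that single test function, is exactly where the hypothesis on $K$ is needed; your counterexample $\mu_i=\delta_i$ correctly shows it cannot be dropped. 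The only implicit point worth flagging is that you use $\bigl|\int(\phi-\psi)\,d\nu\bigr|\le\varepsilon\,\nu(X)$, which tacitly relies on the measures being non-negative (as they are throughout the paper); for signed measures one would replace $\nu(X)$ by the total variation.
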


We define the tangential divergence of a vector field, as follows. For all $P \in G$ and $X =(X_1, \ldots , X_n) \in \xC_c^1 (\Omega , \R^n )$ we set
\[
{\rm{div}}_P X (x) = \sum_{j=1}^n \langle \nabla^P X_j (x) , e_j \rangle = \sum_{j=1}^n \langle P (\nabla X_j (x)) , e_j \rangle 
\]
with $(e_1, \ldots, e_n)$ being the canonical basis of $\R^n$. Then we introduce the following definition.
\begin{dfn}[First variation of a varifold, \cite{Allard72}] The first variation of a $d$--varifold in $\Omega \subset \R^n$ is the vector--valued distribution (of order $1$) defined for any vector field $X\in {\xC}_c^1 (\Omega , \R^n )$ as
\[
\delta V(X) = \int_{\Omega \times G_{d,n}} {\xdiv}_P X (x) \, d V (x,P)\,.
\]
\end{dfn}
\begin{remk}\label{remk:morefirstvar}\rm
It is convenient to define the action of $\delta V$ on a function $\phi\in C^{1}_{c}(\Omega)$ as the vector
\[
\delta V(\phi) = \big(\delta V(\phi\, e_{1}),\dots,\delta V(\phi\, e_{n})\big)\,.
\]
We also notice that $\delta V(X)$ is well-defined whenever $X$ is a Lipschitz vector field such that the measure $\V$ of the set of non-differentiability points for $X$ is zero.  
\end{remk}

\noindent
The definition of first variation can be motivated as follows. Let $\phi^{X}_{t}$ be the one-parameter group of diffeomorphisms generated by the flow of the vector field $X$. Let $\Phi^{X}_{t}$ be the mapping defined on $\R^{n}\times\G$ as
\[
\Phi^{X}_{t}(x,S) = (\phi^{X}_{t}(x), d\phi^{X}_{t}(S))\,.
\]
Set $V_{t}$ as the push--forward of the varifold measure $V$ by the mapping $\Phi^{X}_{t}$. Then, assuming $\supp(X) \subset \subset A$ for some relatively compact open set $A\subset \Omega$, one has the identity
\[
\delta V(X) = \frac{d}{dt} \|V_{t}\|(A)_{|t=0}\,.
\]

\noindent The linear functional $\delta V$ is by definition continuous with respect to the $C^{1}$-topology on $C^{1}_{c}(\Omega,\R^{n})$, however in general it is not continuous with respect to the $\xC_c^0$ topology. In the special case when this is satisfied, that is, for any fixed compact set $K\subset\Omega$ there exists a constant $c_{K}>0$, such that for any vector field $X\in \xC_c^1 (\Omega , \R^n)$ with $\supp X \subset K$, one has
\[
| \delta V (X) | \leq c_K \sup_K |X| \: ,
\] 
we say that $V$ has a \emph{locally bounded first variation}. In this case, by Riesz Theorem, there exists a vector--valued Radon measure on $\Omega$ (still denoted as $\delta V$) such that
\[
\delta V (X) = \int_\Omega X \cdot \delta V \quad \text{for every } X \in \xC_c^0 (\Omega,\mathbb{R}^n)
\]
Thanks to Radon-Nikodym Theorem, we can decompose $\delta V$ as 
\begin{equation}\label{RNLdecomp}
\delta V = - H \|V\| + \delta V_s \: ,
\end{equation}
where $H \in \left( \xL^1_{loc}(\Omega, \| V \|) \right)^n$ and $\delta V_s$ is singular with respect to $\V$. The function $H$ is called the {\it generalized mean curvature vector}. By the divergence theorem, $H$ coincides with the classical mean curvature vector if $V=v(M,1)$, where $M$ is a $d$-dimensional submanifold of class $\xC^2$.

\subsection{Discrete varifolds}
\label{subsection:DV}

We introduce some types of varifolds, that we call \textit{discrete} as they are defined by a finite set of parameters. Although a varifold structure can be of course associated with any polyhedral complex, we shall essentially focus on ``unstructured'' discrete varifolds (discrete volumetric or point cloud, see below). It is however worth noticing that all definitions and results of Sections \ref{section:RFV} and \ref{section:AMC} are valid in particular for all sequences of discrete varifolds, including those of polyhedral type. Concerning the results of Section \ref{section:DAV}, the construction of approximations $V_{i}$ of a rectifiable varifold $V$, such that $\Delta^{1,1}(V_{i},V)$ is infinitesimal, as shown in Theorem \ref{diffuse_discrete_varifolds_theorem}, seems to be quite delicate in the polyhedral setting, since the tangent directions are prescribed by the directions of the cells of the polyhedral surface, which are not necessarily converging when the polyhedral surfaces converge to a smooth one in Hausdorff distance: think of Schwarz Lantern for instance. (However, a related polyhedral approximation has been obtained in \cite{fu1993}, see also \cite{feuvrier2008}).
The construction of such approximations is much simpler in the case of volumetric and point cloud varifolds.

Let $\Omega \subset \R^n$ be an open set. By a \emph{mesh} of $\Omega$ we mean a countable partition $\cK$ of $\Omega$, that is, a finite collection of pairwise disjoint subdomains (``cells'') of $\Omega$ such that $\left\lbrace K \in \cK \, : \, K \cap B \neq \emptyset \right\rbrace$ is finite for any bounded set $B \subset \Omega$ and
\[
\Omega = \bigsqcup_{K \in \cK} K\,.
\]
Here, no other assumptions on the geometry of the cells $K\in \cK$ are needed. We shall often refer to the \emph{size} of the mesh $\cK$, denoted by
\[
\delta = \sup_{K \in \cK} \diam K \,.
\]

We come to the definition of discrete volumetric varifold (see \cite{Blanche-rectifiability}).
\begin{dfn}[Discrete volumetric varifold] \label{diffuse_discrete_varifolds} 
Let $\cK$ be a mesh of $\Omega$ and let $\{(m_K ,P_K)  \}_{K \in \cK} \subset \R_+ \times \G$. We define the \emph{discrete volumetric varifold}
\[
V_\cK^{vol} = \sum_{K \in \cK} \frac{m_K}{|K|} \cL^n_{| K} \otimes \delta_{P_K}\,,\quad \text{ where } |K| = \cL^n (K) \: .
\]
\end{dfn}
We remark that discrete volumetric $d$--varifolds are typically not $d$--rectifiable (indeed their support is $n$--rectifiable, while $d<n$).

We can similarly define point cloud varifolds.

\begin{dfn}[Point cloud varifolds] 
Let $\{ x_i \}_{i=1 \ldots N} \subset \R^n$ be a point cloud, weighted by the masses $\{ m_i \}_{i=1 \ldots N}$ and provided with directions $\{ P_i \}_{i=1 \ldots N} \subset \G$. We associate the  collection of triplets $\{(x_{i},m_{i},P_{i}):\ i=1,\dots,N\}$ with the \emph{point cloud $d$--varifold}
\[
V^{pt} = \sum_{i=1}^N m_i \, \delta_{x_i} \otimes \delta_{P_i} \: ,
\]
so that for $\phi \in \xC_c^0 (\Omega \times \G)$,
\[
\int \phi \, dV^{pt} = \sum_{i=1}^N m_i\phi (x_i,P_i) \: .
\]
Of course, a point cloud varifold is not $d$-rectifiable as its support is zero-dimensional.
\end{dfn}

The idea behind these ``unstructured'' types of discrete varifolds is that they can be used to discretize more general varifolds. For instance, given a $d$--varifold $V$, and defining 
\[
m_K = \V ( K) \;\text{ and }\; P_K \in \argmin_{P \in \G} \int_{K \times \G} \left\| P - S \right\| \, dV(x,S) \: ,
\]
one obtains a volumetric approximation of $V$. Similarly one can construct a point cloud approximation of $V$. The possibility of switching between discrete volumetric varifolds and point cloud varifolds, up to a controlled error depending on the size of a given mesh, is shown in the following proposition.

\begin{prop} \label{prop_comparison_volumetric_point_cloud}
Let $\Omega \subset \R^n$ be an open set. Consider a mesh $\cK$ of $\Omega$ of size $\displaystyle \delta = \sup_{K \in \cK} \diam K $ and a family $\{x_{K}, m_K ,P_K \}_{K \in \cK} \subset \R^{n}\times \R_+ \times \G$ such that $x_K \in K$, for all $K \in \cK$. Define the volumetric varifold $V_\cK^{vol}$ and the point cloud varifold $V_\cK^{pt}$ as
\[
V_\cK^{vol} = \sum_{K \in \cK} \frac{m_K}{|K|} \cL^n_{| K} \otimes \delta_{P_K} \text{ and } V_\cK^{pt} = \sum_{K \in \cK} m_K \delta_{x_K} \otimes \delta_{P_K} \: .
\] 
Then, for any open set $U \subset \Omega$ one obtains 
\begin{equation*}
\Delta^{1,1}_{U} (V_\cK^{vol},V_\cK^{pt})  \leq \delta \min \left( \|V_\cK^{vol}\|(U^\delta) , \|V_\cK^{pt}\|(U^\delta) \right)
\end{equation*}
\end{prop}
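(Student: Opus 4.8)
The plan is to estimate $\Delta^{1,1}_U(V_\cK^{vol},V_\cK^{pt})$ directly from its definition: fix an arbitrary competitor $\phi\in\xLip_1(\Omega\times\G)$ with $\|\phi\|_{\infty}\le1$ and $\supp\phi\subset U\times\G$, bound $\big|\int\phi\,dV_\cK^{vol}-\int\phi\,dV_\cK^{pt}\big|$, and finally take the supremum over such $\phi$. One may clearly assume $\min\big(\|V_\cK^{vol}\|(U^\delta),\|V_\cK^{pt}\|(U^\delta)\big)<+\infty$, as otherwise the inequality is trivial; this makes all the sums appearing below absolutely convergent and legitimizes the manipulations. Using the normalization $|K|^{-1}\int_K d\cL^n=1$ one rewrites $m_K\phi(x_K,P_K)=\frac{m_K}{|K|}\int_K\phi(x_K,P_K)\,d\cL^n(x)$, so that
\[
\int\phi\,dV_\cK^{vol}-\int\phi\,dV_\cK^{pt}=\sum_{K\in\cK}\frac{m_K}{|K|}\int_K\big(\phi(x,P_K)-\phi(x_K,P_K)\big)\,d\cL^n(x).
\]

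Next I would apply the Lipschitz bound. Since $\phi$ is $1$--Lipschitz on $\Omega\times\G$ and the Grassmannian slot is frozen, the (standard product) metric gives $|\phi(x,P_K)-\phi(x_K,P_K)|\le|x-x_K|$, and $x,x_K\in K$ forces $|x-x_K|\le\diam K\le\delta$; hence each summand is at most $\delta\,m_K$ in modulus. Moreover the only cells contributing are those meeting $U$: if $\phi(x,P_K)\ne0$ for some $x\in K$, then $(x,P_K)\in\supp\phi\subset U\times\G$, so $x\in U$, while otherwise (using $x_K\in K$ as well) the whole $K$--summand vanishes. This yields
\[
\Big|\int\phi\,dV_\cK^{vol}-\int\phi\,dV_\cK^{pt}\Big|\le\delta\sum_{\substack{K\in\cK\\ K\cap U\ne\emptyset}}m_K.
\]

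To finish I would observe that any cell $K$ with $K\cap U\ne\emptyset$ satisfies $K\subset U^\delta$ (every point of $K$ is within $\diam K\le\delta$ of a point of $U$, and since $U$ is open this distance is actually $<\delta$). As the cells of $\cK$ are pairwise disjoint, the points $x_K$ are distinct and lie in $U^\delta$, so $\sum_{K\cap U\ne\emptyset}m_K=\sum_{K\cap U\ne\emptyset}\|V_\cK^{pt}\|(\{x_K\})\le\|V_\cK^{pt}\|(U^\delta)$; likewise $\sum_{K\cap U\ne\emptyset}m_K=\sum_{K\cap U\ne\emptyset}\|V_\cK^{vol}\|(K)\le\|V_\cK^{vol}\|(U^\delta)$. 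Taking the smaller of these two bounds and then the supremum over $\phi$ gives the assertion. There is no real obstacle here: the argument is a one-line Lipschitz estimate followed by bookkeeping, and the only points deserving a word of care are the reduction to finite mass (to handle the series), the fact that the chosen product metric on $\Omega\times\G$ restricts to the Euclidean distance on the first factor when the second is fixed, and the elementary implication $K\cap U\ne\emptyset\Rightarrow K\subset U^\delta$ together with its open-set subtlety about strict inequality.
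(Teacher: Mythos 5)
Your argument is correct and coincides with the paper's own proof: the same rewriting of the point-cloud term as a cell average, the same Lipschitz estimate $|\phi(x,P_K)-\phi(x_K,P_K)|\le |x-x_K|\le\delta$ on cells meeting $U$, and the same identification of $\sum_{K\cap U\ne\emptyset} m_K$ with the mass of $\bigcup_{K\cap U\ne\emptyset}K\subset U^\delta$ for both varifolds. Your extra remarks (finite-mass reduction, the frozen Grassmannian slot, the strict inequality in $K\subset U^\delta$) are just careful bookkeeping of steps the paper leaves implicit.
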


\begin{proof}
Let $\phi \in \xLip_1 (\R^n \times \G)$ such that $\supp \phi \subset U$, then
\begin{align*}
\Bigg| \int_{U \times \G} \phi \, dV_\cK^{vol}  &- \int_{U \times \G} \phi \, d V_\cK^{pt}  \Bigg| = \left| \sum_{ K \in \cK  } \frac{m_K}{|K|} \int_K \phi (x,P_K) \cL^n (x) -  \sum_{ K \in \cK } m_K \phi (x_K,P_K) \right| \\
& \leq \sum_{\substack{ K \in \cK \\ U \cap K \neq \emptyset } } m_K \fint_K \left| \phi (x,P_K) - \phi (x_K,P_K) \right| \, d \cL^n(x) \\
& \leq \sum_{\substack{ K \in \cK \\ U \cap K \neq \emptyset } } \fint_K \underbrace{\xlip(\phi)}_{ = 1} |x-x_K| \, d \cL^n(x) \leq  \delta \sum_{\substack{ K \in \cK \\ U \cap K \neq \emptyset } } m_K  \\
& = \delta \,  \|V_\cK^{vol}\| \left( \bigcup_{U \cap K \neq \emptyset} K \right) = \delta  \,  \|V_\cK^{pt}\| \left( \bigcup_{U \cap K \neq \emptyset} K \right)\\
& \le \delta\, \min \left( \|V_\cK^{vol}\|(U^\delta) , \|V_\cK^{pt}\|(U^\delta) \right) \: ,
\end{align*}
as wanted.
\end{proof}

\begin{remk}\label{remk:FVdiscrete}\rm 
We note that the first variation of a point cloud varifold is not a measure but only a distribution: indeed it is obtained by directional differentiation of a weighted sum of Dirac deltas. On the other hand, the first variation of a discrete volumetric varifold is bounded as soon as the cells in $\cK$ have a boundary with $\cH^{n-1}$ finite measure (or even as soon as the cells in $\cK$ have finite perimeter), but its total variation typically blows up as the size of the mesh goes to zero (see for instance Example~6 in \cite{Blanche-rectifiability}). Nevertheless, this bad behavior of the first variation, when applied to discrete varifolds, can be somehow controlled via regularization, as described in Section \ref{section:RFV}.
\end{remk}

Though the paper does not focus on polyhedral surfaces, we give a notion of polyhedral varifold which will be studied in Section~\ref{section:cotangent}, where we evidence the connection between the so--called \textit{Cotangent Formula} and the first variation of such a varifold.

\begin{dfn}[Polyhedral varifold] \label{dfn_polyhedralVarifold}
Let $M$ be a $d$--dimensional polyhedral surface whose set of $d$--faces is denoted $\cF$. Then, we associate $M$ with the $d$--varifold
\[
V = \sum_{F \in \cF} \cH^d_{| F} \otimes \delta_{P_F},
\]
where $P_F \in \G$ is the vector direction of the face $F$. Notice that $V$ is an integral $d$--varifold.
\end{dfn}

\section{Regularized First Variation}
\label{section:RFV}

Given a sequence of varifolds $(V_i)_i$ weakly--$\ast$ converging to a varifold $V$, a sufficient condition for $V$ to have locally bounded first variation, i.e. for $\delta V$ to be a Radon measure, is
\begin{equation} \label{allard_condition}
\sup_i \| \delta V_i \| < + \infty \: .
\end{equation}
However, the typical sequences of discrete varifolds that have been introduced in Section \ref{subsection:DV} may not have uniformly bounded first variations, or it may even happen that the first variations themselves are not measures, as in the case of point cloud varifolds (see Remark \ref{remk:FVdiscrete}). Nevertheless, $\delta V_i$ are distributions of order $1$ converging to $\delta V$ (in the sense of distributions). The idea is to compose the first variation operator $\delta$ with convolutions defined by a sequence of regularizing kernels $(\rho_{\e_{i}})_{i\in \N}$ as in Definition \ref{dfn:RegFirstVar} below, and then to require a uniform control on the $L^{1}$-norm of $\delta V_{i}\ast \rho_{\e_{i}}$. 

We also point out that the parameter $\e_{i}$ may be viewed as a ``scale parameter''. 

Besides some technical results, that will be used in the next sections, we prove in Theorem \ref{bounded_first_variation_condition_for_sequences} a compactness and rectifiability result, which relies on the assumption that $\delta V_{i}\ast \rho_{\e_{i}}$ is uniformly bounded in $L^{1}$.

As we are going to regularize the first variation of a varifold $V$ in $\Omega$ by convolution, we conveniently extend $\delta V$ to a linear and continuous form on $\xC^{1}_{c}(\R^{n},\R^{n})$. Let $\Omega \subset \R^n$ be an open set and $V$ be a $d$--varifold in $\Omega$ with mass $\V(\Omega)<+\infty$. First of all, we notice that $(x,S) \mapsto \Div_S X(x)$ is continuous and bounded, and that $V$ is a finite Radon measure, thus we set
\begin{equation}\label{extendedfv}
\delta V(X) = \int_{\Omega\times \G} \Div_S X(x) \, dV(x,S),\qquad \forall X\in \xC^{1}_{c}(\R^{n},\R^{n}).
\end{equation}
For more simplicity, in \eqref{extendedfv} the extended first variation is denoted as the standard first variation. We immediately obtain
\[
\delta V(X) \leq \|X\|_{\xC^1}\, \V (\Omega)\,, \qquad \forall X\in \xC^{1}_{c}(\R^{n},\R^{n})\,, 
\]
which means that the linear extension is continuous with respect to the $\xC^{1}$-norm. Notice that the extended first variation coincides with the standard first variation whenever $X\in \xC^{1}_{c}(\Omega,\R^{n})$ but contains additional boundary information if the support of $\V$ is not relatively compact in $\Omega$.

For the reader's convenience we recall from Section \ref{section:pre} that $\rho$ denotes a non-negative kernel profile, such that $\rho_{1}(x) = \rho(|x|)$ is of class $C^{1}$, has compact support in $B_{1}(0)$, and satisfies $\int \rho_{1}(x)\, dx = 1$. Then, given $\e>0$ we set $\rho_{\e}(x) = \e^{-n}\rho_{1}(x/\e)$.

\begin{dfn}[regularized first variation]\label{dfn:RegFirstVar} 
Given a vector field $X \in \xC_c^1 (\R^n , \R^n )$, for any $\e>0$ we define
\begin{equation}\label{def:regfirstvar}
\delta V \ast \rho_\epsilon (X) := \delta V (X \ast \rho_\epsilon) = \int_{\Omega \times \G}\xdiv_S (X \ast \rho_\epsilon) (y)\, dV(y,S) \: .
\end{equation}
We generically say that $\delta V \ast \rho_\epsilon$ is a \emph{regularized first variation} of $V$. 
\end{dfn}
Of course \eqref{def:regfirstvar} defines $\delta V \ast \rho_\epsilon$ in the sense of distributions. The following, elementary proposition shows that $\delta V \ast \rho_\epsilon$ is actually represented by a smooth vector field with bounded $L^{1}$-norm.
\begin{prop}\label{expression_of_delta_V^epsilon}
Let $\Omega \subset \R^n$ be an open set and $V$ be a $d$--varifold in $\Omega$ with finite mass $\V(\Omega)$. Then $\delta V \ast \rho_\epsilon$ is represented by the continuous vector field
\begin{equation} \label{convolution_general_kernel}
\delta V \ast \rho_\epsilon (x) =  \int_{\Omega \times \G} \nabla^S \rho_\epsilon (y-x) \, dV(y,S) = \frac{1}{\epsilon^{n+1}}\int_{\Omega \times \G} \nabla^S \rho_{1} \left(\frac{y-x}{\epsilon}\right) \, dV(y,S)
\end{equation}
and moreover one has $\delta V \ast \rho_\epsilon \in \xL^1(\R^n;\R^{n})$.
\end{prop}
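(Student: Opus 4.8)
The plan is to unfold the definition of $\delta V\ast\rho_\e(X)$, push the convolution onto the kernel, and then integrate by parts in $x$ (or rather, move the derivative from $X$ to $\rho_\e$) so as to exhibit the explicit vector field on the right-hand side of \eqref{convolution_general_kernel}; finally I check the $L^1$ bound by Fubini and the fact that $\rho_1\in C^1_c$.

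First I would start from
\[
\delta V\ast\rho_\e(X)=\int_{\Omega\times\G}\xdiv_S(X\ast\rho_\e)(y)\,dV(y,S)
=\sum_{j=1}^n\int_{\Omega\times\G}\langle\nabla^S (X_j\ast\rho_\e)(y),e_j\rangle\,dV(y,S)\,.
\]
Since $\rho_\e\in C^1_c(\R^n)$ and each $X_j\in C^1_c(\R^n)$, the convolution $X_j\ast\rho_\e$ is $C^1$ and $\nabla(X_j\ast\rho_\e)(y)=\int X_j(x)\nabla_y\rho_\e(y-x)\,dx$. Substituting and using that $\nabla^S$ acts linearly and commutes with the $x$-integral (the integrand is dominated by an $L^1$ function, $X_j$ being compactly supported and $\nabla\rho_\e$ bounded), I get
\[
\delta V\ast\rho_\e(X)=\int_{\R^n}\left(\int_{\Omega\times\G}\langle\nabla^S\rho_\e(y-x),X(x)\rangle\,dV(y,S)\right)dx\,,
\]
where I have collected the $n$ components back into the vector field $x\mapsto W_\e(x):=\int_{\Omega\times\G}\nabla^S\rho_\e(y-x)\,dV(y,S)$ (here one uses that $\nabla^S$ is symmetric, so $\langle\nabla^S\rho_\e(y-x),X(x)\rangle$ pairs correctly component-by-component). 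This shows $\delta V\ast\rho_\e(X)=\int_{\R^n}W_\e(x)\cdot X(x)\,dx$ for every $X\in C^1_c(\R^n,\R^n)$, i.e. $\delta V\ast\rho_\e$ is represented by $W_\e$. The scaling identity $\rho_\e(z)=\e^{-n}\rho_1(z/\e)$ gives $\nabla\rho_\e(z)=\e^{-n-1}\nabla\rho_1(z/\e)$, hence the factor $\e^{-(n+1)}$ in the second expression of \eqref{convolution_general_kernel}.

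For continuity of $W_\e$: $\nabla^S\rho_\e$ is continuous and bounded on $\R^n$ uniformly (as $\rho_1\in C^1_c$), and $\V(\Omega)<+\infty$, so dominated convergence gives continuity of $x\mapsto W_\e(x)$. For the $L^1$ bound, I estimate
\[
\int_{\R^n}|W_\e(x)|\,dx\le\int_{\R^n}\int_{\Omega\times\G}|\nabla^S\rho_\e(y-x)|\,dV(y,S)\,dx
\le\int_{\Omega\times\G}\left(\int_{\R^n}|\nabla\rho_\e(y-x)|\,dx\right)dV(y,S)\,,
\]
using $|\nabla^S\rho_\e|\le|\nabla\rho_\e|$ (orthogonal projections are $1$-Lipschitz) and Tonelli's theorem. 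The inner integral equals $\|\nabla\rho_\e\|_{L^1}=\e^{-1}\|\nabla\rho_1\|_{L^1}$, a finite constant independent of $y,S$, so the whole thing is bounded by $\e^{-1}\|\nabla\rho_1\|_{L^1}\,\V(\Omega)<+\infty$, proving $W_\e=\delta V\ast\rho_\e\in L^1(\R^n;\R^n)$.

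I do not expect a serious obstacle here; the only points requiring a little care are (i) justifying the interchange of the $x$-integral with $\nabla^S$ and with the $V$-integral — both handled by the compact support of $X$ and the boundedness of $\nabla\rho_\e$, together with $\V(\Omega)<+\infty$ — and (ii) keeping track of which argument the gradient in $\nabla^S\rho_\e(y-x)$ differentiates with respect to, so that the sign and the placement of $x$ versus $y$ come out as in \eqref{convolution_general_kernel}. The symmetry of the matrix $\nabla^S$ (projection onto the $d$-plane $S$) is what lets the component-wise pairing $\sum_j\langle\nabla^S(X_j\ast\rho_\e),e_j\rangle$ be rewritten as $\int W_\e\cdot X$ cleanly.
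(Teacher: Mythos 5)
Your proposal is correct and follows essentially the same route as the paper's proof: unfold the definition, use that the tangential divergence of $X\ast\rho_\e$ equals the convolution of $X$ with $\nabla^S\rho_\e$, and apply Fubini--Tonelli to read off the representing vector field. You simply spell out a few details the paper leaves implicit (the continuity of the representing field and the explicit $L^1$ bound $\e^{-1}\|\nabla\rho_1\|_{L^1}\,\|V\|(\Omega)$), and the remark about symmetry of $\Pi_S$ is harmless but not actually needed, since $\sum_j X_j\langle v,e_j\rangle=\langle v,X\rangle$ directly.
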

\begin{proof}
Taking into account \eqref{def:regfirstvar}, for every $y\in \R^{n}$ we find $\Div_S (X \ast \rho_\epsilon)(y) = X \ast \nabla^S \rho_\epsilon(y) := \sum_{i=1}^{n} X_{i}\ast \partial^{S}_{i}\rho_{\e}(y)$, thus by Fubini-Tonelli's theorem we get
\begin{align*}
 \delta V \ast \rho_\epsilon ( X ) & = \int_{\Omega \times \G} (X \ast \nabla^S \rho_\epsilon)(y) \, dV(y,S)\\
& = \int_{\Omega \times \G} \int_{x \in \R^n} X(x)\cdot \nabla^S \rho_\epsilon(y-x) \, d \cL^n(x) \, dV(y,S) \\
& = \int_{x \in\R^n} X(x) \cdot \left( \int_{\Omega \times \G}   \nabla^S \rho_\epsilon(y-x) \, dV(y,S) \right) \, d \cL^n (x) \: ,
\end{align*}
which proves \eqref{convolution_general_kernel}. The fact that $\delta V \ast \rho_\epsilon \in \xL^1(\R^n;\R^{n})$ is an immediate consequence of the fact that $\nabla\rho_{\e}$ is bounded on $\R^{n}$.
\end{proof}

\begin{remk}\rm 
We stress that $\delta V \ast \rho_\epsilon$ is in $L^{1}(\R^{n})$ even when $\delta V$ is not locally bounded.
\end{remk}

\begin{remk}\rm 
If the support of $\|V\|$ is compactly contained in $\Omega$ then using the extended or the standard first variation in the convolution $\delta V\ast \rho_{\e}$ is equivalent up to choosing $\e$ small enough. In general, the same equivalence holds up to restricting the distribution $\delta V\ast \rho_{\e}$ to $\xC^{1}_{c}(\Omega_{\e},\R^{n})$, where $\Omega_{\e} = \{x\in \Omega:\ \dist(x,\partial \Omega)>\e\}$, which amounts to restricting the function $\delta V\ast \rho_{\e}$ to $\Omega_{\e}$. 
\end{remk}

In the next proposition we show that the classical first variation of a varifold $V$ is the weak--$*$ limit of regularized first variations of $V$, under the assumption that $\delta V$ is a bounded measure. This will immediately follow from the basic estimate \eqref{fvweakstar1}, which is true for all varifolds.
\begin{prop} \label{convergence_regularized_first_variation}
Let $\Omega \subset \R^n$ be an open set and let $V$ be a varifold in $\Omega$ with $\V (\Omega) < + \infty$. Then for any $X \in \xC_c^1 (\R^n, \R^n)$ we have
\begin{equation}\label{fvweakstar1}
\left| \delta V \ast \rho_\epsilon (X) - \delta V (X) \right| \leq \V \Big(\Omega \cap \big(\supp X + B_\epsilon(0)\big) \Big) \left\| \rho_\epsilon \ast X - X \right\|_{\xC^1} \xrightarrow[\epsilon \to 0]{} 0 \: .
\end{equation}
Moreover, if $V$ has bounded extended first variation then
\begin{equation}\label{fvweakstar2}
\delta V \ast \rho_\epsilon \xrightharpoonup[\epsilon \to 0]{\: \ast \:} \delta V \: .
\end{equation}
\end{prop}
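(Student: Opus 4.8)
The plan is to derive both assertions from the defining identity $\delta V\ast\rho_\epsilon(X)=\delta V(X\ast\rho_\epsilon)$ of Definition~\ref{dfn:RegFirstVar}, together with linearity of the extended first variation on $C^{1}_{c}(\R^{n},\R^{n})$, which gives
\[
\delta V\ast\rho_\epsilon(X)-\delta V(X)=\delta V\big(X\ast\rho_\epsilon-X\big)\,.
\]
To prove \eqref{fvweakstar1} I would estimate $|\delta V(Y)|$ for $Y:=X\ast\rho_\epsilon-X$ exactly as in the bound $\delta V(X)\le\|X\|_{C^{1}}\,\V(\Omega)$ already displayed above, but localizing: since $\Div_S Y(x)=0$ whenever $\nabla Y(x)=0$, and $\supp(\nabla Y)\subset\supp Y\subset\supp X+\overline{B}_\epsilon(0)$, only the portion of $\V$ lying in $\supp X+B_\epsilon(0)$ contributes, so that $|\delta V(Y)|\le\|Y\|_{C^{1}}\,\V\big(\Omega\cap(\supp X+B_\epsilon(0))\big)$. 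It then remains to recall the two standard facts that $\|X\ast\rho_\epsilon-X\|_{C^{1}}\to0$ as $\epsilon\to0$ — this uses $\nabla(X\ast\rho_\epsilon)=(\nabla X)\ast\rho_\epsilon$ together with the uniform continuity of $X$ and $\nabla X$, both compactly supported — while $\V(\Omega\cap(\supp X+B_\epsilon(0)))\le\V(\Omega)<+\infty$; multiplying, the right-hand side of \eqref{fvweakstar1} tends to $0$.

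For \eqref{fvweakstar2}, \eqref{fvweakstar1} already yields $\delta V\ast\rho_\epsilon(X)\to\delta V(X)$ for every $X\in C^{1}_{c}(\R^{n},\R^{n})$, i.e. convergence in the sense of distributions; the point is to upgrade this to weak-$\ast$ convergence of $\R^{n}$-valued Radon measures, namely convergence of $\delta V\ast\rho_\epsilon(X)$ for every $X\in C^{0}_{c}(\R^{n},\R^{n})$. I would obtain this by a density argument based on a \emph{uniform} $L^{1}$ bound. The key observation is that, once $\delta V$ is a finite Radon measure, the distribution $\delta V\ast\rho_\epsilon$ is nothing but the ordinary convolution $\rho_\epsilon\ast\delta V$ of that measure with the kernel: this follows from Proposition~\ref{expression_of_delta_V^epsilon} (or from $\delta V(X\ast\rho_\epsilon)=\int(X\ast\rho_\epsilon)\cdot d(\delta V)$) by Fubini's theorem, using that $\rho_\epsilon$ is even and $|\delta V|$ is finite. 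Hence $\|\delta V\ast\rho_\epsilon\|_{L^{1}(\R^{n})}\le\|\rho_\epsilon\|_{L^{1}}\,|\delta V|(\R^{n})=|\delta V|(\R^{n})$, uniformly in $\epsilon$. Given $X\in C^{0}_{c}(\R^{n},\R^{n})$ and $\eta>0$, choose $\widetilde X\in C^{1}_{c}(\R^{n},\R^{n})$ with $\|X-\widetilde X\|_{\infty}<\eta$ and split
\[
|\delta V\ast\rho_\epsilon(X)-\delta V(X)|\le|\delta V\ast\rho_\epsilon(X-\widetilde X)|+|\delta V\ast\rho_\epsilon(\widetilde X)-\delta V(\widetilde X)|+|\delta V(\widetilde X-X)|\,;
\]
the first and third terms are bounded by $\eta\,|\delta V|(\R^{n})$ (using the uniform $L^{1}$ bound and finiteness of $|\delta V|$), while the middle term vanishes as $\epsilon\to0$ by \eqref{fvweakstar1}, so $\limsup_{\epsilon\to0}|\delta V\ast\rho_\epsilon(X)-\delta V(X)|\le2\eta\,|\delta V|(\R^{n})$ and \eqref{fvweakstar2} follows by letting $\eta\to0$.

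The computations here are routine; the one genuinely substantive step is the passage from distributional to measure-theoretic weak-$\ast$ convergence, which forces one to produce the equi-bound $\sup_\epsilon\|\delta V\ast\rho_\epsilon\|_{L^{1}}<+\infty$. I expect this to be the only real obstacle, and it is overcome precisely by recognizing that $\delta V\ast\rho_\epsilon=\rho_\epsilon\ast\delta V$ when $\delta V$ is a measure — note that the naive bound $\|\delta V\ast\rho_\epsilon\|_{L^{1}}\le\|\nabla\rho_\epsilon\|_{L^{1}}\,\V(\Omega)$ read off directly from \eqref{convolution_general_kernel} blows up like $\epsilon^{-1}$ and is useless here. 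A minor bookkeeping point is to fix a convention for $\|\cdot\|_{C^{1}}$ under which $|\Div_S Y|\le\|Y\|_{C^{1}}$, consistently with the estimate $\delta V(X)\le\|X\|_{C^{1}}\V(\Omega)$ recalled above.
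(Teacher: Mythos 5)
Your argument is correct, and for the first claim it coincides with the paper's proof: both reduce to the identity $\delta V\ast\rho_\epsilon(X)-\delta V(X)=\delta V(\rho_\epsilon\ast X-X)$ and the bound by $\V\cdot\|\rho_\epsilon\ast X-X\|_{\xC^1}$ (you are in fact slightly more careful than the paper, which in the proof simply writes $\V(\Omega)$ and does not spell out the localization to $\supp X+B_\epsilon(0)$; your localization works, though to land in the \emph{open} enlargement you should argue via $\{\nabla Y\neq 0\}\subset(\supp\nabla X+B_\epsilon(0))\cup\supp X$ rather than via $\supp Y\subset\supp X+\overline B_\epsilon(0)$ — a cosmetic point, since either enlargement has finite $\V$-measure). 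For the second claim your route differs from the paper's: you prove distributional convergence on $\xC^1_c$ first and then upgrade to $\xC^0_c$ by a density/$3\eta$ argument, whose engine is the uniform bound $\sup_\epsilon\|\delta V\ast\rho_\epsilon\|_{\xL^1}\le|\delta V|(\R^n)$ obtained from the identity $\delta V\ast\rho_\epsilon=\rho_\epsilon\ast\delta V$ when $\delta V$ is a finite measure. The paper avoids the density step altogether: once $\delta V$ is a bounded measure, the pairing $\delta V(\rho_\epsilon\ast X)$ makes sense for every $X\in\xC^0_c(\R^n,\R^n)$, so it estimates directly $|\delta V\ast\rho_\epsilon(X)-\delta V(X)|=|\delta V(\rho_\epsilon\ast X-X)|\le\|\delta V\|\,\|\rho_\epsilon\ast X-X\|_\infty\to 0$, using uniform convergence of mollifications of a continuous compactly supported field. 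Both arguments rest on the same underlying fact — that the mollification can be thrown onto the measure $\delta V$ — but the paper's version is a one-line estimate on $\xC^0_c$ test fields, whereas yours buys, as a by-product, the equi-integrability-type bound $\|\delta V\ast\rho_\epsilon\|_{\xL^1}\le|\delta V|(\R^n)$, which is a genuinely useful extra piece of information (and your remark that the naive bound from \eqref{convolution_general_kernel} blows up like $\epsilon^{-1}$ is well taken), but is not needed for \eqref{fvweakstar2} itself.
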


\begin{proof}
Let $X \in \xC_c^1 (\R^n, \R^n)$. Since $\delta V \ast \rho_\epsilon (X) =  \delta V (\rho_\epsilon \ast X)$ we obtain
\[
\left| \delta V \ast \rho_\epsilon (X) - \delta V (X) \right| = \left| \delta V(\rho_\epsilon \ast X - X ) \right| \leq \V (\Omega) \left\| \rho_\epsilon \ast X - X \right\|_{\xC^1} \: .
\]
On observing that $\left\| \rho_\epsilon \ast X - X \right\|_{\xC^1} \xrightarrow[\epsilon \to 0]{} 0$ we get \eqref{fvweakstar1}. If in addition $V$ has bounded extended first variation, then for all $X \in \xC_c^0 (\R^n, \R^n)$ we obtain
\[
\left| \delta V \ast \rho_\epsilon (X) - \delta V (X) \right| \leq \| \delta V \| \left\| \rho_\epsilon \ast X - X \right\|_{\infty} \xrightarrow[\epsilon \to 0]{} 0 \: ,
\]
which proves \eqref{fvweakstar2}.
\end{proof}

The next theorem is a partial generalization of Allard's compactness theorem for rectifiable varifolds. It shows that, given an infinitesimal sequence $(\e_i)_i$ of positive numbers and a sequence of $d$-varifolds $(V_i)_i$ with uniformly bounded total masses, such that $\delta V_i\ast \rho_{\e_i}$ satisfies a uniform boundedness assumption, there exists a subsequence of $V_i$ that weakly-$\ast$ converges to a limit varifold $V$ with bounded first variation. If in addition $\|V_i\|(B_{r}(x))\ge \theta_{0}r^{d}$ for $\V$-almost every $x$ and for $\beta_{i}\le r\le r_{0}$, with $(\beta_{i})_{i\in \N}$ an infinitesimal sequence, then the limit varifold $V$ is rectifiable. We stress that $V_i$ is required neither to have bounded first variation, nor to be rectifiable. Notice also the appearance of the scale parameters $\beta_{i}$ providing infinitesimal lower bounds on the radii to be used for approximate density estimates.
\begin{theo}[compactness and rectifiability] \label{bounded_first_variation_condition_for_sequences}
Let $\Omega \subset \R^n$ be an open set and $(V_i)_i$ be a sequence of $d$-varifolds. Assume that there exists a positive, decreasing and infinitesimal sequence $(\epsilon_i)_i$, such that
\begin{equation} \label{dynamical_condition_bounded_first_variation}
M := \sup_{i\in \N} \left\lbrace \|V_i\|(\Omega) +  \|\delta V_i \ast \rho_{\e_i}\|_{\xL^1} \right\rbrace < +\infty \: .
\end{equation}
Then there exists a subsequence $(V_{\phi(i)})_i$ weakly--$\ast$ converging in $\Omega$ to a $d$-varifold $V$ with bounded first variation, such that $\V(\Omega) + | \delta V |(\Omega) \le M$. Moreover, if we further assume the existence of an infinitesimal sequence $\beta_i \downarrow 0$ and $\theta_0,r_0>0$ such that, for any $\beta_i<r<r_0$ and for $\|V_i\|$-almost every $x\in \Omega$, 
\begin{equation}\label{estbelow}
\|V_i\|(B_r(x)) \ge \theta_0 r^d\: ,
\end{equation}
then $V$ is rectifiable.
\end{theo}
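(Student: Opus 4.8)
The plan is to prove the two assertions in turn: for the first I would combine weak-$*$ compactness of Radon measures with a splitting of $\delta V_i$ into its regularization — uniformly controlled in $\xL^1$ by \eqref{dynamical_condition_bounded_first_variation} — and a remainder controlled by the total mass times a mollification error; for the second I would transfer the lower density bound \eqref{estbelow} to the limit varifold and invoke Allard's rectifiability theorem.

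For the compactness part: since $\|V_i\|(\Omega)\le M$ and $\G$ is compact, $(V_i)_i$ is a sequence of Radon measures on $\Omega\times\G$ with uniformly bounded total mass, so I would first extract a subsequence $(V_{\phi(i)})_i$ converging weakly-$*$ to a $d$-varifold $V$, noting that lower semicontinuity of the mass on the open set $\Omega$ gives $\|V\|(\Omega)\le m:=\liminf_i\|V_{\phi(i)}\|(\Omega)\le M$. To bound $\delta V$, I would fix $X\in\xC^1_c(\Omega,\R^n)$ with $\|X\|_\infty\le1$; since $(x,S)\mapsto\Div_S X(x)$ is continuous and supported in $(\supp X)\times\G\subset\subset\Omega\times\G$, weak-$*$ convergence gives $\delta V(X)=\lim_i\delta V_{\phi(i)}(X)$, and I would split
\[
\delta V_{\phi(i)}(X)=\delta V_{\phi(i)}\ast\rho_{\e_{\phi(i)}}(X)+\delta V_{\phi(i)}\bigl(X-X\ast\rho_{\e_{\phi(i)}}\bigr)\,.
\]
By Proposition~\ref{expression_of_delta_V^epsilon} the modulus of the first summand is at most $\|X\|_\infty\,\|\delta V_{\phi(i)}\ast\rho_{\e_{\phi(i)}}\|_{\xL^1}\le M-\|V_{\phi(i)}\|(\Omega)$, and by the estimate recalled just after \eqref{extendedfv} the second is at most $\|V_{\phi(i)}\|(\Omega)\,\|X-X\ast\rho_{\e_{\phi(i)}}\|_{\xC^1}$, with $\|X-X\ast\rho_{\e_{\phi(i)}}\|_{\xC^1}\to0$ as $\e_{\phi(i)}\to0$ (standard mollification of a $\xC^1_c$ field, cf.\ the proof of Proposition~\ref{convergence_regularized_first_variation}). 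Hence $|\delta V(X)|=\lim_i|\delta V_{\phi(i)}(X)|\le M-m$, and by homogeneity $|\delta V(X)|\le(M-m)\|X\|_\infty$ for all $X\in\xC^1_c(\Omega,\R^n)$; so $\delta V$ extends to a bounded linear functional on $\xC^0_c(\Omega,\R^n)$ and, by Riesz representation, is a finite $\R^n$-valued Radon measure with $|\delta V|(\Omega)\le M-m$, which together with $\|V\|(\Omega)\le m$ yields $\|V\|(\Omega)+|\delta V|(\Omega)\le M$.

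For the rectifiability part, I would add hypothesis \eqref{estbelow} and first upgrade it to a bound at every point of the support: since $\|V_i\|(\overline{B_r(x)})\ge\|V_i\|(B_r(x))\ge\theta_0 r^d$ for $\|V_i\|$-a.e.\ $x$ and $\beta_i<r<r_0$, and $x\mapsto\|V_i\|(\overline{B_r(x)})$ is upper semicontinuous while the ``good'' set has full $\|V_i\|$-measure (hence is dense in $\supp\|V_i\|$), the same inequality holds for \emph{every} $x\in\supp\|V_i\|$. Then I would fix $x_0\in\supp\|V\|$ and $0<\rho<\min(r_0,\dist(x_0,\partial\Omega))$, use that $V_{\phi(i)}\rightharpoonup V$ (testing against functions of $x$ only) implies $\|V_{\phi(i)}\|\rightharpoonup\|V\|$ to produce $x_i\in\supp\|V_{\phi(i)}\|$ with $x_i\to x_0$, and note that for $\sigma>0$ small and $i$ large one has $\overline{B_\rho(x_i)}\subset\overline{B_{\rho+\sigma}(x_0)}$ and $\beta_{\phi(i)}<\rho$, hence $\|V_{\phi(i)}\|(\overline{B_{\rho+\sigma}(x_0)})\ge\theta_0\rho^d$. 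Letting $i\to\infty$ through the compact-set inequality $\limsup_i\|V_{\phi(i)}\|(K)\le\|V\|(K)$ with $K=\overline{B_{\rho+\sigma}(x_0)}\subset\subset\Omega$, and then $\sigma\downarrow0$, I would obtain $\|V\|(\overline{B_\rho(x_0)})\ge\theta_0\rho^d$ for all small $\rho>0$ (whence also $\|V\|(B_\rho(x_0))\ge\theta_0\rho^d$ by monotonicity), so that
\[
\Theta^{*d}(\|V\|,x_0):=\limsup_{\rho\to0^+}\frac{\|V\|(B_\rho(x_0))}{\omega_d\,\rho^d}\ \ge\ \frac{\theta_0}{\omega_d}\ >\ 0
\]
for every $x_0\in\supp\|V\|$, in particular for $\|V\|$-almost every $x$. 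Since $V$ has bounded first variation by the previous step, Allard's rectifiability theorem (\cite{Allard72}, see also \cite{simon}) then gives that $V$ is rectifiable.

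The main obstacle I anticipate is precisely this density transfer: \eqref{estbelow} holds only $\|V_i\|$-a.e., i.e.\ on an $i$-dependent set, and the natural semicontinuity of $x\mapsto\|V_i\|(B_r(x))$ points the wrong way; this is why one is forced to work with closed balls (upper semicontinuity of $x\mapsto\|V_i\|(\overline{B_r(x)})$, and the compact-set inequality $\limsup_i\|V_{\phi(i)}\|(K)\le\|V\|(K)$) together with support-point approximation before a pointwise lower density estimate for the limit can be salvaged. Everything else — the weak-$*$ compactness of measures, the $\xL^1$-control of $\delta V_i\ast\rho_{\e_i}$ provided by \eqref{dynamical_condition_bounded_first_variation}, the $\xC^1$-continuity of mollification, and the final invocation of Allard's theorem — should be routine.
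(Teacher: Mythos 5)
Your proposal is correct and follows essentially the same route as the paper: extract a weak-$*$ convergent subsequence from the uniform mass bound, use the mollification estimate of Proposition~\ref{convergence_regularized_first_variation} (your splitting of $\delta V_{\phi(i)}(X)$ is the same estimate written out) to bound $|\delta V(X)|$ by $\sup_i\|\delta V_i\ast\rho_{\e_i}\|_{\xL^1}\,\|X\|_\infty$ and conclude via Riesz, then transfer the lower density bound \eqref{estbelow} to the limit and invoke Allard's rectifiability theorem. The only difference is that the paper dispatches the density-transfer step with a citation (Proposition~3.3 in \cite{Blanche-rectifiability}), whereas you carry it out in full --- correctly, via closed balls, upper semicontinuity, support-point approximation and the compact-set inequality for weak-$*$ limits.
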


\begin{proof}
Since $M$ is finite, there exists a subsequence $(V_{\phi(i)})_i$ weakly--$\ast$ converging in $\Omega$ to a varifold $V$. By Proposition \ref{convergence_regularized_first_variation}, for any $X \in \xC_c^1 (\Omega, \R^n)$ we obtain
\begin{align*}
\left| \delta V_{ \phi(i) } \ast \rho_{\epsilon_{ \phi(i) }} (X) - \delta V (X) \right| & \leq \left| \delta V_{ \phi(i) } \ast \rho_{\epsilon_{ \phi(i) }} (X) - \delta V_{ \phi(i) } (X) \right| + \left| \delta V_{ \phi(i) } (X) - \delta V (X) \right| \\
& \leq \underbrace{\|V_i\| (\Omega)}_{\leq C < +\infty} \, \left\| X \ast \rho_{\epsilon_{ \phi(i) }} - X \right\|_{\xC^1} + \left| \delta V_{ \phi(i) } (X) - \delta V (X) \right| \\
& \xrightarrow[i \to \infty]{} 0 \: . \\
\end{align*}
Consequently, for any $X \in \xC_c^1 (\Omega , \R^n)$ one has $\displaystyle \left| \delta V (X) \right| \leq \sup_i \left\| \delta V_i \ast \rho_{\epsilon_i} \right\|_{\xL^1} \, \| X \|_{\infty}$. We conclude that $\delta V$ extends into a continuous linear form in $\xC_c^0 (\Omega , \R^n)$ whose norm is bounded by $ \sup_i \left\| \delta V_i \ast \rho_{\epsilon_i}  \right\|_{\xL^1}$, thus $\V(\Omega) + | \delta V |(\Omega) \le M$.

Assuming the additional hypothesis \eqref{estbelow}, it is not difficult to pass to the limit and prove the same inequality for $\V$--a.e. $x$ and for all $0<r<r_0$. We refer to Proposition $3.3$ in \cite{Blanche-rectifiability} for more details on this point. By Theorem 5.5(1) in \cite{Allard72} we obtain the last part of the claim.
\end{proof}

\section{Approximate Mean Curvature}\label{amc}
\label{section:AMC}

\subsection{Definition and convergence}
We now introduce the notion of \textit{approximate mean curvature} associated with $V$, in a consistent way with the notion of regularized first variation. We refer to Section \ref{section:pre} for the notations and the basic assumptions on the kernel profiles $\rho,\xi$. We also set
\begin{equation} \label{eqCrhoCxi}
C_\rho = d\, \omega_{d} \int_0^1 \rho (r) r^{d-1} \, dr\,, \qquad C_\xi = d\, \omega_{d} \int_0^1 \xi (r) r^{d-1} \, dr \: .
\end{equation}

\begin{dfn}[approximate mean curvature]\label{dfn_regularizedMeanCurvature}
Let $\Omega \subset \R^n$ be an open set and let $V$ be a $d$--varifold in $\Omega$. For every $\e>0$ and $x\in \Omega$, such that $\V\ast \xi_{\e}(x)>0$, we define
\begin{equation}\label{HVgen}
H^{V}_{\rho,\xi,\e}(x) = - \frac{C_{\xi}}{C_{\rho}}\, \frac{\delta V\ast \rho_{\e}(x)}{\V \ast \xi_{\e}(x)}\,,
\end{equation}
where $C_{\rho}$ and $C_{\xi}$ are as in \eqref{eqCrhoCxi}. We generically say that the vector $H^{V}_{\rho,\xi,\e}(x)$ is an \emph{approximate mean curvature} of $V$ at $x$.
\end{dfn}

\begin{xmpl}[approximate mean curvature of a point cloud varifold]\rm 
\label{ex:pointcloud}
Let us consider a point cloud varifold $V = \sum_{j=1}^N m_j \delta_{x_j} \otimes \delta_{P_j}$. We remark that $\delta V$ is not a measure. An approximate mean curvature of $V$ is given by the formula
\begin{equation} \label{eq_pointCloudMC}
H_{\rho,\xi,\e}^{V} (x)= - \frac{\delta V \ast \rho_\epsilon (x)}{\V \ast \xi_\epsilon (x)} = \frac{1}{\epsilon} \frac{\displaystyle \sum_{x_j \in B_\epsilon(x)\setminus \{x\}} m_j\, \rho^\prime \left(\frac{|x_j -x|}{\epsilon}\right) \Pi_{P_j}\frac{x_j-x}{|x_j-x|}}{\displaystyle \sum_{x_j \in B_\epsilon(x)} m_j\, \xi \left(\frac{|x_j -x|}{\epsilon}\right)} \: .
\end{equation}
The formula is well-defined for instance when $x=x_{i}$ for some $i=1,\dots,N$.
The choice of $\epsilon$ here is crucial: it must be large enough to guarantee that the ball $B_{\e}(x)$ contains points of the cloud different from $x$, but not too large to avoid over-smoothing. 
\end{xmpl}

If $\delta V$ is locally bounded then we recall the Radon-Nikodym-Lebesgue decomposition \eqref{RNLdecomp}, which says that $\delta V = -H\V + \delta V_{s}$, where $H = H(x)$ is the generalized mean curvature of $V$.
Note that the approximate mean curvature introduced in Definition~\ref{dfn_regularizedMeanCurvature} can be equivalently defined as the Radon--Nikodym derivative of the regularized first variation with respect to the regularized mass of $V$.
When $V$ is rectifiable, it turns out that formula \eqref{HVgen} gives a pointwise $\V$--almost everywhere approximation of $H(x)$, as proved by the following result. 
\begin{theo}[Approximation I]
\label{thm:convergence1}
Let $\Omega \subset \R^n$ be an open set and let $V = v(M,\theta)$ be a rectifiable $d$--varifold with locally bounded first variation in $\Omega$. Then for $\V$--almost all $x \in \Omega$ we have 
\begin{equation} \label{eq:PointwiseCVfixedV}
 H^V_{\rho,\xi,\epsilon} (x) \xrightarrow[\epsilon \to 0]{} H(x) \,.
\end{equation}
\end{theo}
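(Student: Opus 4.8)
The strategy is to use the local boundedness of $\delta V$ through the Radon--Nikodym--Lebesgue decomposition \eqref{RNLdecomp}, splitting the numerator of \eqref{HVgen} into an absolutely continuous part and a singular part, and then to show that at a $\V$--generic point $x$ the three ``regularized'' quantities $\V\ast\xi_\e(x)$, $\V\ast\rho_\e(x)$ and $(H\V)\ast\rho_\e(x)$ all scale like $\e^{d-n}$ times an explicit constant, while the singular contribution is of order $o(\e^{d-n})$, hence negligible. The algebraic starting point is the identity, valid componentwise for $j=1,\dots,n$ whenever $\e<\dist(x,\partial\Omega)$ (so that $\rho_\e(\cdot-x)e_j\in\xC^1_c(\Omega,\R^n)$),
\[
\delta V\ast\rho_\e(x)\cdot e_j=\delta V\big(\rho_\e(\cdot-x)\,e_j\big),
\]
obtained from \eqref{convolution_general_kernel} by recognizing $\langle\Pi_S\nabla_y\rho_\e(y-x),e_j\rangle=\Div_S\big(\rho_\e(\cdot-x)e_j\big)(y)$; combining this with \eqref{RNLdecomp} (and the evenness of $\rho_\e$) gives
\[
\delta V\ast\rho_\e(x)=-\int\rho_\e(x-y)\,H(y)\,d\V(y)+\int\rho_\e(x-y)\,d\delta V_s(y).
\]
By localizing to a ball $B_{r_0}(x)\subset\subset\Omega$ we may assume $\V(\Omega)<\infty$, since for $\e<r_0$ only $V$ restricted to $B_\e(x)$ enters the computation.

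I would then work at a point $x$ of the full--$\V$--measure set $G$ of points satisfying simultaneously: (i) $\V$ admits $T_xM$ as approximate tangent plane with multiplicity $\theta(x)\in(0,+\infty)$, i.e.\ $\e^{-d}\int\psi\big((y-x)/\e\big)\,d\V(y)\to\theta(x)\int_{T_xM}\psi\,d\cH^d$ for every $\psi\in\xC^0_c(\R^n)$ (the classical blow-up property of rectifiable varifolds, see \cite{simon}); (ii) $x$ is a Lebesgue point of $H$ with respect to $\V$; (iii) $|\delta V_s|(B_r(x))/\V(B_r(x))\to0$ as $r\to0$. Properties (i) and (ii) hold $\V$--a.e.\ by standard rectifiability and Lebesgue differentiation, and (iii) holds $\V$--a.e.\ because $\V$ and $|\delta V_s|$ are mutually singular (Besicovitch differentiation). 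Note that (i) in particular forces $\V(B_r(x))/(\omega_d r^d)\to\theta(x)$. Testing (i) with $\psi=\xi_1$ and with $\psi=\rho_1$, and using $\int_{T_xM}\rho_1\,d\cH^d=C_\rho$ and $\int_{T_xM}\xi_1\,d\cH^d=C_\xi$ (cf.\ \eqref{eqCrhoCxi}), one gets $\e^{n-d}\V\ast\xi_\e(x)\to\theta(x)C_\xi>0$ and $\e^{n-d}\V\ast\rho_\e(x)\to\theta(x)C_\rho$; in particular $\V\ast\xi_\e(x)>0$ for $\e$ small, so $H^V_{\rho,\xi,\e}(x)$ is well defined.

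For the absolutely continuous part I would write $H(y)=H(x)+(H(y)-H(x))$: the $H(x)$ term produces $-\theta(x)C_\rho H(x)\,\e^{d-n}$ up to $o(\e^{d-n})$ by the previous step, while the remainder is bounded by $\|\rho_1\|_\infty\,\e^{-d}\int_{B_\e(x)}|H-H(x)|\,d\V$, which tends to $0$ by (i) and (ii). For the singular part the crude bound $\e^{n-d}\big|\int\rho_\e(x-y)\,d\delta V_s(y)\big|\le\|\rho_1\|_\infty\,\e^{-d}|\delta V_s|(B_\e(x))$, rewritten as $\|\rho_1\|_\infty\cdot\frac{|\delta V_s|(B_\e(x))}{\V(B_\e(x))}\cdot\frac{\V(B_\e(x))}{\e^d}$, tends to $0$ by (iii) and (i). Altogether $\e^{n-d}\delta V\ast\rho_\e(x)\to-\theta(x)C_\rho H(x)$, and dividing by $\e^{n-d}\V\ast\xi_\e(x)\to\theta(x)C_\xi$ yields, from \eqref{HVgen},
\[
H^V_{\rho,\xi,\e}(x)=-\frac{C_\xi}{C_\rho}\,\frac{\e^{n-d}\delta V\ast\rho_\e(x)}{\e^{n-d}\V\ast\xi_\e(x)}\longrightarrow-\frac{C_\xi}{C_\rho}\cdot\frac{-\theta(x)C_\rho H(x)}{\theta(x)C_\xi}=H(x),
\]
which proves the statement since $\V(\Omega\setminus G)=0$.

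The routine ingredients are the blow-up of rectifiable measures and Lebesgue differentiation; the step that requires genuine care is the singular term. One must combine the density-ratio vanishing $|\delta V_s|(B_r(x))/\V(B_r(x))\to0$ (from mutual singularity, valid $\V$--a.e.) with the \emph{finite upper density} $\V(B_r(x))\le Cr^d$ at $\V$--a.e.\ $x$, so that the trivial $L^\infty$ bound on $\rho_\e$ upgrades to $\delta V_s\ast\rho_\e(x)=o(\e^{d-n})$, which is exactly what makes it negligible against the $\e^{d-n}$--sized denominator. A secondary, more technical point is to make sure the convolution of the first variation is correctly matched with the measure \eqref{RNLdecomp}: this is legitimate because for $\e<\dist(x,\partial\Omega)$ the vector fields $\rho_\e(\cdot-x)e_j$ are compactly supported in $\Omega$, so on them the extended first variation of Section \ref{section:RFV} coincides with the Radon measure $\delta V$.
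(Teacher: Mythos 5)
Your proposal is correct and follows essentially the same route as the paper: the Radon--Nikodym--Lebesgue decomposition $\delta V=-H\V+\delta V_s$, the blow-up/density properties of the rectifiable varifold giving $\e^{n-d}\V\ast\rho_\e(x)\to C_\rho\theta(x)$ and $\e^{n-d}\V\ast\xi_\e(x)\to C_\xi\theta(x)$, the Lebesgue-point argument for $H$, and the Besicovitch-type vanishing of $|\delta V_s|(B_\e(x))/\V(B_\e(x))$ to kill the singular term. The only difference is bookkeeping: you pass to the limit separately in the rescaled numerator and denominator, whereas the paper splits $|H^V_{\rho,\xi,\e}(x)-H(x)|$ directly into three terms by the triangle inequality, but the estimates involved are the same.
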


\begin{proof}
For $\V$--almost all $x \in \Omega$ we have
\begin{align}\notag
\Big| H_{\rho,\xi,\epsilon} (x) &- H(x) \Big |  = \left| - \frac{C_\xi}{C_\rho} \frac{\left( - H \V + \delta V_s \right) \ast \rho_\e (x)}{\V \ast \xi_\e (x)} - H(x) \right|  \\\notag
 &\leq \left| \frac{C_\xi}{C_\rho} \frac{\left( H \V \right) \ast \rho_\e (x)}{\V \ast \xi_\e (x)} - H(x) \frac{C_\xi}{C_\rho} \frac{ \V  \ast \rho_\e (x)}{\V \ast \xi_\e (x)} \right|  + \left| \frac{C_\xi}{C_\rho} \frac{ \V  \ast \rho_\e (x)}{\V \ast \xi_\e (x)} H(x) - H(x) \right| \\\notag 
 &\qquad + \frac{C_\xi}{C_\rho} \frac{| \delta V_s \ast \rho_\e (x) |}{\V \ast \xi_\e (x)}  \\\notag
&\leq \frac{C_\xi}{C_\rho} \frac{1}{\V \ast \xi_\e (x)} \int_{y \in \R^n} \hspace*{-0.3cm} \left| H(x) - H(y) \right|  \rho_\epsilon (x-y) \, d\V(y) \\ 
&\qquad + | H(x) | \left| \frac{C_\xi}{C_\rho} \frac{ \V  \ast \rho_\e (x)}{\V \ast \xi_\e (x)} - 1 \right| + \frac{| \delta V_s | \ast \rho_\e (x)}{\V \ast \xi_\e (x)} \: .  \label{eq:PointwiseCV1}
\end{align}
Being $V$ rectifiable, we can assume without loss of generality that the approximate tangent plane is defined at $x$, whence
\begin{align}
\label{eq:PointwiseCVdensity}
\e^{-d} & \V (B_\e (x)) \xrightarrow[\e \to 0]{} \omega_d \theta (x)\,, \\
\label{Cirho}
\epsilon^{n-d} & \V \ast \rho_\epsilon (x) = \frac{1}{\epsilon^d} \int_\Omega \rho \left( \frac{y-x}{\epsilon} \right) \, d \V(y) \xrightarrow[\epsilon \to 0]{} \theta (x) \int_{T_x M} \rho (y) \,d \cH^d (y) = C_\rho \theta (x) > 0 \: .
\end{align}
Thus by \eqref{eq:PointwiseCVdensity} and \eqref{Cirho} we obtain
\begin{equation} \label{eq:PointwiseCVthirdTerm}
\left| \frac{C_\xi}{C_\rho} \frac{ \V  \ast \rho_\e (x)}{\V \ast \xi_\e (x)} - 1 \right| \xrightarrow[\e \to 0]{} 0 \: .
\end{equation}
Again by \eqref{eq:PointwiseCVdensity} and \eqref{Cirho}, and for $\V$-almost any $x \in \Omega$ (precisely, at any Lebesgue point $x$ of $H \in \xL^1 (\V)$), we get
\begin{align}
\frac{1}{ \V \ast \xi_\epsilon (x) } & \int_{y \in \R^n} \left| H(x) - H(y) \right|  \rho_\epsilon (x-y) \, d\V(y) \nonumber \\
& \leq \frac{\| \rho \|_{\infty}\V (B_\epsilon (x))}{\epsilon^n \V \ast \xi_\epsilon (x) } \cdot \frac{1}{\V(B_\epsilon(x))} \int_{y \in B_\epsilon (x)} \left| H(x) - H(y) \right|  \, d\V(y) \nonumber \\
& = \| \rho \|_{\infty} \underbrace{ \frac{\epsilon^{-d} \V (B_\epsilon (x))}{\epsilon^{n-d} \V \ast \xi_\epsilon (x) } }_{\displaystyle \xrightarrow[\epsilon \to 0]{}\frac{\omega_d}{C_\xi}}  \underbrace{ \fint_{y \in B_\epsilon (x)} \left| H(x) - H(y) \right|  \, d\V(y) }_{\displaystyle \xrightarrow[\epsilon \to 0]{} 0 }\  \xrightarrow[\epsilon \to 0]{} 0 \: . \label{eq:PointwiseCVfirstTerm}
\end{align}
Similarly, for $\V$--almost every $x$ we get
\begin{equation} \label{eq:PointwiseCVsecondTerm}
\frac{\left| \delta V_s \right| \ast \rho_\epsilon (x) }{ \V \ast \xi_\epsilon (x) } \leq \| \rho \|_{\infty} \underbrace{ \frac{\epsilon^{-d} \V (B_\epsilon (x))}{\epsilon^{n-d} \V \ast \xi_\epsilon (x) } }_{\displaystyle \xrightarrow[\epsilon \to 0]{}\frac{\omega_d}{C_\xi}} \: \underbrace{ \frac{| \delta V_s |(B_\epsilon (x))}{\V(B_\epsilon(x))} }_{\displaystyle \xrightarrow[\epsilon \to 0]{} 0 } \xrightarrow[\epsilon \to 0]{} 0 \: .
\end{equation}
Then \eqref{eq:PointwiseCVthirdTerm}, \eqref{eq:PointwiseCVfirstTerm}, \eqref{eq:PointwiseCVsecondTerm}, and \eqref{eq:PointwiseCV1} yield \eqref{eq:PointwiseCVfixedV}. 

\end{proof}

Given a rectifiable $d$-varifold $V$ with locally bounded first variation and a sequence of generic $d$-varifolds $(V_{i})_{i}$ weakly--$*$ converging to $V$, our goal is now to determine an infinitesimal sequence of \textit{regularization scales} $(\e_{i})_{i}$, in dependence of an infinitesimal sequence $(d_i)_{i}$ measuring how well the $V_{i}$'s are locally approximating $V$, in order to derive an asymptotic, quantitative control of the error between the approximate mean curvatures of $V_{i}$ and $V$. In this spirit we obtain two convergence results, Theorem~\ref{thm:convergence2} and Theorem~\ref{thm:convergence3} that we describe hereafter.

In Theorem \ref{thm:convergence2} we extend the basic convergence property proved in Theorem \ref{thm:convergence1}. More specifically we show the pointwise convergence of $H^{V_{i}}_{\rho,\xi,\e_{i}}$ to $H$ as $i\to \infty$, up to an infinitesimal offset and for a suitable choice of $\e_{i}>0$ tending to zero as $i\to\infty$. The presence of an offset in the evaluation of $H^{V_{i}}_{\rho,\xi,\e_{i}}$ and $H$ (that is, we compare $H^{V_{i}}_{\rho,\xi,\e_{i}}(z_{i})$ with $H(x)$, where $z_{i}$ is a sequence of points converging to $x$) is motivated by the fact that we do not have $\supp \| V_i \| \subset \supp \V$ in general. Moreover, in typical applications one first constructs the varifold $V_{i}$ (which for instance could be a varifold solving some ``discrete approximation'' of a geometric variational problem or PDE) and then, by possibly applying Theorem \ref{bounded_first_variation_condition_for_sequences}, one infers the existence of a limit varifold $V$ of the sequence $(V_{i})_{i}$, up to extraction of a subsequence. In this sense, $V_{i}$ is typically explicit while $V$ is not. We also provide in \eqref{epsdeltaboh} an asymptotic, quantitative estimate of the gap between $H^{V_{i}}_{\rho,\xi,\e_{i}}(z_{i})$ and $H^{V}_{\rho,\xi,\e_{i}}(x)$ (notice that for this estimate we take the $\e_{i}$-regularized mean curvatures for both varifolds $V_{i}$ and $V$) in terms of the parameters $\e_{i},d_{i}$ and of the offset $|x-z_{i}|$. We stress that the regularity of $V$ that is assumed in Theorem \ref{thm:convergence2} is in some sense minimal (for instance the singular part $\delta V_{s}$ of the first variation may not be zero). The price to pay for such a generality is a non-optimal convergence rate, which can be improved under stronger regularity assumptions on $V$ and by using a modified notion of approximate mean curvature (see Definition \ref{dfn_modifiedMC} and Theorem~\ref{thm:convergence3}). 

From now on we require a few extra regularity on the pair of kernel profiles $(\rho,\xi)$, according to the following hypothesis. 
\begin{hyp}\label{rhoepsxieps}
We say that the pair of kernel profiles $(\rho,\xi)$ satisfies Hypothesis \ref{rhoepsxieps} if $\rho,\xi$ are as specified in Section \ref{section:pre} and, moreover, $\rho$ is of class $W^{2,\infty}$ while $\xi$ is of class $W^{1,\infty}$. 
\end{hyp}

We begin with a technical lemma providing the key estimates that are needed in the proofs of Theorems~\ref{thm:convergence2} and \ref{thm:convergence3}.

\begin{lemma} \label{lemma_technical_pointwise_convergence}
Let $\Omega \subset \R^n$ be an open set and let $V = v(M,\theta)$ be a rectifiable $d$--varifold in $\Omega$ with locally bounded first variation. Let $(\rho, \xi)$ satisfy Hypothesis~\ref{rhoepsxieps}.
Let $(V_i)_i$ be a sequence of $d$--varifolds.
Then, for every $0 < \epsilon <1$, for $\V$--almost every $x$, and for every sequence $z_i \to x$, one has
\begin{align}
%\epsilon^n \Big| \| V_i \| \ast \rho_\epsilon (x) - \V \ast \rho_\epsilon (x) \Big| & \leq \frac{1}{\epsilon} \| \rho \|_{\xW^{1,\infty}} \Delta^{1,1}_{B_\epsilon (x)} (\| V_i \| , \V) \label{eq_lemma_Vx}\\
\epsilon^n \Big| \| V_i \| \ast \xi_\epsilon (z_i) - \V \ast \xi_\epsilon (x) \Big| & \leq \frac{1}{\epsilon} \| \xi \|_{\xW^{1,\infty}} \left( \Delta^{1,1}_{B_{\epsilon + |x-z_i|} (x)} (\| V_i \| , \V) + |x-z_i|  \V \left( B_{\epsilon + |x - z_i|} (x) \right) \right) \label{eq_lemma_Vzi}
\end{align}
and
\begin{align}
%\epsilon^n | \delta V_i \ast \rho_\epsilon (x) - \delta V \ast \rho_\epsilon (x) | & \leq \frac{1}{\epsilon^2} \| \rho \|_{\xW^{2,\infty}} \Delta^{1,1}_{B_\epsilon (x)} (V_i,V) \label{eq_lemma_deltaVx}\\
\epsilon^n \Big| \delta V_i \ast \rho_\epsilon (z_i) - \delta V \ast \rho_\epsilon (x) \Big| & \leq
 \frac{1}{\epsilon^2} \| \rho \|_{\xW^{2,\infty}} \left( \Delta^{1,1}_{B_{\epsilon+ |x-z_i| } (x)} (V_i,V) + |x-z_i| \V \left( B_{\epsilon + |z_i - x|} (x)\right) \right).
 \label{eq_lemma_deltaVzi}
\end{align}
Moreover, if there exist two decreasing and infinitesimal sequences $(d_i)_i, \,(\eta_i)_i$, such that for any ball $B\subset\Omega$ centered at $x$ one has
\begin{equation} \label{eq_delta11_V}
\Delta^{1,1}_B(\|V\|,\|V_i\|) \leq d_i \min \Big( \V(B^{\eta_i}) , \| V_i \| (B^{\eta_i}) \Big) \,,
\end{equation}
then for any infinitesimal sequence $(\e_{i})_{i}$, such that $\eta_i + d_i + |x-z_i| = o(\e_i)$ as $i\to\infty$, one has  
\begin{equation} \label{eq_lemma_masszi}
\lim_{i\to\infty} \frac{\epsilon_i^n \|V_i\| \ast \xi_{\epsilon_i}(z_i)}{\V(B_{\epsilon_i} (x))}\ =\ \omega_{d}^{-1} \int_{B^{d}_{1}}\xi(|z|)\, d\cH^{d}(z)\,.  
\end{equation}
\end{lemma}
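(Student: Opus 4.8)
The plan is to prove the three conclusions separately. Both \eqref{eq_lemma_Vzi} and \eqref{eq_lemma_deltaVzi} rest on the same ``add and subtract'' device. For \eqref{eq_lemma_Vzi} I would start from $\epsilon^n\|V_i\|\ast\xi_\epsilon(z_i)=\int\xi_1\big(\tfrac{y-z_i}{\epsilon}\big)\,d\|V_i\|(y)$ and the analogous identity for $\|V\|\ast\xi_\epsilon(x)$, and insert the intermediate term $\int\xi_1\big(\tfrac{y-z_i}{\epsilon}\big)\,d\|V\|(y)$. This decomposes the difference into $\int\xi_1\big(\tfrac{y-z_i}{\epsilon}\big)\,(d\|V_i\|-d\|V\|)$ plus $\int\big[\xi_1\big(\tfrac{y-z_i}{\epsilon}\big)-\xi_1\big(\tfrac{y-x}{\epsilon}\big)\big]\,d\|V\|(y)$. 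The first summand is bounded by $\tfrac1\epsilon\|\xi\|_{\xW^{1,\infty}}\,\Delta^{1,1}_{B_{\epsilon+|x-z_i|}(x)}(\|V_i\|,\|V\|)$: after dividing by $\tfrac1\epsilon\|\xi\|_{\xW^{1,\infty}}$, the function $y\mapsto\xi_1\big(\tfrac{y-z_i}{\epsilon}\big)$ is admissible in Definition~\ref{dfn_localized_bounded_lipschitz_distance} because $\epsilon<1$ makes its Lipschitz constant $\epsilon^{-1}\|\xi'\|_\infty$ dominate its sup-norm, and its support lies in $B_{\epsilon+|x-z_i|}(x)$ (a harmless enlargement of $B_\epsilon(z_i)$). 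The integrand of the second summand is pointwise $\le\|\xi'\|_\infty\,\tfrac{|x-z_i|}{\epsilon}$ and vanishes outside $B_{\epsilon+|x-z_i|}(x)$, which produces the $|x-z_i|\,\|V\|(B_{\epsilon+|x-z_i|}(x))$ term and hence \eqref{eq_lemma_Vzi}.

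For \eqref{eq_lemma_deltaVzi} I would argue in the same way, now using the pointwise representation from Proposition~\ref{expression_of_delta_V^epsilon}, that is $\epsilon^n\,\delta V\ast\rho_\epsilon(x)=\tfrac1\epsilon\int\nabla^S\rho_1\big(\tfrac{y-x}{\epsilon}\big)\,dV(y,S)$, and writing the modulus of the $\R^n$-valued difference as the supremum over unit vectors $e$ of the action on the scalar kernels $(y,S)\mapsto\big\langle\nabla^S\rho_1\big(\tfrac{y-z_i}{\epsilon}\big),e\big\rangle$ on $\Omega\times\G$. The insertion of the intermediate term produces a $\Delta^{1,1}(V_i,V)$ contribution and a contribution controlled by the Lipschitz-in-$y$ modulus of $\nabla^S\rho_1$. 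The elementary facts needed are: $\nabla\rho_1$ is bounded by $\|\rho'\|_\infty$ and Lipschitz with constant $\le\|\rho''\|_\infty$ --- this uses that $\rho$ is even and of class $\xW^{2,\infty}$, so $\rho'(0)=0$ and $|\rho'(t)/t|\le\|\rho''\|_\infty$, whence $\|\nabla^2\rho_1\|_\infty\le\|\rho''\|_\infty$ --- and $|\Pi_S v-\Pi_P v|\le\|S-P\|\,|v|$, so that the dependence on the $\G$ variable costs at most $\|\rho'\|_\infty$. Normalizing the scalar kernel by $\tfrac1\epsilon\|\rho\|_{\xW^{2,\infty}}$ and combining with the prefactor $\tfrac1\epsilon$ gives the $\epsilon^{-2}$ in \eqref{eq_lemma_deltaVzi}.

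To prove \eqref{eq_lemma_masszi} I would apply \eqref{eq_lemma_Vzi} with $\epsilon=\epsilon_i$ and then \eqref{eq_delta11_V} on the ball $B=B_{\epsilon_i+|x-z_i|}(x)$ (legitimate for large $i$, since $\Omega$ is open), bounding $\min(\|V\|(B^{\eta_i}),\|V_i\|(B^{\eta_i}))\le\|V\|(B_{r_i}(x))$ with $r_i:=\epsilon_i+|x-z_i|+\eta_i$, to obtain
\[
\frac{\epsilon_i^n\,\bigl|\,\|V_i\|\ast\xi_{\epsilon_i}(z_i)-\|V\|\ast\xi_{\epsilon_i}(x)\,\bigr|}{\|V\|(B_{\epsilon_i}(x))}\ \le\ \frac{\|\xi\|_{\xW^{1,\infty}}\,(d_i+|x-z_i|)}{\epsilon_i}\cdot\frac{\|V\|(B_{r_i}(x))}{\|V\|(B_{\epsilon_i}(x))}\,.
\]
Taking $x$ in the full-measure set where $V$ has an approximate tangent plane, the density identity \eqref{eq:PointwiseCVdensity} gives $\|V\|(B_s(x))\sim\omega_d\theta(x)s^d$ as $s\to0$ with $\theta(x)>0$, so the last ratio tends to $(r_i/\epsilon_i)^d=\big(1+(|x-z_i|+\eta_i)/\epsilon_i\big)^d\to1$, while $(d_i+|x-z_i|)/\epsilon_i\to0$ because $\eta_i+d_i+|x-z_i|=o(\epsilon_i)$; hence the left-hand side is infinitesimal. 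It remains to evaluate $\lim_i\epsilon_i^n\|V\|\ast\xi_{\epsilon_i}(x)/\|V\|(B_{\epsilon_i}(x))$. Since $\epsilon_i^n\|V\|\ast\xi_{\epsilon_i}(x)=\int\xi(|y-x|/\epsilon_i)\,d\|V\|(y)$ and $\xi$ is continuous with compact support, the blow-up argument already used for \eqref{Cirho} (with $\xi$ in place of $\rho$) yields $\epsilon_i^{-d}\int\xi(|y-x|/\epsilon_i)\,d\|V\|(y)\to\theta(x)\int_{B^d_1}\xi(|z|)\,d\cH^d(z)$, while $\epsilon_i^{-d}\|V\|(B_{\epsilon_i}(x))\to\omega_d\theta(x)$; dividing gives \eqref{eq_lemma_masszi}.

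The core of the argument is bookkeeping rather than a deep estimate. The points requiring care are: keeping track of which measure (between $\|V\|$ and $\|V_i\|$, resp.\ $V$ and $V_i$) each term is integrated against, so that $\Delta^{1,1}$ is applied with a correctly normalized, scalar, compactly supported test function; and the mild regularity check on $\nabla^S\rho_1$ at the origin when the hypothesis on $\rho$ is strengthened to $\xW^{2,\infty}$. The sole genuinely analytic input --- the blow-up of $\|V\|$ around a $\|V\|$-almost every point --- is already packaged in \eqref{eq:PointwiseCVdensity}--\eqref{Cirho}.
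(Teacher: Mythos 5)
Your proof is correct and follows essentially the same route as the paper: the same add-and-subtract decomposition with the intermediate term, the same Lipschitz test-function bounds against $\Delta^{1,1}$ (with the $\epsilon<1$ normalization), and the same blow-up/density argument at points with an approximate tangent plane for \eqref{eq_lemma_masszi}. Your only deviation is cosmetic: you recycle \eqref{eq_lemma_Vzi} directly in the proof of \eqref{eq_lemma_masszi}, whereas the paper re-derives the two corresponding estimates there, but the underlying bounds are identical.
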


\begin{proof}
We start with the proof of \eqref{eq_lemma_Vzi}. By definition of $\Delta^{1,1}_B$, for all $\phi \in \xLip (\Omega )$ such that $\supp \phi \subset B$,
\begin{equation} \label{eq_recall_lipschitz_convergence_with_holder_condition}
\left| \int \phi(y)\, d \| V_i \|(y)  - \int \phi(y)\, d \V(y) \right| \leq  (\| \phi \|_{\infty} + \xlip (\phi))\, \Delta^{1,1}_B (\|V_i\|,\V) \: .
\end{equation}
Since the function $y \mapsto \xi \left( \frac{|y-z_i|}{\epsilon} \right)$ is $\frac{1}{\epsilon} \xlip(\xi)$--Lipschitz and supported in $B_{\epsilon + |z_i - x|} (x)$, we have
\begin{align}
\epsilon^n \Big| \| V_i \| \ast \xi_\epsilon (z_i) - \V \ast \xi_\epsilon (z_i) \Big| & = \left| \int_\Omega \xi  \left( \frac{|y-z_i|}{\epsilon} \right) \, d \| V_i \| (y) - \int_\Omega \xi  \left( \frac{|y-z_i|}{\epsilon} \right) \, d \V (y) \right| \nonumber \\
& \leq \left( \| \xi \|_{\infty} + \frac{1}{\epsilon} \xlip(\xi) \right) \Delta^{1,1}_{B_{\epsilon + |x-z_i|} (x)} (\| V_i \| , \V) \nonumber \\
& \leq \frac{1}{\epsilon} \| \xi \|_{\xW^{1,\infty}} \Delta^{1,1}_{B_{\epsilon + |x-z_i|} (x)} (\| V_i \| , \V) \quad \text{since } \epsilon \leq 1 \: . \label{proof_technicalLemma_1}
\end{align}
Then we have
\begin{align}
\epsilon^n \Big| \V \ast \xi_\epsilon (z_i) - \V \ast \xi_\epsilon (x) \Big| & \leq \left| \int_\Omega \xi \left( \frac{|y-z_i|}{\epsilon} \right) \, d \V(y) - \int_\Omega \xi \left( \frac{|y-x|}{\epsilon} \right) \, d \V(y) \right| \nonumber \\
& \leq \xlip (\xi) \frac{1}{\epsilon} \int_{B_{\epsilon + |x - z_i|} (x)} \left( \Big| |y-x| - |y-z_i| \Big| \right) \, d \V(y) \nonumber \\
& \leq \frac 1\e \| \xi \|_{\xW^{1,\infty}} \big|x-z_i\big| \V \left( B_{\epsilon + |x - z_i|} (x) \right) \:  \label{proof_technicalLemma_2}
\end{align}
By combining \eqref{proof_technicalLemma_1} and \eqref{proof_technicalLemma_2} we get \eqref{eq_lemma_Vzi}.

We similarly prove \eqref{eq_lemma_deltaVzi}. The mapping
$ \displaystyle
(y,S ) \in \Omega \times \G \mapsto \nabla^S \rho_1 \left(\frac{y-z_i}{\epsilon}\right)
$ 
has a Lipschitz constant $\displaystyle \leq \frac{1}{\epsilon} \| \rho \|_{\xW^{2,\infty}}$ and support in $B_{\epsilon + |x-z_i|}(x) \times \G$, therefore
\begin{align}
\epsilon^n \Big| \delta V_i \ast \rho_\epsilon (z_i) - \delta V \ast \rho_\epsilon (z_i)  \Big| & = \frac{1}{\epsilon} \left| \int_{\Omega \times \G} \hspace{-13.2pt} \nabla^S \rho_1 \left(\frac{y-z_i}{\epsilon}\right) \, dV_i(y,S) - \int_{\Omega \times \G} \hspace{-13.2pt} \nabla^S \rho_1 \left(\frac{y-z_i}{\epsilon}\right) \, dV(y,S)  \right| \nonumber \\
& \leq \frac{1}{\epsilon^{2}} \| \rho \|_{\xW^{2,\infty}} \Delta^{1,1}_{B_{\epsilon + |x-z_i|} (x)} (V_i , V) \: . \label{proof_technicalLemma_3}
\end{align}
Moreover one has
\begin{align}
\epsilon^n \Big| \delta V \ast \rho_\epsilon (z_i) - \delta V \ast \rho_\epsilon (x)  \Big| & = \frac{1}{\epsilon} \left| \int_{\Omega \times \G} \Pi_S \left( \nabla \rho_1 \left(\frac{y-z_i}{\epsilon}\right) - \nabla \rho_1 \left(\frac{y-x}{\epsilon}\right) \right) \, dV(y,S) \right| \nonumber \\
& \leq  \frac{1}{\epsilon^2} \xlip (\nabla \rho_1) \int_{B_{\epsilon + |z_i-x|} (x) \times \G} \Big| |x-y| - |z_i - y|  \Big| \, dV(y,S) \nonumber \\
& \leq \frac{|x-z_i|}{\epsilon^2} \| \rho \|_{\xW^{2,\infty}} \V \left( B_{\epsilon + |z_i-x|} (x) \right) \: . \label{proof_technicalLemma_4}
\end{align}
By combining \eqref{proof_technicalLemma_3} and \eqref{proof_technicalLemma_4} we get \eqref{eq_lemma_deltaVzi}.

We finally prove \eqref{eq_lemma_masszi}. We take $x$ in the support of $\V$, such that the approximate tangent plane $T_{x}M$ is defined and $x$ is a Lebesgue point for the multiplicity function $\theta$ (of course we can also assume $\theta(x) >0$). We thus have 
\[
\e_{i}^{n}\V\ast \xi_{\e_{i}}(x) = \int \xi(|y-x|/\e_{i})\, d\V \sim \e_{i}^{d}\theta(x) \int_{T_{x}M} \xi(|z|)\, d\cH^{d}(z)\,,
\]
where $a_{i}\sim b_{i}$ means that $a_{i} = b_{i} + o(b_{i})$ as $i\to\infty$.
Then we notice that 
\[
\V(B_{\e_{i}}(x)) \sim \omega_{d}\e_{i}^{d} \theta(x)\,.
\]
By combining the two relations above we obtain
\begin{equation}\label{eq:controllo1}
\e_{i}^{n}\V\ast \xi_{\e_{i}}(x) \sim \omega_{d}^{-1} \int_{B^{d}_{1}}\xi(|z|)\, d\cH^{d}(z)\, \V(B_{\e_{i}}(x))\,,
\end{equation}
which corresponds to \eqref{eq_lemma_masszi} in the special case $z_{i}=x$ and $V_{i}=V$ for all $i$. On the other hand the general case is easily proved as soon as we check that 
\[
A_{i} = \Big|\V\ast\xi_{\e_{i}}(z_{i}) - \|V_{i}\|\ast \xi_{\e_{i}}(z_{i})\Big|\quad
\text{and}\quad
B_{i} = \Big|\V\ast\xi_{\e_{i}}(z_{i}) - \V\ast \xi_{\e_{i}}(x)\Big|
\]
satisfy 
\begin{equation}\label{aipiubiopiccolo}
A_{i}+B_{i} = o(\e_{i}^{d-n})\qquad \text{when $i\to\infty$.}
\end{equation}
Indeed we first notice that, since $\xlip(\xi_{\e_{i}}) = \e_{i}^{-n-1}\xlip(\xi)$ and owing to \eqref{eq_delta11_V}, we have up to multiplicative constants
\begin{align}\notag
A_{i} &\le \e_{i}^{-n-1} \Delta^{1,1}_{B_{\e_{i}}(z_{i})}(\V,\|V_{i}\|)\\\notag 
&\le \e_{i}^{-n-1} d_{i}\V(B_{\e_{i}+\eta_{i}}(z_{i}))\\\notag 
&\le \e_{i}^{-n-1} d_{i}\V(B_{\e_{i}+\eta_{i}+|z_{i}-x|}(x))\\\label{ineg-Ai} 
&\le \e_{i}^{d-n-1}\, d_{i}\,.
\end{align}
Then, we notice that 
\begin{align}\notag
B_{i} &= \left|\int (\xi(|y-x|/\e_{i}) - \xi(|y-z_{i}|/\e_{i})\, d\V \right|\\\notag 
&\le \xlip(\xi) |x-z_{i}|\, \frac{\V(B_{\e_{i}+|x-z_{i}|}(x))}{\e_{i}^{n+1}}\\\label{ineg-Bi} 
&\sim C \e_{i}^{d-n-1}\, |x-z_{i}|
\end{align}
where $C = \omega_{d}\,\xlip(\xi)\theta(x)$. Finally, by combining \eqref{ineg-Ai} and \eqref{ineg-Bi} we conclude that for $i$ large enough
\[
A_{i}+B_{i} \le C \e_{i}^{d-n-1}(d_{i}+|x-z_{i}|) = o(\e_i^{d-n})\,,
\]
i.e., that \eqref{aipiubiopiccolo} holds true. This proves \eqref{eq_lemma_masszi} at once.
\end{proof}

\begin{theo}[Convergence II] \label{thm:convergence2}
Let $\Omega \subset \R^n$ be an open set and let $V = v(M,\theta)$ be a rectifiable $d$--varifold in $\Omega$ with bounded first variation. Let $(\rho, \xi)$ satisfy  Hypothesis~\ref{rhoepsxieps}.
Let $(V_i)_i$ be a sequence of $d$--varifolds, for which there exist two positive, decreasing and infinitesimal sequences $(\eta_i)_i,(d_i)_i$, such that for any ball $B\subset\Omega$ centered in $\supp \V$, one has
\begin{equation}  \label{eq_thm_pointwise_cv_hyp2}
\Delta^{1,1}_B(V,V_i) \leq d_i \min \big( \V(B^{\eta_i}) , \| V_i \|(B^{\eta_i}) \big) \,.
\end{equation}
For $\V$--almost any $x \in \Omega$ and for any sequence $(z_i)_{i}$ tending to $x$, let $(\e_{i})_{i}$ be a positive, decreasing and infinitesimal sequence such that
\begin{equation} \label{eq_eps_condition}
\frac{d_i + |x-z_i|}{\epsilon_i^2} \xrightarrow[i \to \infty]{} 0 \quad \text{and} \quad \frac{\eta_i}{\epsilon_i} \xrightarrow[i \to \infty]{} 0\,.
\end{equation}
Then we have
\begin{align}
\label{epsdeltaboh}
\left| H_{\rho,\xi,\epsilon_i}^{V_i} (z_i) - H_{\rho,\xi,\epsilon_i}^V(x) \right| &\leq C \| \rho \|_{\xW^{2,\infty}} \frac{d_i + |x-z_i|}{\epsilon_i^2}\qquad \text{for $i$ large enough,} \\\label{convergenceboh}
H_{\rho,\xi,\epsilon_i}^{V_i} (z_i)  \xrightarrow[i \to \infty]{} H(x) \: .
\end{align}
\end{theo}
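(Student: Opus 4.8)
The plan is to derive the quantitative estimate \eqref{epsdeltaboh} directly from Lemma~\ref{lemma_technical_pointwise_convergence}, controlling the denominators by \eqref{eq_lemma_masszi}, and then to deduce \eqref{convergenceboh} by combining \eqref{epsdeltaboh} with Theorem~\ref{thm:convergence1}. I would fix a point $x$ in the full-measure set where the approximate tangent plane $T_xM$ exists, $\theta(x)>0$, the conclusions of Lemma~\ref{lemma_technical_pointwise_convergence} hold, and $H^V_{\rho,\xi,\e}(x)\to H(x)$ as $\e\to 0$ (this last by Theorem~\ref{thm:convergence1}), together with $z_i\to x$ and $(\e_i)$ as in the statement. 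First I would record two reductions: testing the $\Delta^{1,1}$ distance only with functions of the spatial variable gives $\Delta^{1,1}_B(\|V\|,\|V_i\|)\le\Delta^{1,1}_B(V,V_i)$, so hypothesis \eqref{eq_thm_pointwise_cv_hyp2} implies the weaker \eqref{eq_delta11_V}; and \eqref{eq_eps_condition} forces $d_i+|x-z_i|=o(\e_i^2)=o(\e_i)$ and $\eta_i=o(\e_i)$, hence $\eta_i+d_i+|x-z_i|=o(\e_i)$, which is exactly the condition under which \eqref{eq_lemma_masszi} applies. I would also use the density identity $\e^{-d}\V(B_\e(x))\to\omega_d\theta(x)$, which gives, for $i$ large and every $r\le\e_i+|x-z_i|+\eta_i$, a uniform bound $\V(B_r(x))\le C\theta(x)\e_i^d$.

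Then I would set $N_i=\delta V_i\ast\rho_{\e_i}(z_i)$, $N=\delta V\ast\rho_{\e_i}(x)$, $D_i=\|V_i\|\ast\xi_{\e_i}(z_i)$, $D=\V\ast\xi_{\e_i}(x)$, so that for $i$ large (when all these quantities are positive)
\[
\left|H^{V_i}_{\rho,\xi,\e_i}(z_i)-H^V_{\rho,\xi,\e_i}(x)\right|=\frac{C_\xi}{C_\rho}\left|\frac{N_i}{D_i}-\frac{N}{D}\right|\le\frac{C_\xi}{C_\rho}\left(\frac{|N_i-N|}{D_i}+\frac{|N|}{D}\,\frac{|D-D_i|}{D_i}\right).
\]
By \eqref{eq_lemma_masszi} applied to $(V_i,z_i)$ and to $(V,x)$ — the latter being \eqref{eq:controllo1} — together with the density identity and $C_\xi>0$, one gets $D_i\ge c\e_i^{d-n}$ and $D\ge c\e_i^{d-n}$ for $i$ large, with $c=c(\theta(x),C_\xi,n,d)>0$. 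Next, \eqref{eq_lemma_deltaVzi} combined with \eqref{eq_thm_pointwise_cv_hyp2} (bounding the $\Delta^{1,1}$ term by $d_i\V(B_{\e_i+|x-z_i|+\eta_i}(x))$) and the ball bound yields $\e_i^n|N_i-N|\le C\|\rho\|_{\xW^{2,\infty}}\theta(x)\e_i^{d-2}(d_i+|x-z_i|)$, while \eqref{eq_lemma_Vzi} yields $\e_i^n|D-D_i|\le C\|\xi\|_{\xW^{1,\infty}}\theta(x)\e_i^{d-1}(d_i+|x-z_i|)$. Finally $\frac{|N|}{D}=\frac{C_\rho}{C_\xi}|H^V_{\rho,\xi,\e_i}(x)|$ is bounded for $i$ large, since it converges to $\frac{C_\rho}{C_\xi}|H(x)|$. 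Substituting these four bounds, dividing, and using $\e_i\le1$ together with $\|\rho\|_{\xW^{2,\infty}}\ge\|\rho\|_\infty\ge\omega_n^{-1}$ (from the normalization of $\rho$) to absorb the $\|\xi\|_{\xW^{1,\infty}}/\e_i$ contribution into a $\|\rho\|_{\xW^{2,\infty}}/\e_i^2$ term, one reaches \eqref{epsdeltaboh}.

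Then \eqref{convergenceboh} would follow at once by writing
\[
\left|H^{V_i}_{\rho,\xi,\e_i}(z_i)-H(x)\right|\le\left|H^{V_i}_{\rho,\xi,\e_i}(z_i)-H^V_{\rho,\xi,\e_i}(x)\right|+\left|H^V_{\rho,\xi,\e_i}(x)-H(x)\right|\,,
\]
where the first term tends to $0$ by \eqref{epsdeltaboh} and the hypothesis $\frac{d_i+|x-z_i|}{\e_i^2}\to0$, and the second tends to $0$ by Theorem~\ref{thm:convergence1}.

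The only genuinely delicate point I anticipate is keeping the denominators $D_i$ bounded below by a fixed multiple of $\e_i^{d-n}$: this is precisely where the shape of the hypotheses is used, since \eqref{eq_thm_pointwise_cv_hyp2} is what upgrades to \eqref{eq_delta11_V} and the first relation in \eqref{eq_eps_condition} is what guarantees $\eta_i+d_i+|x-z_i|=o(\e_i)$, so that \eqref{eq_lemma_masszi} can be invoked. Everything else is routine bookkeeping with the bounds of Lemma~\ref{lemma_technical_pointwise_convergence}, once one observes that $|\delta V\ast\rho_{\e_i}(x)|/(\V\ast\xi_{\e_i}(x))$ is already under control through Theorem~\ref{thm:convergence1}.
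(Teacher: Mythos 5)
Your proposal is correct and follows essentially the same route as the paper's proof: the same splitting of the difference of quotients into a numerator term and a term of the form $\frac{|N|}{D}\,\frac{|D-D_i|}{D_i}$, the same use of Lemma~\ref{lemma_technical_pointwise_convergence} (estimates \eqref{eq_lemma_Vzi}, \eqref{eq_lemma_deltaVzi} and the lower bound on the regularized masses from \eqref{eq_lemma_masszi}, after upgrading \eqref{eq_thm_pointwise_cv_hyp2} to \eqref{eq_delta11_V}), and the same appeal to Theorem~\ref{thm:convergence1} both to bound $|H^{V}_{\rho,\xi,\e_i}(x)|$ and to conclude \eqref{convergenceboh} by the triangle inequality. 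The only cosmetic difference is how the lower-order $\|\xi\|_{\xW^{1,\infty}}/\e_i$ contribution is absorbed into the $\|\rho\|_{\xW^{2,\infty}}/\e_i^2$ term, which is harmless since the constant in \eqref{epsdeltaboh} is unspecified.
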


\begin{proof}
We focus on the proof of \eqref{epsdeltaboh}. We have
\begin{multline}
\left| H_{\rho,\xi,\epsilon_i}^{V_i} (z_i)  - H_{\rho,\xi,\epsilon_i}^V(x) \right|  \leq \frac{C_\xi}{C_\rho} \left|  \frac{\delta V_i \ast \rho_{\epsilon_i} (z_i)}{\| V_i \| \ast \xi_{\epsilon_i} (z_i)} - \frac{\delta V \ast \rho_{\epsilon_i} (x)}{\V \ast \xi_{\epsilon_i} (x)} \right|   \\
 \leq \frac{C_\xi}{C_\rho} \frac{\left| \delta V_i \ast \rho_{\epsilon_i} (z_i) - \delta V \ast \rho_{\epsilon_i} (x)  \right|}{\| V_i \| \ast \xi_{\epsilon_i} (z_i)}  + \frac{C_\xi}{C_\rho} \left| \delta V \ast \rho_{\epsilon_i}(x) \right| \left| \frac{1}{\| V_i \| \ast \xi_{\epsilon_i} (z_i)} - \frac{1}{\V \ast \xi_{\epsilon_i} (x)} \right| \: . \label{eq_proof_pointwise_approximate_mean_curvature_1}
\end{multline}

We study the convergence of the first term in \eqref{eq_proof_pointwise_approximate_mean_curvature_1}. Notice that assumption \eqref{eq_thm_pointwise_cv_hyp2} implies \eqref{eq_delta11_V} since 
\begin{equation}  \label{eq_delta11VV}
\Delta^{1,1}_B (\|V_i\|,\V) \leq \Delta^{1,1}_B (V_i,V) \: .
\end{equation}
We conveniently set $\mu_{i} = \e_{i}+|x-z_{i}|+\eta_{i}$. Then 
owing to Lemma~\ref{lemma_technical_pointwise_convergence}  \eqref{eq_lemma_masszi} and  \eqref{eq_thm_pointwise_cv_hyp2}, for $i$ large enough we obtain 
\begin{align}
\frac{C_\xi}{C_\rho} & \frac{\left| \delta V_i \ast \rho_{\epsilon_i} (z_i) - \delta V \ast \rho_{\epsilon_i} (x)  \right|}{\| V_i \| \ast \xi_{\epsilon_i} (z_i)} \nonumber \\
& \leq \frac{C_\xi}{C_\rho} \frac{1}{\| V_i \| \ast \xi_{\epsilon_i} (z_i)} \frac{1}{\epsilon_i^{n+2}} \| \rho \|_{\xW^{2,\infty}} \Big( d_i \V \left(B_{\mu_{i}}(x) \right) + |x-z_i| \V \left(B_{\mu_{i}}(x) \right) \Big) \nonumber \\
& \leq \frac{4}{C_\rho} \| \rho \|_{\xW^{2,\infty}} \frac{d_i+ |x - z_i|}{\epsilon_i^2} \frac{\V \left(B_{\mu_{i}}(x) \right)}{\V \left(B_{\epsilon_i}(x) \right)} \nonumber \\
& \leq \frac{8}{C_\rho} \| \rho \|_{\xW^{2,\infty}} \frac{d_i + |x-z_i|}{\epsilon_i^2} \: , \label{proof_thm_pointwiseCV_1}
\end{align}
where in the last inequality we have used that 
\begin{equation} \label{eq_proof_thmCV_density}
\frac{\V \left(B_{\mu_{i}}(x \right))}{\V \left(B_{\epsilon_i}(x)\right)} = \frac{\mu_{i}^d}{\epsilon_i^d}  +o(1) \xrightarrow[i \to \infty]{} 1
\end{equation}
for $\V$-almost every $x$. It remains to study the second term in \eqref{eq_proof_pointwise_approximate_mean_curvature_1}. Applying Lemma~\ref{lemma_technical_pointwise_convergence} \eqref{eq_lemma_Vzi} and \eqref{eq_lemma_masszi} together with \eqref{eq_thm_pointwise_cv_hyp2}, \eqref{eq_delta11VV}, Theorem \ref{thm:convergence1}, and \eqref{eq_proof_thmCV_density},
we obtain for $i$ large enough
\begin{align}
\frac{C_\xi}{C_\rho} & \left| \delta V \ast \rho_{\epsilon_i}(x) \right|  \left| \frac{1}{\| V_i \| \ast \xi_{\epsilon_i}(z_i)} - \frac{1}{\V \ast \xi_{\epsilon_i}(x)} \right| \nonumber \\
 & =  \frac{C_\xi}{C_\rho} \frac{\left| \delta V \ast \rho_{\epsilon_i}(x) \right|}{\V \ast \xi_{\epsilon_i}(x)}  \frac{1}{\| V_i \| \ast \xi_{\epsilon_i}(z_i)} \Big| \V \ast \xi_{\epsilon_i}(x) - \| V_i \| \ast \xi_{\epsilon_i}(z_i) \Big| \nonumber \\
& \leq  \frac{C_\xi}{C_\rho} \frac{\left| \delta V \ast \rho_{\epsilon_i}(x) \right|}{\V \ast \xi_{\epsilon_i}(x)}  \frac{1}{\e_i^n \| V_i \| \ast \xi_{\epsilon_i}(z_i)} \frac{1}{\epsilon_i} \| \xi \|_{\xW^{1,\infty}} \Big( d_i \V(B_{\mu_{i}}(x)) + |x-z_i| \V(B_{\mu_{i}}(x)) \Big) \nonumber \\
& \leq \left| H_{\rho,\xi,\e_i}^V(x) \right| \frac{\V(B_{\epsilon_i}(x))}{\e_i^n \| V_i \| \ast \xi_{\epsilon_i}(z_i)} \frac{\V(B_{\mu_{i}}(x))}{\V(B_{\epsilon_i}(x))} \| \xi \|_{\xW^{1,\infty}} \frac{d_i + |x-z_i|}{\epsilon_i}  \nonumber \\
& \leq \frac{8\| \xi \|_{\xW^{1,\infty}}}{C_\xi} \big(|H(x)|+1\big) \frac{d_i + |x-z_i|}{\epsilon_i} \xrightarrow[i \to \infty]{} 0 \: . \label{eq_proof_pointwiseCV_2ndTerm}
\end{align}

Thanks to \eqref{proof_thm_pointwiseCV_1} and \eqref{eq_proof_pointwiseCV_2ndTerm}, for $\V$--almost any $x$ and for $i$ large enough (possibly depending on $x$) one has
\begin{align*}
\left| H_{\rho,\xi,\e_i}^{V_i} (z_i) - H_{\rho,\xi,\e}^V(x) \right| &\leq \frac{8}{C_\rho} \| \rho \|_{\xW^{2,\infty}} \frac{d_i + |x-z_i|}{\epsilon_i^2} + \frac{8\| \xi \|_{\xW^{1,\infty}}}{C_\xi} \big(|H(x)|+1\big) \frac{d_i + |x-z_i|}{\epsilon_i} \\
&= \frac{8}{C_\rho} \| \rho \|_{\xW^{2,\infty}} \frac{d_i+|x-z_i|}{\epsilon_i^2} + O\left(\frac{d_{i}+|x-z_i|}{\e_{i}}\right)\: ,
\end{align*}
which implies \eqref{epsdeltaboh}.

Finally, thanks to Theorem~\ref{thm:convergence1}, for $\V$--almost any $x$ we find
\begin{align*}
\left| H_{\rho,\xi,\epsilon_i}^{V_i} (z_i) - H(x) \right| \leq \left| H_{\rho,\xi,\epsilon_i}^{V_i} (z_i) - H_{\rho,\xi,\epsilon_i}^V(x) \right| + \underbrace{ \left| H_{\rho,\xi,\epsilon_i}^V (x) - H(x) \right| }_{\xrightarrow[\: \: i\to \infty \: \:]{} 0}\: ,
\end{align*}
which combined with \eqref{epsdeltaboh} gives \eqref{convergenceboh} at once.

\end{proof}

Below we prove a third, pointwise convergence result where a better convergence rate shows up when the limit varifold is (locally) a manifold $M$ of class $\xC^2$ endowed with multiplicity $=1$.
First we notice that $H_{\rho,\xi,\e}^V(x)$ is an integral of tangentially projected vectors, while the (classical) mean curvature of $M$ is a normal vector. This means that even small errors affecting the mass distribution of the approximating varifolds $V_{i}$ might lead to non-negligible errors in the tangential components of the approximate mean curvature. 
A workaround for this is, then, to project $H_{\rho,\xi,\e}^V(x)$ onto the normal space at $x$. In order to properly define the orthogonal component of the mean curvature of a general varifold $V$, we recall the Young measures-type representation of $V$ (see Proposition \ref{prop:Young}):
\[
V(\phi) = \int_{x\in \Omega}\int_{P\in \G} \phi(x,P)\, d\nu_{x}(P)\, d\V(x),\qquad \forall\, \phi\in C^{0}_{c}(\Omega\times \G)\,.
\]
At this point we can introduce the following definition.
\begin{dfn}[orthogonal approximate mean curvature] \label{dfn_modifiedMC}
Let $\Omega \subset \R^n$ be an open set and let $V$ be a $d$--varifold in $\Omega$. For $\V$-almost every $x$ an orthogonal approximate mean curvature of $V$ at $x$ is defined as
\begin{equation} 
H_{\rho,\xi,\e}^{V, \perp} (x) =  
 \int_{P \in \G} \Pi_{P^\perp} \left( H_{\rho,\xi,\e}^{V}(x) \right) \, d \nu_x(P)\: .\label{eq_modified_Heps_1} 
 \end{equation}
\end{dfn}

We first check a basic approximation property of the orthogonal approximate mean curvature (Proposition~\ref{prop_consistency_modifiedMC} below, an immediate consequence of Theorem~\ref{thm:convergence1} and of a classical result due to Brakke). Then, in Theorem~\ref{thm:convergence3} we prove a better convergence rate under stronger regularity assumptions on $V$ and sufficient accuracy in the approximation of $V$ by $V_{i}$.

\begin{prop} \label{prop_consistency_modifiedMC}
Let $\Omega \subset \R^n$ be an open set and let $V = v(M,\theta)$ be an integral $d$--varifold with bounded first variation $\delta V = - H \, \V + \delta V_s$. Then, for $\cH^{d}$--almost any $x \in M\cap \Omega$ we have
\begin{equation} 
 H^{V,\perp}_{\rho,\xi,\epsilon} (x) \xrightarrow[\epsilon \to 0]{} H(x) \: .
\end{equation}
\end{prop}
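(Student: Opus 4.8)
The plan is to reduce the statement to Theorem~\ref{thm:convergence1} by exploiting the rectifiable structure of $V$ together with the perpendicularity of the generalized mean curvature. First I would observe that, since $V = v(M,\theta)$ is rectifiable, its Young-measure disintegration (Proposition~\ref{prop:Young}) is the explicit one coming from Definition~\ref{def:rdv}, namely $\nu_{x} = \delta_{T_{x}M}$ for $\V$--almost every $x$, where $T_{x}M$ denotes the approximate tangent plane to $M$ at $x$. Plugging this into the definition \eqref{eq_modified_Heps_1} collapses the integral over $\G$ and yields
\[
H_{\rho,\xi,\e}^{V,\perp}(x) = \Pi_{(T_{x}M)^{\perp}}\big(H_{\rho,\xi,\e}^{V}(x)\big)
\]
for $\V$--almost every $x$. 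I would also note that ``$\V$--almost every $x$'' and ``$\cH^{d}$--almost every $x\in M$'' describe the same negligible subsets of $M$, since $\V = \theta\,\cH^{d}_{|M}$ with $\theta>0$ $\cH^{d}$--almost everywhere on $M$.

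Next I would fix a point $x$ at which the above identification holds and, simultaneously, the conclusion of Theorem~\ref{thm:convergence1} holds, i.e. $H_{\rho,\xi,\e}^{V}(x)\to H(x)$ as $\e\to 0$; this is still true for $\V$--almost every $x$. For that fixed $x$, the orthogonal projection $\Pi_{(T_{x}M)^{\perp}}$ onto the normal space is a single ($\e$-independent) linear endomorphism of $\R^{n}$ with operator norm $1$, hence continuous, so one may pass to the limit:
\[
H_{\rho,\xi,\e}^{V,\perp}(x) = \Pi_{(T_{x}M)^{\perp}}\big(H_{\rho,\xi,\e}^{V}(x)\big) \xrightarrow[\e\to 0]{} \Pi_{(T_{x}M)^{\perp}}\big(H(x)\big)\,.
\]

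The final step — the only non-formal input, and hence what I would single out as the crux — is to recognize that this limit is exactly $H(x)$. This is where the hypothesis that $V$ is \emph{integral} is used, via Brakke's perpendicularity theorem (cf.\ \cite{brakke}): for an integral varifold with locally bounded first variation the generalized mean curvature vector lies in the normal space, $H(x)\in (T_{x}M)^{\perp}$ for $\cH^{d}$--almost every $x\in M$, so that $\Pi_{(T_{x}M)^{\perp}}(H(x)) = H(x)$. Combining this with the previous display gives $H_{\rho,\xi,\e}^{V,\perp}(x)\to H(x)$ for $\cH^{d}$--almost every $x\in M\cap\Omega$. I do not expect a genuine obstacle here: beyond invoking Brakke's theorem with the correct hypotheses (integrality plus bounded first variation, exactly as assumed), the only points deserving a line of care are the identification of $\nu_{x}$ for rectifiable varifolds and the fact that one is applying the fixed projector $\Pi_{(T_{x}M)^{\perp}}$, not an $\e$-dependent one.
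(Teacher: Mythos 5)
Your proposal is correct and follows essentially the same route as the paper: identify $\nu_{x}=\delta_{T_{x}M}$ so that $H^{V,\perp}_{\rho,\xi,\e}(x)=\Pi_{(T_{x}M)^{\perp}}\big(H^{V}_{\rho,\xi,\e}(x)\big)$, invoke Brakke's perpendicularity theorem for integral varifolds to get $\Pi_{(T_{x}M)^{\perp}}H(x)=H(x)$, and conclude from Theorem~\ref{thm:convergence1} using that the projection is $1$-Lipschitz (the paper phrases this as the single estimate $|H^{V,\perp}_{\rho,\xi,\e}(x)-H(x)|\le |H^{V}_{\rho,\xi,\e}(x)-H(x)|\to 0$, which is the same argument).
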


\begin{proof}

As $V$ is integral, we know from a result of Brakke \cite{brakke} that $H(x) \perp T_x M$ for $\cH^{d}$--almost every $x\in M\cap \Omega$. Thanks to Theorem~\ref{thm:convergence1}, for $\cH^{d}$--a. e. $x\in M\cap\Omega$ we conclude that 
\begin{align*}
\left| H_{\rho,\xi,\e}^{V, \perp} (x) - H(x) \right| & = \left| \Pi_{T_x M^\perp} \left( H_{\rho,\xi,\e}^V (x) \right) - \Pi_{T_x M^\perp} H(x) \right| \\
& \leq \left| H_{\rho,\xi,\e}^V (x) -  H(x) \right| \xrightarrow[\e \to 0]{} 0\,.
\end{align*}

\end{proof}

The orthogonal approximate mean curvature introduced in Definition~\ref{dfn_modifiedMC} is very sensitive to the pointwise estimate of the tangent space $T_x M$ at $x$. Therefore, it is not possible to use it for generalizing Theorem~\ref{thm:convergence2}, unless we know that $\nu_x^i$ is close enough to $\nu_x = \delta_{T_x M}$. Indeed, under this stronger assumption (see \eqref{eq_thm_pointwise_cvSmooth_hyp2} and \eqref{eq_thm_pointwise_cvSmooth_hyp3} below) we recover the following pointwise convergence result with a substantially improved convergence rate. 

\begin{theo}[Convergence III]\label{thm:convergence3}
Let $\Omega \subset \R^n$ be an open set, $M \subset \Omega$ be a $d$--dimensional submanifold of class $\xC^2$ without boundary, and let $V = v(M,1)$ be the rectifiable $d$--varifold in $\Omega$ associated with $M$, with multiplicity $1$. Let us extend $T_y M$ to a $\xC^1$ map $\widetilde{T_y M}$ defined in a tubular neighbourhood of $M$. Let $(V_i)_i$ be a sequence of $d$--varifolds in $\Omega$. Let $(\rho, \xi)$ satisfies Hypothesis~\ref{rhoepsxieps}.
Let $x \in M$ and let $(z_i)_i \subset \Omega$ be a sequence tending to $x$ and such that $z_i \in \supp \| V_i\|$. Assume that there exist positive, decreasing and infinitesimal sequences $(\eta_i)_i, \,(d_{1,i})_i, \, (d_{2,i})_i, \, (\e_i)_i $, such that for any ball $B\subset\Omega$ centered in $\supp \V$ and contained in a neighbourhood of $x$, one has
\begin{equation} \label{eq_thm_pointwise_cvSmooth_hyp2}
\Delta^{1,1}_B(\V,\|V_i\|) \leq d_{1,i} \min \left( \V(B^{\eta_i}) , \|V_i\|(B^{\eta_i}) \right) \: ,
\end{equation}
and, recalling the decomposition $V_i = \| V_i \| \otimes \nu_x^i$,
\begin{equation} \label{eq_thm_pointwise_cvSmooth_hyp3}
\sup_{ \{y \in B_{\epsilon_i + |x-z_i|} (x) \cap \supp  \| V_i \| \} } \int_{S \in \G} \| \widetilde{T_y M} - S \| \, d \nu_y^i (S) \leq d_{2,i} \: .
\end{equation}
Then, there exists $C >0$ such that
\begin{equation}
\left| H_{\rho,\xi,\epsilon_i}^{V_i,\perp} (z_i) - H_{\rho,\xi,\epsilon_i}^{V,\perp}(x) \right| \leq C \frac{d_{1,i} + d_{2,i} + |x-z_i|}{\epsilon_i} \,.
\end{equation}
Moreover, if we also assume that $d_{1,i}+d_{2,i}+\eta_{i}+|x-z_{i}| = o(\e_{i})$ as $i\to\infty$, then
\begin{equation*}
H_{\rho,\xi,\epsilon_i}^{V_i,\perp} (z_i) \xrightarrow[i \to \infty]{} H(x) \,.
\end{equation*}
\end{theo}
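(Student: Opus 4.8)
The plan is to reduce the statement to an estimate for the \emph{orthogonally projected} regularized first variation, and to exploit the single feature that distinguishes this result from Theorem~\ref{thm:convergence2}: since $x\in M$ and $V=v(M,1)$, projecting onto $T_xM^\perp$ annihilates the leading tangential part of the integrand. Indeed $\Pi_{T_xM^\perp}\Pi_{T_yM}=\Pi_{T_xM^\perp}(\Pi_{T_yM}-\Pi_{T_xM})$, and $\|\Pi_{T_yM}-\Pi_{T_xM}\|\le C|y-x|$ by the $\xC^2$ regularity of $M$; on the support of $\rho_{\e_i}(\cdot-z_i)$ this produces an extra factor $\e_i$, which is exactly what upgrades the rate from $\e_i^{-2}$ to $\e_i^{-1}$.

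First I would rewrite both quantities. Since $\nu_x=\delta_{T_xM}$ one has $H_{\rho,\xi,\e_i}^{V,\perp}(x)=\Pi_{T_xM^\perp}H_{\rho,\xi,\e_i}^{V}(x)$, while $H_{\rho,\xi,\e_i}^{V_i,\perp}(z_i)=A_i\,H_{\rho,\xi,\e_i}^{V_i}(z_i)$ with $A_i:=\int_{\G}\Pi_{P^\perp}\,d\nu_{z_i}^{i}(P)$, so that
\[
H_{\rho,\xi,\e_i}^{V_i,\perp}(z_i)-H_{\rho,\xi,\e_i}^{V,\perp}(x)=(A_i-\Pi_{T_xM^\perp})\,H_{\rho,\xi,\e_i}^{V_i}(z_i)+\Pi_{T_xM^\perp}\big(H_{\rho,\xi,\e_i}^{V_i}(z_i)-H_{\rho,\xi,\e_i}^{V}(x)\big).
\]
For the first summand, evaluating \eqref{eq_thm_pointwise_cvSmooth_hyp3} at $y=z_i\in\supp\|V_i\|$ and using Jensen's inequality gives $\|A_i-\Pi_{\widetilde{T_{z_i}M}^\perp}\|\le d_{2,i}$, while the $\xC^1$ regularity of $y\mapsto\widetilde{T_yM}$ and $\widetilde{T_xM}=T_xM$ give $\|\Pi_{\widetilde{T_{z_i}M}^\perp}-\Pi_{T_xM^\perp}\|\le C|x-z_i|$; hence $\|A_i-\Pi_{T_xM^\perp}\|\le d_{2,i}+C|x-z_i|$. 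One then only needs the crude bound $|H_{\rho,\xi,\e_i}^{V_i}(z_i)|\le C/\e_i$: the numerator obeys $|\delta V_i\ast\rho_{\e_i}(z_i)|\le\|\rho'\|_{\infty}\,\e_i^{-n-1}\|V_i\|(B_{\e_i}(z_i))\le C\e_i^{d-n-1}$ (estimating $\|V_i\|(B_{\e_i}(z_i))$ by $\V$ on a slightly larger ball through \eqref{eq_thm_pointwise_cvSmooth_hyp2} and the density of $\V$ at $x$), and the denominator obeys $\|V_i\|\ast\xi_{\e_i}(z_i)\ge c\,\e_i^{d-n}$ for $i$ large, by Lemma~\ref{lemma_technical_pointwise_convergence}~\eqref{eq_lemma_masszi} (the limiting constant $\omega_d^{-1}C_\xi$ being positive). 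So the first summand is $\le C(d_{2,i}+|x-z_i|)/\e_i$.

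The core of the argument is the second summand. Writing $\delta W\ast\rho_{\e_i}(w)=\e_i^{-n-1}\int\Pi_S\nabla\rho_1((y-w)/\e_i)\,dW(y,S)$, I would bound $\big|\Pi_{T_xM^\perp}(\delta V_i\ast\rho_{\e_i}(z_i)-\delta V\ast\rho_{\e_i}(x))\big|$ by passing from $V_i$ to $\|V_i\|\otimes\delta_{\widetilde{T_\cdot M}}$, then to $\V\otimes\delta_{\widetilde{T_\cdot M}}$, and finally to $V=\V\otimes\delta_{T_\cdot M}$. Replacing $\nu_y^i$ by $\delta_{\widetilde{T_yM}}$ costs $\le\|\rho'\|_{\infty}\,d_{2,i}\,\|V_i\|(B_{\e_i}(z_i))\le Cd_{2,i}\e_i^{d}$, by \eqref{eq_thm_pointwise_cvSmooth_hyp3}. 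Replacing $\|V_i\|$ by $\V$ is where the projection pays off: the field $y\mapsto\Pi_{T_xM^\perp}\Pi_{\widetilde{T_yM}}\nabla\rho_1((y-z_i)/\e_i)$ is the product of the $O(|y-x|)$-small, $O(1)$-Lipschitz matrix field $\Pi_{T_xM^\perp}(\Pi_{\widetilde{T_yM}}-\Pi_{T_xM})$ with the bounded, $O(1/\e_i)$-Lipschitz field $\nabla\rho_1((\cdot-z_i)/\e_i)$ supported in $B_{\e_i}(z_i)$, hence it is simultaneously $O(\e_i)$ in sup-norm \emph{and} $O(1)$-Lipschitz; consequently \eqref{eq_thm_pointwise_cvSmooth_hyp2} together with the $\Delta^{1,1}$ comparison \eqref{eq_recall_lipschitz_convergence_with_holder_condition} costs only $\le Cd_{1,i}\,\V(B_{\e_i+\eta_i}(z_i))\le Cd_{1,i}\e_i^{d}$, instead of the $d_{1,i}\e_i^{d-1}$ that a naive estimate would give. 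Finally the recentering from $z_i$ to $x$ costs, using $\widetilde{T_yM}=T_yM$ on $M$, $\|\Pi_{T_xM^\perp}\Pi_{T_yM}\|\le C|y-x|$ on $M$ near $x$, and the Lipschitz control on $\nabla\rho_1((\cdot-z_i)/\e_i)-\nabla\rho_1((\cdot-x)/\e_i)$, at most $\le\int_{M\cap B_{\e_i+|x-z_i|}(x)}C|y-x|\,\|\rho\|_{\xW^{2,\infty}}\e_i^{-1}|x-z_i|\,d\cH^d(y)\le C|x-z_i|\e_i^{d}$. Summing, $\big|\Pi_{T_xM^\perp}(\delta V_i\ast\rho_{\e_i}(z_i)-\delta V\ast\rho_{\e_i}(x))\big|\le C(d_{1,i}+d_{2,i}+|x-z_i|)\e_i^{d-n-1}$. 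Combining this with the lower bound $\|V_i\|\ast\xi_{\e_i}(z_i)\ge c\,\e_i^{d-n}$, the bound $|\Pi_{T_xM^\perp}\delta V\ast\rho_{\e_i}(x)|\le C\e_i^{d-n}$ (again from $\|\Pi_{T_xM^\perp}\Pi_{T_yM}\|\le C|y-x|$ on $M$), and the estimate $|\V\ast\xi_{\e_i}(x)-\|V_i\|\ast\xi_{\e_i}(z_i)|\le C(d_{1,i}+|x-z_i|)\e_i^{d-n-1}$ from Lemma~\ref{lemma_technical_pointwise_convergence}~\eqref{eq_lemma_Vzi}, and splitting into a common-denominator term and a difference-of-reciprocals term exactly as in \eqref{eq_proof_pointwise_approximate_mean_curvature_1}, one obtains $\big|\Pi_{T_xM^\perp}(H_{\rho,\xi,\e_i}^{V_i}(z_i)-H_{\rho,\xi,\e_i}^{V}(x))\big|\le C(d_{1,i}+d_{2,i}+|x-z_i|)/\e_i$.

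Adding the two summands gives the claimed rate. For the final assertion one writes $H_{\rho,\xi,\e_i}^{V_i,\perp}(z_i)-H(x)$ as the sum of $H_{\rho,\xi,\e_i}^{V_i,\perp}(z_i)-H_{\rho,\xi,\e_i}^{V,\perp}(x)$, which is $O\big((d_{1,i}+d_{2,i}+|x-z_i|)/\e_i\big)=o(1)$ by the rate estimate and the hypothesis $d_{1,i}+d_{2,i}+\eta_i+|x-z_i|=o(\e_i)$, and $H_{\rho,\xi,\e_i}^{V,\perp}(x)-H(x)$, which tends to $0$ by Proposition~\ref{prop_consistency_modifiedMC} (here $V=v(M,1)$ is integral and $M$ is $\xC^2$ without boundary, so $\delta V$ has no singular part). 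The hard part is the second summand: for each of the three replacements one must carefully balance sup-norm smallness against Lipschitz oscillation of the relevant integrand so that the extra factor $\e_i$ coming from the projection is not consumed by the $\e_i^{-1}$ in the Lipschitz constant of $\nabla\rho_1((\cdot-z_i)/\e_i)$. A minor technical point is that the density and lower-mass bounds invoked throughout (in particular Lemma~\ref{lemma_technical_pointwise_convergence}~\eqref{eq_lemma_masszi}) tacitly require $\eta_i$, $d_{1,i}$, $|x-z_i|$ to be at most of order $\e_i$, which is automatic in the regime relevant to the convergence conclusion.
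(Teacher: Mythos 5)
Your proposal is correct and takes essentially the same route as the paper: the decisive cancellation $\Pi_{T_xM^\perp}\circ\Pi_{T_xM}=0$ (the paper uses $\Pi_{\widetilde{T_{z_i}M}^\perp}\circ\Pi_{\widetilde{T_{z_i}M}}=0$ in its Lipschitz bound for the test field $\phi_i$) is exactly what keeps the integrands $O(1)$-Lipschitz and $O(\e_i)$-small so that the $\Delta^{1,1}$ hypothesis costs only $d_{1,i}\e_i^{d}$, your three replacements correspond to the paper's terms $a_i,b_i,c_i$, and the mass comparison and final convergence are handled through Lemma~\ref{lemma_technical_pointwise_convergence} \eqref{eq_lemma_Vzi}--\eqref{eq_lemma_masszi} and Proposition~\ref{prop_consistency_modifiedMC} just as in the paper. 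The only deviations are cosmetic: you pull the $\nu^i_{z_i}$-averaged projection out as a matrix acting on $H^{V_i}_{\rho,\xi,\e_i}(z_i)$ (bounded crudely by $C/\e_i$) instead of keeping it inside the integral as in the paper's $b_i$, you bound $|\Pi_{T_xM^\perp}\delta V\ast\rho_{\e_i}(x)|$ directly rather than via Proposition~\ref{prop_consistency_modifiedMC}, and your flagged tacit smallness requirements on $\eta_i,d_{1,i},|x-z_i|$ mirror the paper's own implicit use of \eqref{eq_lemma_masszi}.
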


\begin{proof}
Let us set 
\begin{equation*}
a_{i} := \frac{1}{\e_{i}} \left|  \int_{y \in \Omega } \Pi_{\widetilde{T_{z_i} M}^\perp} \circ \Pi_{\widetilde{T_y M}}  \nabla \rho_1 \left( \frac{y-z_i}{\e_{i}} \right) \Big[ d \V(y) -  d  \|V_i \|(y) \Big] \right|\,,
\end{equation*}
\begin{align*}
b_{i} &:=  \frac{1}{\e_{i}}\left| \int_{y \in B_{\e_{i}}(z_i) }  \Pi_{\widetilde{T_{z_i} M}^\perp} \circ \Pi_{\widetilde{T_y M}} \nabla \rho_1 \left( \frac{y-z_i}{\e_{i}} \right)  d \|V_i \|(y) \right. \\ &\qquad \left. - \int_{(y,S) \in B_{\e_{i}}(z_i) \times \G} \int_{P \in \G} \Pi_{P^\perp} \circ \Pi_S  \nabla \rho_1 \left( \frac{y-z_i}{\e_{i}} \right) \, d\nu_{z_i}^i (P) dV_i(y,S) \right|\,,
\end{align*}
\begin{align*}
c_{i} := \frac{1}{\e_{i}}  \int_{y \in \Omega } \left|  \Pi_{\widetilde{T_{x} M}^\perp} \circ \Pi_{\widetilde{T_y M}}  \nabla \rho_1 \left( \frac{y-x}{\e_{i}} \right) - \Pi_{\widetilde{T_{z_i} M}^\perp} \circ \Pi_{\widetilde{T_y M}}  \nabla \rho_1 \left( \frac{y-z_i}{\e_{i}} \right)\right| d \V(y)\,.
\end{align*}
By definition of orthogonal approximate mean curvature, we have
\begin{align}\notag
\left| H_{\rho,\xi,\e_i}^{V,\perp} (x) - H_{\rho,\xi,\e_i}^{V_i,\perp} (z_i) \right| & = \frac{C_\xi}{C_\rho} \frac{a_{i} + b_{i} + c_{i}}{\e_i^n \|V_i\| \ast \xi_{\e_i} (z_i)} \\\label{eq:diffortappmc}
&\qquad\qquad + \frac{\left| H_{\rho,\xi,\e_i}^{V,\perp} (x) \right|}{\e_i^n \|V_i\| \ast \xi_{\e_i}(z_i)} \Big| \V \ast \xi_{\e_i} (x) - \|V_i\| \ast \xi_{\e_i} (z_i) \Big|\,.
\end{align}
Let $\phi_{i}:\R^{n}\to \R^{n}$ be the map defined as
\[ 
\phi_{i}(y) = \Pi_{\widetilde{T_{z_i} M}^\perp} \circ  \Pi_{\widetilde{T_y M}} \displaystyle \nabla \rho_1 \left( \frac{y-z_i}{\e_{i}} \right) \: .
\]
Then, $\phi_{i}$ is Lipschitz and for $y,w \in \R^n$, if $y, w \in \R^n \setminus B_{\e_{i}}(z_i)$ then $\phi_{i}(y) = \phi_{i}(w) =0$. Thus, assuming that $w \in B_{\e_{i}}(z_i)$, one has
\begin{align*}
| \phi_{i}(y) - \phi_{i}(w) | & \leq \left| \Pi_{\widetilde{T_{z_i} M}^\perp} \circ \left( \Pi_{\widetilde{T_y M}} - \Pi_{\widetilde{T_w M}} \right) \nabla \rho_1 \left( \frac{y-z_i}{\e_{i}} \right) \right|\\
& + \left| \Pi_{\widetilde{T_{z_i} M}^\perp} \circ \left( \Pi_{\widetilde{T_w M}} - \Pi_{\widetilde{T_{z_i} M}} \right) \left[ \nabla \rho_1 \left( \frac{y-z_i}{\e_{i}} \right) - \nabla \rho_1 \left( \frac{w-z_i}{\e_{i}} \right) \right] \right| \\
& \leq \| \widetilde{T_y M} - \widetilde{T_w M} \| \| \nabla \rho_1 \|_{\infty} + \| \widetilde{T_w M} - \widetilde{T_{z_i} M} \| \xlip (\nabla \rho_1) \frac{|y-w|}{\e} \\
& \leq \xlip(\widetilde{T_\cdot M}) \|  \rho^\prime \|_{\infty} |y-w|  + \xlip(\widetilde{T_\cdot M})  \frac{|w-z_i|}{\e_{i}} \xlip (\rho^\prime) |y-w| \\
& \leq \xlip(\widetilde{T_\cdot M})  \| \rho \|_{\xW^{2,\infty}} |y-w| \: .
\end{align*}
where the first inequality follows from the identity $\Pi_{\widetilde{T_{z_i} M}^\perp} \circ \Pi_{\widetilde{T_{z_i} M}} = 0$, and the last one since $|w-z_i| \leq \e_{i}$. Moreover $\phi_{i}$ is uniformly bounded by $ \|  \rho^\prime \|_\infty$ and supported in $B_{\e_{i} + |x-z_i|} (x)$, hence by \eqref{eq_thm_pointwise_cvSmooth_hyp2} we obtain
\begin{align}\notag
a_{i} &= \frac{1}{\e_{i}} \left| \int \phi_{i}(y) \, d \| V_i \|(y) - \int \phi_{i}(y) \, d \V(y) \right| \\\notag 
&\leq \frac{1}{\e_{i}} \left( 1+ \xlip(\widetilde{T_\cdot M}|_{B_{\e_{i} + |x-z_i|}(x)}) \right) \| \rho \|_{\xW^{2,\infty}} \Delta^{1,1}_{B_{\e_{i} + |x-z_i| } (x)} (\|V_i\|,\V) \\\label{eq_proof_smoothCV_1}
&\le \frac{1}{\e_{i}} \left( 1+ \xlip(\widetilde{T_\cdot M}|_{B_{\e_{i} + |x-z_i|}(x)}) \right) \| \rho \|_{\xW^{2,\infty}} d_{1,i} \|V\|(B_{\e + |x-z_i| + \eta_i}(x))\,. 
\end{align}

Then, for $P,S \in \G$ we have
\begin{align*}
\left\| \Pi_{\widetilde{T_{z_i} M}^\perp} \circ \Pi_{\widetilde{T_y M}} - \Pi_{P^\perp} \circ \Pi_S \right\| & \leq \Big\| {\widetilde{T_{z_i} M}^\perp} \Big\| \Big\| {\widetilde{T_y M}} - S \Big\| + \Big\| {\widetilde{T_{z_i} M}^\perp} - {P^\perp} \Big\| \| S \| \\
& \leq \left\| {\widetilde{T_y M}} - S \right\| + \left\| {\widetilde{T_{z_i} M}} - P \right\| \: ,
\end{align*}
hence we obtain
\begin{align}\notag
b_{i} &\leq \frac{1}{\e} \| \rho \|_{\xW^{1,\infty}} \int_{y \in B_{\e_{i}}(z_i)} \int_{(P,S) \in \G \times \G}\left( \left\| {\widetilde{T_y M}} - S \right\| + \left\| {\widetilde{T_{z_i} M}} - P \right\| \right)  \, d\nu_{z_i}^i (P) d \nu_y^i (S) \, d \| V_i \|(y) \\ \label{eq_proof_smoothCV_2}
&\leq \| \rho \|_{\xW^{1,\infty}} \frac{d_{2,i}}{\e} \| V_i \| (B_{\e_i + |x-z_i|}(x))\,, 
\end{align}
also owing to \eqref{eq_thm_pointwise_cvSmooth_hyp3}. 

By similar computations as those leading to \eqref{eq_proof_smoothCV_1}, the map $\psi_{i}:\R^{n}\to\R^{n}$ defined by
\[ 
\psi_{i}(z) = \Pi_{\widetilde{T_z M}^\perp} \circ  \Pi_{\widetilde{T_y M}} \displaystyle \nabla \rho_1 \left( \frac{y-z}{\e_{i}} \right) 
\]
satisfies $\xlip(\psi_{i}) \leq \xlip(\widetilde{T_\cdot M})  \| \rho \|_{\xW^{2,\infty}}$. Therefore,
\begin{equation} \label{eq_proof_smoothCV_3}
c_{i} \leq \frac{1}{\e_{i}} \xlip(\widetilde{T_\cdot M}|_{B_{\e_{i} + |x-z_i|}(x)}) \| \rho \|_{\xW^{2,\infty}} |x-z_i| \|V\|(B_{\e + |x-z_i|}(x)) \: .
\end{equation}

In conclusion, by plugging \eqref{eq_proof_smoothCV_1}, \eqref{eq_proof_smoothCV_2} and \eqref{eq_proof_smoothCV_3} into \eqref{eq:diffortappmc}, and owing to 
Lemma~\ref{lemma_technical_pointwise_convergence} \eqref{eq_lemma_Vzi}-\eqref{eq_lemma_masszi} and Proposition~\ref{prop_consistency_modifiedMC}, one has for $i$ large enough that
\begin{align*}
\left| H_{\rho,\xi,\e_i}^{V,\perp} (x) - H_{\rho,\xi,\e_i}^{V_i,\perp} (z_i) \right| & \leq \frac{C_\xi}{C_\rho} \frac{2 \|V\|(B_{\e_i + |x-z_i| + \eta_i}(x))}{\e_i^n \|V_i\| \ast \xi_{\e_i} (z_i)} \frac{d_{1,i} + d_{2,i} + |x-z_i|}{\epsilon_i} \\
& + \frac{ (|H (x)|+1) }{\e_i^n \|V_i\| \ast \xi_{\e_i}(z_i)} \frac{d_{1,i} \V (B_{\e_i + |x-z_i| + \eta_i}(x)) + |x-z_i| \V(B_{\e_i + |x-z_i|}(x)) }{\epsilon_i} \\
& = O \left(\frac{d_{1,i} + d_{2,i} + |x-z_i|}{\epsilon_i} \right) \: ,
\end{align*}
which concludes the proof.

\end{proof}

\section{Natural kernel pairs}\label{section:pairs}

In previous sections we have considered generic pairs $(\rho,\xi)$ of kernel profiles, as introduced in Section \ref{section:pre} and further specified in Hypothesis \ref{rhoepsxieps}. One might ask whether or not some special choice of kernel pairs could lead to better convergence rates than those proved in Theorems \ref{thm:convergence2} and \ref{thm:convergence3}. Although the pair $(\rho,\rho)$ seems quite natural, as it allows for instance some algebraic simplifications in the formula for the $\e$-mean curvature for a point cloud varifold (see also \cite{brakke}, where $\rho=\xi=$ heat kernel profile), from the point of view of numerical convergence rates there is a better choice. We thus propose a different criterion for selecting the pair $(\rho,\xi)$, that is related to what we define as the \textit{natural kernel pair} property, or shortly (NKP), see Definition \ref{dfn:NKP}. A heuristic justification of the (NKP) property is provided by the analytic computations presented below. 
\begin{dfn}[Natural Kernel Pair]\label{dfn:NKP}
We say that $(\rho, \xi)$ is a natural kernel pair, or equivalently that it satisfies the (NKP) property, if it satisfies Hypothesis~\ref{rhoepsxieps} and 
\begin{equation}\label{NKP}
\xi(s) = -\frac{s\rho'(s)}{n}\qquad \text{for all }s\in (0,1)\,.
\end{equation}
\end{dfn}
Even though it is not clear whether the (NKP) property may produce improved convergence rates in the previously mentioned theorems, we shall see in Section \ref{section:numerics} its experimental validation. In particular, all the tests that we have performed showed a significantly augmented convergence and robustness, even in presence of noise. 

We now sketch the argument leading to Definition \ref{dfn:NKP}. Given $1\le d<n$ and $\rho, \xi$ as in Hypothesis~\ref{rhoepsxieps} we set
\[
C_{\rho,\xi} =  \frac{\int_{0}^{1}\rho(t)\, t^{d-1}\, dt}{\int_{0}^{1}\xi(t)\, t^{d-1}\, dt} =  \frac{C_\rho}{C_\xi}\,.
\]
We fix a $d$--dimensional submanifold $M\subset \R^{n}$ of class $C^{3}$ and define the associated varifold $V = v(M,1)$. Then we perform a Taylor expansion of the difference $H_{\rho,\xi,\e}^V (x) - H(x)$ at a point $x \in M$ (here $H(x)$ denotes the classical mean curvature of $M$ at $x$). By focusing on the expression of the constant term of this expansion, which must be $0$ because of Theorem~\ref{thm:convergence1}, we notice that such an expression (see \eqref{eq_constantTerm}) is proportional to 
\[
\int_{0}^{1} \left(s\rho'(s) + d\, C_{\rho,\xi}\,\xi(s) \right) s^{d-1}\, ds \,.
\]
On one hand, this integral is $0$ for any kernel pair $(\rho,\xi)$, as shown through an integration by parts coupled with the definition of the constant $C_{\rho,\xi}$. On the other hand we might want to strengthen the nullity of the integral by additionally requiring the nullity of the integrand. This precisely amounts to require \eqref{NKP} and thus leads to the definition of the (NKP) property.
\medskip

We now give more details on the argument sketched above.

Let $M$ be as above and assume that $0\in M$ and that $T_{0}M \simeq \{x=(y,0)\in \R^{n}:\ y = (y_{1},\dots,y_{d})\}$. Of course this is always the case up to an isometry. Then $M$ is locally the graph of a smooth function $u:A\to \R^{n-d}$, where $A$ is a neighbourhood of $0\in\R^{d}$. Clearly our assumptions imply that $u(0) = 0$ and $\nabla u(0) = 0$, hence 
\begin{equation}\label{uipsilon}
u(y) = \frac 12\langle \nabla^{2}u(0) y,y\rangle + o(|y|^{2})\,,\qquad \nabla u(y) = \nabla^{2}u(0)y + o(|y|)\,.
\end{equation}
For $r>0$ small enough we consider the sphere $S_{r} = \partial B_{r}$ and set $M_{r} = M\cap S_{r}$. For any $x\in M_{r}$ we let $y=y(x)\in \R^{d}$ be the vector of the first $d$ coordinates of $x$. We then have $x = (y,u(y))$ and $|y|^{2} + |u(y)|^{2} = |x|^{2}$. Note that by \eqref{uipsilon} we also have $|y| = |x| + o(|x|^{2})$. Let $\{\nu_{1},\dots,\nu_{n-d}\}$ be the standard basis of the orthogonal space $(T_{0}M)^{\perp}\subset \R^{n}$, so that we have $u(y) = \sum_{j=1}^{n-d} u_{j}(y)\, \nu_{j}$. Whenever $x=(y,u(y))\in M$ is close enough to the origin, there exists an orthogonal basis $\{\hv_{1},\ldots,\hv_{n-d}\}$ of $T_{x}^{\perp}M$, such that 
\[
\hv_{j} = -\nabla u_{j}(y) + \nu_{j} + O(|y|^{2})\,.
\]
Consequently, by noting that $|x| = O(|y|)$, the projection of $x$ onto the tangent space $T_{x}M$ satisfies the following relation:
\begin{align*}
x^{M} &= x - \sum_{j=1}^{n-d} \langle x, -\nabla u_{j}(y) + \nu_{j}\rangle\cdot \frac{-\nabla u_{j}(y) + \nu_{j}}{1 + |\nabla u_{j}(y)|^{2}} + O(|y|^{3})\\ 
&= y + \sum_{j=1}^{n-d} u_{j}(y)\nu_{j}+\Big(\langle \nabla u_{j}(y),y\rangle - u_{j}(y)\Big) \frac{-\nabla u_{j}(y) + \nu_{j}}{1 + |\nabla u_{j}(y)|^{2}} + O(|y|^{3})\,.
\end{align*}
It is then easy to check that 
\begin{equation}\label{xprocirca}
x^{M} = y +\sum_{j=1}^{n-d} \langle D^{2}u_{j}(0)y,y\rangle \nu_{j} + O(|y|^{3})\,.
\end{equation} 

We introduce some extra notation. Given $r>0$ sufficiently small, we denote by $W_{r}$ the projection of $M_{r}$ onto $T_{0}M$, that is,
\begin{equation}\label{Werre}
W_{r} = \{y:\ |y|^{2} + |u(y)|^{2} = r^{2}\}\,.
\end{equation}
We will now prove that $W_{r}$ is a small deformation of a $d$-sphere of radius $r$, with explicit estimates as $r\to 0$. We thus set $\Sigma_{r} = \{z\in \R^{d}:\ |z|=r\}$ and define $f_{r}:\Sigma_{r}\to W_{r}$ as $f_{r}(z) = (1+\phi_{r}(z)) z$, such that the implicit relation 
\begin{equation}\label{implicita1}
(1+\phi_{r}(z))^{2} -1 + \sum_{j=1}^{n-d} \frac{u_{j}^{2}((1+\phi_{r}(z))z)}{r^{2}} = 0
\end{equation}
is satisfied. Thanks to the implicit function theorem, \eqref{implicita1} defines
$\phi_{r}(z)$ and thus $f_{r}(z)$ when $r>0$ is small enough. Moreover, by noticing that $|u(y)| = O(r^{2})$ thanks to \eqref{Werre}, one infers from \eqref{implicita1} that  
\begin{equation}\label{phicircazero}
\phi_{r}(z) = O(r^{2})\,.
\end{equation}
Now we estimate the difference $H^{V}_{\rho,\xi,\e}(0) - H(0)$. We first recall that $H(0) = \sum_{j=1}^{n-d} \Delta u_{j}(0)\, \nu_{j}$. Letting $V = \mathbf{v}(M,1)$ we have
\begin{align*}
H^{V}_{\rho,\xi,\e}(0) - H(0) &= \frac{1}{C_{\rho,\xi}} \frac{\delta V \ast \rho_\epsilon (0)}{\V \ast \xi_\epsilon (0)} + H(0)\\ 
&= \frac{\delta V \ast \rho_\epsilon (0) + C_{\rho,\xi}\V \ast \xi_\epsilon (0)H(0)}{C_{\rho,\xi}\V \ast \xi_\epsilon (0)} \\
&= \frac{\int_{M} \left(\e^{-n-1} \rho'(|x|/\e) \frac{x^{M}}{|x|} + C_{\rho,\xi}\, \e^{-n}\xi(|x|/\e) H(0)\right)\, d\cH^{d}(x)}{C_{\rho,\xi}\V \ast \xi_\epsilon (0)}\\
&= \frac{A}{\e^{n}C_{\rho,\xi}\V \ast \xi_\epsilon (0)}\,.
\end{align*}
Let us apply the coarea formula and rewrite the term $A$ above as follows:
\begin{align*}
A &= \int_{0}^{\e} \int_{W_{r}} \left(\frac{\rho'(r/\e)}{\e}x^{M} + C_{\rho,\xi} H(0)r\xi(r/\e)\right)|x^{M}|^{-1}\, d\cH^{d-1}(x)\, dr\,.
\end{align*}
We then apply the area formula using the map 
\[
x= g_{r}(z) = (f_{r}(z),u(f_{r}(z)))\,,\qquad  f_{r}(z) = z + O(r^{2})
\]
and obtain
\begin{align}\label{formulaA}
A &= \int_{0}^{\e} \int_{\Sigma_{r}} \left(\frac{\rho'(r/\e)}{\e}x^{M} + C_{\rho,\xi} H(0)r\,\xi(r/\e)\right)|x^{M}|^{-1}\, Jg_{r}(z)\, d\cH^{d-1}(z)\, dr\,,
\end{align}
where $Jg_{r}$ denotes the tangential Jacobian of $g_{r}$ (here we are assuming $d\ge 2$, otherwise if $d=1$ then $Jg_{r}=1$ and the subsequent computations are even simpler). It is now convenient to identify $z$ with the point of $\R^{n}$ whose first $d$ coordinates are, respectively, $z_{1},\ldots, z_{d}$. Now we write $\phi_{r}$ instead of $\phi_{r}(z)$ for more brevity, so that 
\[
g_{r}(z) = (1+\phi_{r})z + \sum_{j} u_{j}((1+\phi_{r})z)\, \nu_{j}\,.
\]
Now we fix $z\in \Sigma_{r}\subset \R^{d}$ and choose a unit tangent vector $v\in T_{z}\Sigma_{r}$. By differentiating $g_{r}$ at $z$ along the direction $v$, and taking into account \eqref{phicircazero}, we get
\begin{align}\nonumber
\partial_{v}g_{r}(z) &= \partial_{v} \phi_{r}(z) z + (1+\phi_{r})v + \sum_{j} [\partial_{v}\phi_{r}\, \partial_{z} u_{j}((1+\phi_{r})z) + (1+\phi_{r})\partial_{v}u_{j}((1+\phi_{r})z)]\, \nu_{j} \\\label{vOr}
&= (1+O(r^{2}))v + \sum_{j} O(r)\nu_{j}\,.
\end{align}
Then one can fix an orthonormal basis $\{v_{1},\ldots,v_{d-1}\}$ for $T_{z}\Sigma_{r}$, then by \eqref{vOr} one finds
\begin{equation}\label{stimaJgr}
Jg_{r}(z) = |\partial_{v_{1}}g_{r}(z)\wedge \ldots \wedge \partial_{v_{d-1}}g_{r}(z)| = \sqrt{1 + O(r^{2})} = 1+O(r^{2})\,.
\end{equation}
Moreover by \eqref{xprocirca} combined with \eqref{phicircazero} one obtains 
\begin{align*}
x^{M} &= (1+\phi_{r}) z + (1+\phi_{r})^{2}\sum_{j=1}^{n-d} \langle D^{2}u_{j}(0)z,z\rangle\, \nu_{j} + O(r^{3})\\ 
&= z + \sum_{j=1}^{n-d} \langle D^{2}u_{j}(0)z,z\rangle\nu_{j} + O(r^{3})\,,
\end{align*}
hence by exploiting the fact that $\nu_{j}\perp z$ we have $|x^{M}| = \sqrt{r^{2} + O(r^{4})} = r(1+O(r^{2}))$. Plugging this last estimate and \eqref{stimaJgr} into \eqref{formulaA} we find
\begin{align}\label{formulaA2}
A &= \int_{0}^{\e} \int_{\Sigma_{r}} \left(\frac{\rho'(r/\e)}{r\e}\Big(z+ \sum_{j=1}^{n-d} \langle D^{2}u_{j}(0)z,z\rangle\nu_{j} + O(r^{3})\Big) + C_{\rho,\xi} H(0)\,\xi(r/\e)\right)\Big(1+O(r^{2})\Big)\, d\cH^{d-1}(z)\, dr\,.
\end{align}
Now let us focus on the term in the expansion of the right-hand side of \eqref{formulaA2}, whose expression contains neither $O(r^{2})$ nor $O(r^{3})$:
\begin{align*}
A' &= \int_{0}^{\e} \int_{\Sigma_{r}} \left(\frac{\rho'(r/\e)}{r\e}\Big(z+ \sum_{j=1}^{n-d} \langle D^{2}u_{j}(0)z,z\rangle\nu_{j}\Big) + C_{\rho,\xi} H(0)\,\xi(r/\e)\right)\, d\cH^{d-1}(z)\, dr\\
&= \int_{0}^{\e} \int_{\Sigma_{r}} \left(\frac{\rho'(r/\e)}{r\e}\sum_{j=1}^{n-d} \langle D^{2}u_{j}(0)z,z\rangle\nu_{j} + C_{\rho,\xi} H(0)\,\xi(r/\e)\right)\, d\cH^{d-1}(z)\, dr\\ 
&= \int_{0}^{\e} \left(\frac{\rho'(r/\e)}{r\e}\sum_{j=1}^{n-d}\int_{\Sigma_{r}}  \langle D^{2}u_{j}(0)z,z\rangle\, d\cH^{d-1}(z)\, \nu_{j} + C_{\rho,\xi} H(0)\,\xi(r/\e) d\omega_{d}r^{d-1}\right)\, dr\,.
\end{align*}
Now, owing to the symmetry of $\Sigma_{r}$, we can assume up to a rotation that the canonical basis of $T_{0}M \simeq \R^{d}$ coincides with the spectral basis for $D^{2}u_{j}(0)$, so that $\langle D^{2}u_{j}(0)z,z\rangle = \sum_{h=1}^{d} \lambda^{j}_{h}z_{h}^{2}$, where $\lambda^{j}_{h}$ denotes the $h$-th eigenvalue of $D^{2}u_{j}(0)$. We thus find
\begin{align}
A' &= \int_{0}^{\e} \left(\frac{\rho'(r/\e)}{r\e}\sum_{j=1}^{n-d}\sum_{h=1}^{d}\lambda^{j}_{h}\int_{\Sigma_{r}}  z_{h}^{2}\, d\cH^{d-1}(z)\, \nu_{j} + C_{\rho,\xi} H(0)\,\xi(r/\e) d\omega_{d}r^{d-1}\right)\, dr \nonumber \\
&= \int_{0}^{\e} \left(\frac{\rho'(r/\e)}{r\e}\sum_{j=1}^{n-d}\sum_{h=1}^{d}\lambda^{j}_{h}\omega_{d}r^{d+1} \nu_{j} + C_{\rho,\xi} H(0)\,\xi(r/\e) d\omega_{d}r^{d-1}\right)\, dr \nonumber \\ 
&= \int_{0}^{\e} \left(\frac{\rho'(r/\e)}{r\e}\sum_{j=1}^{n-d}\Delta u_{j}(0)\omega_{d}r^{d+1}\, \nu_{j} + C_{\rho,\xi} H(0)\,\xi(r/\e) d\omega_{d}r^{d-1}\right)\, dr \nonumber \\ 
&= \omega_{d} H(0)\, \int_{0}^{\e} \left(\frac{r\rho'(r/\e)}{\e} + d\, C_{\rho,\xi}\,\xi(r/\e) \right) r^{d-1}\, dr \nonumber \\ 
&= \omega_{d}\e^{d} H(0)\, \int_{0}^{1} \left(s\rho'(s) + d\, C_{\rho,\xi}\,\xi(s) \right) s^{d-1}\, ds\,. \label{eq_constantTerm}
\end{align}
We can now observe the following two facts about \eqref{eq_constantTerm}. First, $A' = 0$ for any pair of kernels $\rho,\xi$ (this can be seen through an integration by parts coupled with the definition of the constant $C_{\rho,\xi}$). Second, if we require the additional nullity of the integrand, we come to the differential relation
\begin{equation}\label{NKP0}
s\rho'(s) = -d C_{\rho,\xi} \xi(s)\,.
\end{equation}
At this point we must check whether \eqref{NKP0} is compatible with the definition of $C_{\rho,\xi}$. To this aim we only have to show that if $(\rho,\xi)$ satisfy \eqref{NKP0}, then $\frac{\int_{0}^{1}\rho(s)\, s^{d-1}\, ds}{\int_{0}^{1}\xi(s)\, s^{d-1}\, ds} = C_{\rho,\xi}$. Indeed by integrating by parts, by using the properties of the kernels, and owing to \eqref{NKP0}, we obtain
\begin{align*}
\frac{\int_{0}^{1}\rho(s)\, s^{d-1}\, ds}{\int_{0}^{1}\xi(s)\, s^{d-1}\, ds} 
=\frac{-\int_{0}^{1}\rho'(s)\, s^{d}\, ds}{d\int_{0}^{1}\xi(s)\, s^{d-1}\, ds}
=\frac{dC_{\rho,\xi}\int_{0}^{1}\xi(s)\, s^{d-1}\, ds}{d\int_{0}^{1}\xi(s)\, s^{d-1}\, ds} 
=C_{\rho,\xi}\,,
\end{align*}
as wanted. On the other hand, if we recall that the profiles $\rho$ and $\xi$ must also satisfy
\[
\int_{0}^{1} \rho(s) s^{n-1}\, ds = \int_{0}^{1} \xi(s) s^{n-1}\, ds = 1\,,
\]
we obtain the extra condition that, together with \eqref{NKP0}, uniquely determines the value of $C_{\rho,\xi}$. Indeed, assuming \eqref{NKP0} and integrating by parts we find
\[
d C_{\rho,\xi} = d C_{\rho,\xi}\int_{0}^{1}\xi(s) s^{n-1}\, ds  = -\int_{0}^{1}\rho'(s) s^{n}\, ds = n\int_{0}^{1}\rho(s) s^{n-1}\, ds = n\,, 
\]
whence $C_{\rho,\xi} = \frac nd$ and thus \eqref{NKP0} is equivalent to \eqref{NKP}.

\section{Discrete approximations of a varifold}
\label{section:DAV}

In this section, we prove that the family of discrete volumetric varifolds and the family of point cloud varifolds approximate well the space of rectifiable varifolds in the sense of weak--$\ast$ convergence, or $\Delta^{1,1}$ metric. Moreover, we give a way of quantifying this approximation in terms of the mesh size and the mean oscillation of tangent planes. We start with a technical lemma involving a general $d$--varifold.

\begin{lemma}\label{lemma:approx-local-estimate}
Let $\Omega \subset \R^n$ be an open set and $V$ be a $d$--varifold in $\Omega$. Let $(\cK_i)_{i\in \N}$ be a sequence of meshes of $\Omega$, and set
\[
\delta_i = \sup_{K \in \cK_i} \xdiam (K)\quad \forall\,i\in \N\: .
\]
Then, there exists a sequence of discrete (point cloud or volumetric) varifolds $(V_i)_i$ such that for any open set $U \subset \Omega$,
\begin{equation} \label{eq_approx_vol_pt_varifolds} 
\Delta^{1,1}_U (V,V_i) 
\leq \delta_i \V ( U^{\delta_i} ) + \sum_{K \in \cK_i} \min_{P \in \G} \int_{(U^{\delta_i} \cap K) \times \G} \| P - S \| \, dV (x,S) \: .
\end{equation}
\end{lemma}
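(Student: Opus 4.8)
The plan is to take $V_i$ to be the canonical discrete varifold attached to the mesh $\cK_i$. Concretely, for each cell $K\in\cK_i$ I set $m_K=\V(K)$ (which is finite since $K$ is bounded and $\V$ is a Radon measure) and I pick $P_K\in\argmin_{P\in\G}\int_{K\times\G}\|P-S\|\,dV(x,S)$; this minimum is attained because $\G$ is compact and $P\mapsto\int_{K\times\G}\|P-S\|\,dV$ is continuous, the integrand being bounded and $V(K\times\G)=\V(K)<+\infty$. For the point cloud construction I also choose an arbitrary $x_K\in K$ and put $V_i=\sum_{K\in\cK_i}m_K\,\delta_{x_K}\otimes\delta_{P_K}$; the volumetric construction $V_i=\sum_{K\in\cK_i}\frac{m_K}{|K|}\cL^n_{|K}\otimes\delta_{P_K}$ is handled in exactly the same way (assuming, as usual, $|K|>0$), replacing below the evaluation at $x_K$ by the average over $K$. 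Since every bounded region meets only finitely many cells, each of finite mass, $V_i$ is a $d$--varifold.

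To bound $\Delta^{1,1}_U(V,V_i)$ I fix $\phi\in\xLip_1(\Omega\times\G)$ with $\|\phi\|_\infty\le1$ and $\supp\phi\subset U\times\G$. Using $m_K=\V(K)=\int_{K\times\G}dV$ I would write
\begin{equation*}
\int\phi\,dV-\int\phi\,dV_i=\sum_{K\in\cK_i}\int_{K\times\G}\big(\phi(x,S)-\phi(x_K,P_K)\big)\,dV(x,S)\,,
\end{equation*}
with $\phi(x_K,P_K)$ replaced by $\fint_K\phi(y,P_K)\,d\cL^n(y)$ in the volumetric case. The decisive observation is that $\phi$ vanishes outside $U\times\G$, so only cells $K$ with $K\cap U\neq\emptyset$ contribute; for each such cell $\diam K\le\delta_i$ forces $K\subset U^{\delta_i}$, hence $U^{\delta_i}\cap K=K$.

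Then, for $(x,S)\in K\times\G$ with $K$ a contributing cell, the $1$--Lipschitz property of $\phi$ together with $|x-x_K|\le\diam K\le\delta_i$ gives $|\phi(x,S)-\phi(x_K,P_K)|\le\delta_i+\|S-P_K\|$ (and likewise, via $|x-y|\le\delta_i$, in the volumetric case). Summing over the contributing cells,
\begin{equation*}
\left|\int\phi\,dV-\int\phi\,dV_i\right|\le\delta_i\sum_{K\cap U\neq\emptyset}\V(K)+\sum_{K\cap U\neq\emptyset}\int_{K\times\G}\|S-P_K\|\,dV(x,S)\,.
\end{equation*}
The first sum is $\delta_i\,\V\big(\bigcup_{K\cap U\neq\emptyset}K\big)\le\delta_i\,\V(U^{\delta_i})$, the cells being pairwise disjoint. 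In the second sum, the optimality of $P_K$ and the identity $U^{\delta_i}\cap K=K$ (valid on contributing cells) give $\int_{K\times\G}\|S-P_K\|\,dV=\min_{P\in\G}\int_{(U^{\delta_i}\cap K)\times\G}\|P-S\|\,dV$, and adjoining the remaining cells of $\cK_i$ only adds nonnegative terms; hence the second sum is $\le\sum_{K\in\cK_i}\min_{P\in\G}\int_{(U^{\delta_i}\cap K)\times\G}\|P-S\|\,dV$. Taking the supremum over $\phi$ yields \eqref{eq_approx_vol_pt_varifolds}.

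There is no real obstacle here: the proof is bookkeeping built on the Lipschitz estimate. The points that need attention — and that explain the precise form of the right-hand side of \eqref{eq_approx_vol_pt_varifolds} — are (i) that a cell meeting $U$ lies entirely in $U^{\delta_i}$, so the per-cell integrals may be written over $U^{\delta_i}\cap K$; (ii) that one may freely extend the cell sum from the contributing cells to all of $\cK_i$, since the extra terms are nonnegative; and (iii) that the point cloud and volumetric cases run in parallel, the only change being $|x-x_K|\le\delta_i$ versus $|x-y|\le\delta_i$ inside the average.
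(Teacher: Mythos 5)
Your proposal is correct and follows essentially the same route as the paper: the same choice $m_K=\V(K)$, $x_K\in K$, $P_K\in\argmin_P\int_{K\times\G}\|P-S\|\,dV$, followed by the same per-cell Lipschitz estimate $|\phi(x,S)-\phi(x_K,P_K)|\le\delta_i+\|S-P_K\|$ and the same bookkeeping with cells meeting $U$ being contained in $U^{\delta_i}$. The only differences are cosmetic (you detail the attainment of the argmin and present the point-cloud case first, the paper presents the volumetric case), so there is nothing to add.
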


\begin{proof}
We define $V_{i}$ as either the volumetric varifold
\[
V_i = \sum_{K \in \cK_i} \frac{m_K^i}{|K|} \cL^n \otimes \delta_{P_K^i}\, ,
\]
or the point cloud varifold 
\[
V_i = \sum_{K \in \cK_i} m_K^i \delta_{x_K^i} \otimes \delta_{P_K^i} \, ,
\]
with
\[
m_K^i = \V(K), \quad x_K^i \in K \quad \text{and} \quad P_K^i \in \argmin_{P \in \G} \int_{K \times \G} \left\| P - S \right\| \, dV(x,S)\, .
\]
Let us now explain the proof for the case of volumetric varifolds, as it is completely analogous in the case of point cloud varifolds. 
For any open set $U \subset \Omega$ and $\phi \in \xLip_1 (\R^n \times \G)$ with $\supp \phi \subset U \times \G$, we set
\[
\Delta_{i}(\phi) = \int_{\Omega \times \G} \phi \, d V_i - \int_{\Omega \times \G} \phi \, dV
\]
and obtain
\begin{align*}
\left| \Delta_{i}(\phi) \right| & = \left| \sum_{K \in \cK_i} \int_{K \cap U}  \phi(x,P_K^i) \frac{\V(K)}{|K|} \, d \cL^n(x) - \sum_{K \in \cK_i} \int_{(K \cap U) \times \G} \hspace*{-1cm} \phi(y,T) \, dV(y,T) \right| \nonumber \\
%& = \left| \sum_{\substack{K \in \cK_i \\ K \cap B \neq \emptyset}} \fint_{x \in K} \int_{(y,T) \in K \times \G} \phi(x,P_K^i) \, dV(y,T) \, d \cL^n(x) \right. \\
%& \qquad \left.  - \sum_{\substack{K \in \cK_i \\ K \cap B \neq \emptyset}} \fint_{x \in K} \int_{(y,T) \in K \times \G} \phi(y,T) \, dV(y,T) \, d \cL^n(x) \right| \nonumber \\
& \leq \sum_{\substack{K \in \cK_i \\ K \cap U \neq \emptyset}} \fint_{x \in K} \int_{(y,T) \in K \times \G} \underbrace{\left| \phi(x,P_K^i) - \phi(y,T) \right| }_{\leq \left( |x-y| + \left\| P_K^i - T \right\| \right) } \, dV(y,T) \, d \cL^n(x) \nonumber \\
%& \leq  \xlip(\phi) \sum_{K \in \cK_i}  \frac{| \Pi (\supp \phi) \cap K|}{|K|} \left( \delta_i \V (\Pi (\supp \phi) \cap K) + \displaystyle \int_{(K \cap \Pi(\supp(\phi)) ) \times \G} \hspace{-1.5cm} \left\| P^i (y) - T \right\| \, dV(y,T) \right) \nonumber \\
%& \leq \delta_i \xlip(\phi) \V(\Omega \cap \Pi(\supp \phi) ) + \xlip(\phi) \int_{(\Omega \cap \Pi(\supp(\phi)) ) \times \G} \left\| P^i (y) - T \right\| \, dV(y,T) \nonumber \\
& \leq \delta_i \sum_{\substack{K \in \cK_i \\ K \cap U \neq \emptyset}} \V(K) + \, \sum_{\substack{K \in \cK_i \\ K \cap U \neq \emptyset}} \int_{ K \times \G} \left\| P_K^i - T \right\| \, dV(y,T) \\
& \leq \delta_i\, \V(U^{\delta_i}) + \, \sum_{K \in \cK_i} \min_{P \in \G} \int_{(U^{\delta_i} \cap K) \times \G} \left\| P - T \right\| \, dV(y,T) \: , \nonumber
\end{align*}
which concludes the proof up to taking the supremum of $\Delta_{i}(\phi)$ over $\phi$.
\end{proof}

In Theorem \ref{diffuse_discrete_varifolds_theorem} below we show that rectifiable varifolds can be approximated by discrete varifolds. Moreover we get explicit convergence rates under the following regularity assumption. 
\begin{dfn}\label{def:piecewiseC1beta}
Let $S$ be a $d$-rectifiable set, $\theta$ be a positive Borel function on $S$, and $\beta\in (0,1]$. We say that the rectifiable $d$--varifold $V = v(S,\theta)$ is \emph{piecewise $C^{1,\beta}$} if there exist $R>0,\ C\ge 1$ and a closed set $\Sigma\subset S$ such that the following properties hold:
\begin{itemize}
\item (Ahlfors-regularity of $S$) for all $x\in S$ and $0<r<R$ 
\begin{equation}\label{ahlfors-d}
C^{-1}r^{d} \le \cH^{d}(S\cap B(x,r))\le Cr^{d}\, ;
\end{equation}

\item (Ahlfors-regularity of $\Sigma$) for all $z\in \Sigma$ and $0<r<R$ \begin{equation}\label{ahlfors-d-1}
C^{-1}r^{d-1} \le \cH^{d-1}(\Sigma\cap B(z,r))\le Cr^{d-1}\,;
\end{equation}

\item ($C^{1,\beta}$ regularity of $S\setminus \Sigma$) the function 
\[
\tau(r) = \sup\{\|T_{y}S - T_{z}S\|:\ y,z\in S\cap B(x,r),\ x\in S \text{ with }\dist(x,\Sigma)>Cr\}
\]
satisfies
\begin{equation}\label{eq:C1beta}
\tau(r) \le C\, r^{\beta}\qquad \forall\, 0<r<R\,;
\end{equation}

\item for all $0<r<\e<R$ and all $z\in \Sigma$  
\begin{equation}\label{eq:d-1versusd}
C^{-1}r\,\cH^{d-1}(\Sigma\cap B(z,\e))\le \cH^{d}(S\cap [\Sigma]_{r}\cap B(z,\e)) \le C\, r\,\cH^{d-1}(\Sigma\cap B(z,\e))\,.
\end{equation}

\item for $\cH^{d}$-almost all $x\in S$ we have 
\begin{equation}\label{eq:thetauniform}
C^{-1}\le \theta(x)\le C\,.
\end{equation}
\end{itemize}
\end{dfn}

\begin{remk}\label{remk:Almgren}\rm 
We note that varifolds of class piecewise $C^{1,\beta}$ form a natural collection of varifolds, which for instance the so-called $(\mathbf{M},\e,\delta)$-minimal sets of dimension $1$ and $2$ in $\R^{3}$ in the sense of Almgren belong to. In other words the rectifiable varifold $V=v(S,1)$ is of class piecewise $C^{1,\beta}$ as a consequence of Taylor's regularity theory \cite{Taylor76} (see also \cite{David2} for an extension of Taylor's results in higher dimensions). Of course, the family of rectifiable varifolds in $\R^{3}$ that are piecewise $C^{1,\beta}$ is much larger than $(\mathbf{M},\e,\delta)$-minimal sets.
\end{remk}

In the following theorem we prove an approximation result for rectifiable $d$--varifolds, that becomes quantitative as soon as the varifolds are assumed to be piecewise $C^{1,\beta}$ in the sense of Definition \ref{def:piecewiseC1beta}. In order to avoid a heavier, localized form of Definition \ref{def:piecewiseC1beta} we set $\Omega = \R^{n}$.
\begin{theo} \label{diffuse_discrete_varifolds_theorem}
Let $(\cK_i)_{i\in \N}$ be a sequence of meshes of $\R^{n}$ and set $\delta_i = \sup_{K \in \cK_i} \xdiam (K)$ for all $i\in \N$. Let $V = v(M,\theta)$ be a rectifiable $d$--varifold in $\R^{n}$ with $\|V\|(\R^{n})<+\infty$. Then there exists a sequence of discrete (volumetric or point cloud) varifolds ${(V_i)}_i$ with the following properties:
\begin{itemize}
\item[(i)] $\Delta^{1,1} (V_i,V) \to 0$ as $i\to\infty$;

\item[(ii)] If $V$ is piecewise $C^{1,\beta}$ in the sense of Definition \ref{def:piecewiseC1beta} then there exist constants $C,R > 0$ such that for all balls $B$ with radius $r_{B}\in (0,R)$ centered on the support of $\V$ one has
\begin{equation} \label{eq_localized_flat_convergence}
\Delta^{1,1}_B (V_i,V) \leq C  \left(\delta_i^{\beta} + \frac{\delta_{i}}{r_{B}+\delta_{i}}\right)\, \V(B^{C\delta_i})
\end{equation}
and  
\begin{equation}\label{eq:globalflatconv}
\Delta^{1,1}(V_i,V) \leq C  \left(\delta_i^{\beta} + \frac{\delta_{i}}{R}\right)\, \V(\R^{n})
\end{equation}
\end{itemize}
\end{theo}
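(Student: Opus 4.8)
The plan is to take $(V_i)_i$ to be exactly the sequence of discrete (volumetric or point cloud) varifolds produced by Lemma~\ref{lemma:approx-local-estimate} applied to the meshes $(\cK_i)_i$, so that for every open $U\subset\R^n$ and every $i$,
\[
\Delta^{1,1}_U(V,V_i)\ \le\ \delta_i\,\|V\|(U^{\delta_i})\ +\ O_i(U),\qquad O_i(U):=\sum_{K\in\cK_i}\ \min_{P\in\G}\int_{M\cap U^{\delta_i}\cap K}\|P-\Pi_{T_xM}\|\,\theta(x)\,d\cH^d(x),
\]
where the last form of the oscillation term is legitimate since $V=v(M,\theta)$ is rectifiable, $\V=\theta\,\cH^d_{|M}$. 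Both \textbf{(i)} and \textbf{(ii)} reduce to estimating $O_i(U)$ for $U=\R^n$, resp.\ $U=B$, because the first term is harmless: $\delta_i\le\delta_i^\beta$ once $\delta_i\le1$ (as $\beta\le1$), and $\|V\|(U^{\delta_i})\le\|V\|(U^{C\delta_i})$.

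For \textbf{(i)} I would fix $\eta>0$ and, by Lusin's theorem, choose a compact $M_\eta\subset M$ with $\|V\|(M\setminus M_\eta)<\eta$ on which $x\mapsto\Pi_{T_xM}$ is uniformly continuous with modulus $\omega$. For each cell $K$ meeting $M_\eta$, picking $x_K\in M_\eta\cap K$ and testing the minimum with $P=\Pi_{T_{x_K}M}$, and using $\diam K\le\delta_i$ together with $\diam\G\le2$, gives
\[
\min_{P}\int_{M\cap K}\|P-\Pi_{T_xM}\|\,\theta\,d\cH^d\ \le\ \omega(\delta_i)\,\|V\|(M_\eta\cap K)+2\,\|V\|\big((M\setminus M_\eta)\cap K\big);
\]
for cells not meeting $M_\eta$ the bound $2\,\|V\|\big((M\setminus M_\eta)\cap K\big)$ already holds. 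Summing over the pairwise disjoint cells yields $O_i(\R^n)\le\omega(\delta_i)\,\|V\|(\R^n)+2\eta$, hence $\limsup_i O_i(\R^n)\le2\eta$; letting $\eta\to0$ gives $O_i(\R^n)\to0$ and thus $\Delta^{1,1}(V,V_i)\to0$.

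For \textbf{(ii)} I assume $V$ is piecewise $C^{1,\beta}$, with constants $C,R$ and singular set $\Sigma\subset M$ as in Definition~\ref{def:piecewiseC1beta}, fix a ball $B=B(x_B,r_B)$ with $x_B\in\supp\|V\|$ and $r_B<R$, and take $i$ large. I split the cells meeting $B^{\delta_i}$ into \emph{good} ones ($\dist(K,\Sigma)>C\delta_i$) and \emph{bad} ones. For a good cell, every $x_K\in M\cap K$ has $\dist(x_K,\Sigma)>C\delta_i$ and $M\cap K\subset B(x_K,\delta_i)$, so by the very definition of $\tau$ in \eqref{eq:C1beta} one gets $\|\Pi_{T_{x_K}M}-\Pi_{T_xM}\|\le\tau(\delta_i)\le C\delta_i^\beta$ for all $x\in M\cap K$; testing the minimum with $P=\Pi_{T_{x_K}M}$ and summing, the good cells contribute at most $C\delta_i^\beta\,\|V\|(B^{\delta_i})$. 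A bad cell $K$ satisfies $K\subset[\Sigma]_{(C+1)\delta_i}\cap B^{2\delta_i}$, so by $\diam\G\le2$ the bad cells contribute at most $2\,\|V\|\big([\Sigma]_{(C+1)\delta_i}\cap B^{2\delta_i}\big)$. The heart of the matter is then the density estimate
\[
\|V\|\big([\Sigma]_{c\delta_i}\cap B^{2\delta_i}\big)\ \le\ C'\,\frac{\delta_i}{r_B+\delta_i}\,\|V\|(B^{C'\delta_i}).
\]
If $r_B\le\delta_i$ this is trivial, the left side being $\le\|V\|(B^{C'\delta_i})$ and $\tfrac{\delta_i}{r_B+\delta_i}\ge\tfrac12$. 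If $r_B>\delta_i$: either $\dist(x_B,\Sigma)\ge r_B+C'\delta_i$, so there are no bad cells and the term vanishes; or there is $z_*\in\Sigma$ with $|z_*-x_B|\le r_B+C'\delta_i$, and then $[\Sigma]_{c\delta_i}\cap B^{2\delta_i}\subset B(z_*,\e)$ with $\e\simeq r_B<R$ (after shrinking $R$), so \eqref{eq:thetauniform}, \eqref{eq:d-1versusd} (giving $\cH^d(M\cap[\Sigma]_{c\delta_i}\cap B(z_*,\e))\le C\,c\delta_i\,\cH^{d-1}(\Sigma\cap B(z_*,\e))$), and \eqref{ahlfors-d-1} (giving $\cH^{d-1}(\Sigma\cap B(z_*,\e))\le C\e^{d-1}$) yield $\|V\|([\Sigma]_{c\delta_i}\cap B^{2\delta_i})\le C\delta_i\,r_B^{d-1}$; on the other hand $\|V\|(B^{C'\delta_i})\ge C^{-1}\cH^d(M\cap B(x_B,r_B))\ge C^{-1}r_B^d$ by \eqref{eq:thetauniform} and \eqref{ahlfors-d}. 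Taking the ratio gives the estimate, and combining the good- and bad-cell bounds (relabelling $C,R$) gives \eqref{eq_localized_flat_convergence}. The global bound \eqref{eq:globalflatconv} is obtained the same way with $U=\R^n$: the good cells contribute $\le C\delta_i^\beta\|V\|(\R^n)$, the bad ones lie in $[\Sigma]_{c\delta_i}$, and since $\|V\|(\R^n)<+\infty$ forces (via \eqref{ahlfors-d}) $S=M$ and hence $\Sigma$ to be bounded, a finite covering of $\Sigma$ by balls of radius $<R$ centred on $\Sigma$ together with \eqref{eq:d-1versusd} and \eqref{ahlfors-d-1} gives $\|V\|([\Sigma]_{c\delta_i})\le C\delta_i\,\cH^{d-1}(\Sigma)$, which is $\le C''\tfrac{\delta_i}{R}\|V\|(\R^n)$ because $\|V\|(\R^n)$ is a fixed positive constant.

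\textbf{Main obstacle.} The routine parts are the rewriting via Lemma~\ref{lemma:approx-local-estimate} and the Lusin argument in (i). The delicate point is the bad-cell density estimate in (ii): one must recognize that the mass in the $O(\delta_i)$-tube around the singular set is of order $\delta_i$ times a $(d-1)$-dimensional quantity (this is exactly what \eqref{eq:d-1versusd} is for), compare it against the full $d$-dimensional mass of $B^{C\delta_i}$ through the two-sided Ahlfors bounds and the bounds on $\theta$, and separately dispose of the degenerate regimes $r_B\lesssim\delta_i$ and "$B$ far from $\Sigma$"; the bookkeeping of the structural constants — in particular the constant $C$ that appears simultaneously in the definition of $\tau$ and in the separation threshold defining good cells, and the need to shrink $R$ so that all enlarged balls stay admissible — is where the argument requires care.
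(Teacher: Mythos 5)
Your proposal is correct, and its core coincides with the paper's proof: you take the $V_i$ produced by Lemma~\ref{lemma:approx-local-estimate}, reduce both claims to estimating the tangent-oscillation sum, and for (ii) you use exactly the paper's decomposition into cells far from / close to $\Sigma$ at scale $C\delta_i$, with the H\"older bound \eqref{eq:C1beta} on the regular cells and the tube estimate \eqref{eq:d-1versusd} combined with \eqref{ahlfors-d}, \eqref{ahlfors-d-1}, \eqref{eq:thetauniform} on the singular ones; the paper's Steps 4--5 run the same comparison, including the auxiliary point $z\in\Sigma$ with $|z-x_B|\le r_B+O(\delta_i)$ and the lower bound $\V(B^{\eta_i})\ge C^{-2}(r_B+\eta_i)^d$. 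You deviate in two sub-arguments, both legitimate and somewhat leaner. For (i) you invoke Lusin's theorem and uniform continuity of $x\mapsto T_xM$ on a compact set of almost full $\V$-measure, whereas the paper first approximates $T_\cdot M$ in $\xL^1(\V)$ by a Lipschitz matrix-valued map, averages it on cells, and then projects back to $\G$ cell by cell (Steps 1--2); both arguments need the hypothesis $\delta_i\to 0$, which is implicit in the statement and used by the paper as well. For the global bound \eqref{eq:globalflatconv} the paper localizes \eqref{eq_localized_flat_convergence} via a Besicovitch covering of $M$ at scale $R/2$ and a partition of unity (Step 6), while you estimate the global oscillation term directly: finite mass plus the Ahlfors lower bound \eqref{ahlfors-d} force $M$, hence the closed set $\Sigma\subset M$, to be bounded, so a finite covering of $\Sigma$ by balls of radius less than $R$ together with \eqref{eq:d-1versusd}, \eqref{ahlfors-d-1} gives $\V([\Sigma]_{c\delta_i})\le C\,\delta_i$, which you then absorb into $C\,\frac{\delta_i}{R}\,\V(\R^n)$; this avoids the covering/partition machinery at the price of a constant depending also on $\cH^{d-1}(\Sigma)$, $R$ and $\V(\R^n)$, which is allowed since the theorem only asserts the existence of constants depending on $V$. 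Finally, as in the paper (whose Step 4 assumes $\delta_i\le (R-2r_B)/(C+1)$), your estimate in (ii) is established for $i$ large and after shrinking $R$; this caveat is already present in the paper's own proof and announced in its introduction, so it is not a gap specific to your argument.
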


\begin{proof} The proof is split into some steps. 

\noindent
\textit{Step $1$.} We show that for all $i$ there exists $A^i : \R^{n} \rightarrow \cM_n(\R)$ constant in each cell $K \in \cK_i$, such that
\begin{equation}\label{eq:goalstep1}
\int_{\R^{n} \times \G} \left\| A^i (y) - T \right\| \, dV(y,T) = \int_{y \in \R^{n}}  \left\| A^i (y) - T_y M \right\| \, d \V(y) \xrightarrow[i \to + \infty]{} 0  \: .
\end{equation}

\noindent Indeed, let us fix $\epsilon >0$. Since $x \mapsto T_x M \in \xL^1 (\R^{n} , \cM^n (\R) , \V)$, there exists $A : \R^{n} \rightarrow \cM^n (\R) \in \xLip (\R^{n})$ such that
\[
\int_{y \in \R^{n}}  \left\| A (y) - T_y M \right\| \, d \V(y) < \epsilon \: .
\]
For all $i$ and $K \in \cK_i$, define for $x \in K$,
\[
A^i (x) = A_K^i = \frac{1}{\V(K)} \int_K A(y) \, d \V(y) \: .
\]
Then 
\begin{align*}
\int_{y \in \R^{n}}  \left\| A^i (y) - T_y M \right\| \, d \V(y) & \leq \int_{y \in \R^{n}}  \left\| A^i (y) - A(y) \right\| \, d \V(y) + \int_{y \in \R^{n}}  \left\| A (y) - T_y M \right\| \, d \V(y) \\
& \leq \epsilon + \sum_{K \in \cK_i} \int_{y \in K}  \left\|  \frac{1}{\V(K)} \int_K A(u) \, d \V(u) - A(y) \right\| \, d \V(y) \\
& \leq \epsilon + \sum_{K \in \cK_i}  \frac{1}{\V(K)} \int_{y \in K}    \int_{u \in K} \left\| A(u) - A(y) \right\| \, d \V(u)  \, d \V(y)  \\
& \leq \epsilon + \delta_i \xlip (A) \V(\R^{n}) \leq 2 \epsilon \text{ for } i \text{ large enough,}
\end{align*}
which proves \eqref{eq:goalstep1}.

\noindent
\textit{Step $2$.} Here we make the result of Step $1$ more precise, i.e., for all $i$, we prove that there exists $T^i : \R^{n} \rightarrow \G$ constant in each cell $K \in \cK_i$ such that
\begin{equation}\label{eq:goalstep2}
\int_{\R^{n} \times \G} \left\| T^i (y) - T \right\| \, dV(y,T) = \int_{y \in \R^{n}}  \left\| T^i (y) - T_y M \right\| \, d \V(y) \xrightarrow[i \to + \infty]{} 0  \: .
\end{equation}

\noindent Indeed, let $\epsilon >0$ and, thanks to Step $1$, take $i$ large enough and $A^i : \R^{n} \rightarrow \cM_n(\R)$ as in \eqref{eq:goalstep1}, such that
\[
\sum_{K \in \cK_i} \int_K  \left\| A^i (y) - T_y M \right\| \, d \V(y) < \epsilon \,.
\]
As a consequence we find
\[
\int_K  \left\| A^i (y) - T_y M \right\| \, d \V(y) = \epsilon_K^i\quad \text{with}\quad \sum_{K \in \cK_i} \epsilon_K^i < \epsilon\,.
\] 
In particular, for all $K \in \cK_i$, there exists $y_K \in K$ such that
\[
\left\| A^i (y_K) - T_{y_K} M \right\| \leq \frac{\epsilon_K^i}{\V(K)} \: .
\]
Define $T^i : \R^{n} \rightarrow \G$ by $T^i (y) = T_{y_K} M$ for $K \in \cK_i$ and $y \in K$, hence $T^{i}$ is constant in each cell $K$ and 
\begin{align}
\int_{\R^{n} \times \G} \left\| T^i (y) - T \right\| & \, dV(y,T)  = \sum_{K \in \cK_i} \int_K  \left\| T_{y_K} M  - T_y M \right\| \, d \V(y) \label{eq_approx_control_tangent_plane} \\
& \leq \sum_{K \in \cK_i} \int_K  \| T_{y_K} M  - \underbrace{A^i(y)}_{= A^i(y_K)} \| \, d \V(y) + \int_{\R^{n} \times \G} \left\| A^i (y) - T \right\| \, dV(y,T) \nonumber \\
& \leq \sum_{K \in \cK_i} \int_K \frac{\epsilon_K^i}{\V(K)} d \V(y) + \epsilon \nonumber \leq 2 \epsilon \: ,
\end{align}
which implies \eqref{eq:goalstep2}.

\noindent
\textit{Step $3$: proof of (i).} We preliminarily show that 
\begin{equation}\label{eq:osctozero}
\sum_{K \in \cK_i} \min_{P \in \G} \int_{K \times \G} \left\| P - T \right\| \, dV(y,T) \xrightarrow[i \to \infty]{} 0.
\end{equation}
Indeed, thanks to Step $2$, let $T^i : \R^{n} \rightarrow \G$ be such that \eqref{eq:goalstep2} holds. We have
\begin{align*}
\sum_{K \in \cK_i} \min_{P \in \G} \int_{K \times \G} \left\| P - T \right\| \, dV(y,T) & \leq \sum_{K \in \cK_i} \int_{K \times \G}  \left\| T^i_K  - T \right\| \, dV(y,T) \\
& = \int_{\R^{n} \times \G} \left\| T^i (y) - T \right\| \, dV(y,T)\\
& \xrightarrow[i \to +\infty]{} 0 \: ,
\end{align*}
which proves \eqref{eq:osctozero}. Then (i) follows by combining \eqref{eq:osctozero} with Lemma \ref{lemma:approx-local-estimate}.

\noindent
\textit{Step $4$.} Assume that $V$ is piecewise $C^{1,\beta}$ and let $R,C>0$ be as in Definition \ref{def:piecewiseC1beta}. We shall now prove that for any ball $B\subset \R^{n}$ centered on the support of $\V$ with radius $r_{B} < R/2$ and for any infinitesimal sequence $\eta_{i}\ge C\delta_{i}$, assuming also $i$ large enough so that $\delta_{i}\le (R-2r_{B})/(C+1)$, there exists a decomposition $\cK_{i} = \cK_{i}^{reg}\sqcup \cK_{i}^{sing}$ such that 
\begin{equation}\label{holdercondition}
\| T_x S - T_y S \| \leq C |x-y|^\beta , \qquad \forall\, K\in \cK_i^{reg},\ \forall\, x, y \in K\cap S 
\end{equation}
and
\begin{equation}\label{smallsingularset}
\V \left( \bigcup\cK_i^{sing} \cap B \right) \leq C' \frac{\delta_i}{r_{B}+\eta_{i}}\V(B^{\eta_{i}})\: .
\end{equation}
Define $\cK_{i}^{sing}$ as the set of $K\in \cK_{i}$ for which $\Sigma^{(C\delta_{i})}\cap K$ is non-empty, and set $\cK_{i}^{reg} = \cK_{i}\setminus \cK_{i}^{sing}$. It is immediate to check that \eqref{holdercondition} holds, thanks to \eqref{eq:C1beta}. Let now $B$ be a fixed ball of radius $0<r_{B}<R/2$ centered at some point $x\in S$. Take $K\in \cK_{i}^{sing}$ and assume without loss of generality that $K\cap B$ is not empty, hence there exists $p\in K\cap B$ and $z\in \Sigma$ such that $|p-z| < (C+1)\delta_{i}$. Consequently, $B\subset B(z,2r_{B}+(C+1)\delta_{i})$. Then $K\cap B\subset \Sigma^{(C+1)\delta_{i}}\cap B(z,2r_{B}+(C+1)\delta_{i})$ and thus, assuming in addition that $\eta_{i}<R-r_{B}$ for $i$ large enough, we obtain
\begin{align}\notag
\V\left(\bigcup_{K\in \cK_{i}^{sing}} K\cap B\right) &\le \V \Big(\Sigma^{(C+1)\delta_{i}}\cap B(z,2r_{B}+(C+1)\delta_{i})\Big) \\\notag
&\le C\cH^{d}\Big(S\cap \Sigma^{(C+1)\delta_{i}}\cap B(z,2r_{B}+(C+1)\delta_{i})\Big)\\\label{eq:passo1}
&\le C^{2}(C+1)\delta_{i}\, \cH^{d-1}\Big(\Sigma \cap B(z,2r_{B}+(C+1)\delta_{i})\Big)\,,
\end{align}
thanks to \eqref{eq:d-1versusd} and \eqref{eq:thetauniform}. On the other hand, since $C\delta_{i} \le \eta_{i} <R-r_{B}$ one has by \eqref{ahlfors-d}, \eqref{ahlfors-d-1} and \eqref{eq:thetauniform} that 
\begin{align*}\notag
\V(B^{\eta_{i}}) &\ge C^{-1}\cH^{d}(S\cap B^{\eta_{i}}) \ge C^{-2}(r_{B}+\eta_{i})^{d} \ge C^{-2}(r_{B}+\eta_{i})(r_{B}+C\delta_{i})^{d-1}\\\notag 
&\ge \frac{1}{2^{d-1}C^{2}(C+1)}\frac{(r_{B}+\eta_{i})}{\delta_{i}}\, (C+1)\delta_{i}\, (2r_{B}+(C+1)\delta_{i})^{d-1}\\
&\ge \frac{1}{2^{d-1}C^{3}(C+1)}\frac{(r_{B}+\eta_{i})}{\delta_{i}}\, C(C+1)\delta_{i} \cH^{d-1}(\Sigma\cap B(z,2r_{B}+(C+1)\delta_{i}))\\
&\ge \frac{1}{2^{d-1}C^{5}(C+1)}\frac{(r_{B}+\eta_{i})}{\delta_{i}}\, \V\left(\bigcup_{K\in \cK^{sing}} K\cap B\right)\,,
%\\\label{eq:passo2}\,,
\end{align*}
which by \eqref{eq:passo1} gives \eqref{smallsingularset} with $C'=2^{d-1}C^{5}(C+1)$.

\noindent
\textit{Step $5$.} Define $T_K^i = T_{y_K} M$ for each cell $K \in \cK_i$ and for some $y_K \in K$. Set 
\[
A_{i} = \sum_{K \in \cK_i} \min_{P \in \G} \int_{(B\cap K) \times \G} \left\| P - T \right\| \, dV(y,T)\,.
\]
Then for every ball $B$ of radius $r>0$, and choosing $\eta_{i} = C\delta_{i}$, we have
\begin{align}\notag 
A_{i} & =  \sum_{K \in \cK_i} \int_{K \cap B}  \left\| T_{y_K} M  - T_y M \right\| \, d \V(y) \\\notag
& = \sum_{K \in \cK_i^{reg}} \int_{K \cap B}  \left\| T_{y_K} M  - T_y M \right\| \, d \V(y) + \sum_{K \in \cK_i^{sing}} \int_{K \cap B}  \left\| T_{y_K} M  - T_y M \right\| \, d \V(y) \\\notag
& \leq \sum_{K \in \cK_i^{reg}} \int_{K \cap B} C |y_K -y|^\beta \, d \V(y) + 2 \V \left(\bigcup\cK_{i}^{sing} \cap B\right) \\\notag
& \leq C \delta_i^\beta \V (B) + 2 \V \left(\bigcup\cK_{i}^{sing} \cap B\right) \le C \left(\delta_{i}^{\beta} + \frac{\delta_{i}}{r_{B}+\eta_{i}}\right)\V(B^{C\delta_{i}})\\
\label{eq:estimate-sing-part}
&\le C \left(\delta_{i}^{\beta} + \frac{\delta_{i}}{r_{B}+\delta_{i}}\right)\V(B^{C\delta_{i}})
\end{align}
(the constant $C$ appearing in the various inequalities of \eqref{eq:estimate-sing-part} may change from line to line). Then, the local estimate \eqref{eq_localized_flat_convergence} is a consequence of Lemma \ref{lemma:approx-local-estimate} combined with \eqref{eq:estimate-sing-part}. 

\noindent
\textit{Step $6$.} For the proof of the global estimate \eqref{eq:globalflatconv} we set $r=R/2$ and apply Besicovitch Covering Theorem to the family of balls $\{B_{r}(x)\}_{x\in M}$, so that we globally obtain a subcovering $\{B_{\alpha}\}_{\alpha\in I}$ with overlapping bounded by a dimensional constant $\zeta_{n}$. We notice that $I$ is necessarily a finite set of indices, by the Ahlfors regularity of $M$. We now set $U = \R^{n}\setminus M$ and associate to the family $\{B_{\alpha}\}_{\alpha\in I} \cup \{U\}$ a partition of unity $\{\psi_{\alpha}\}_{\alpha\in I}\cup \{\psi_{U}\}$ of class $C^{\infty}$, so that by finiteness of $I$ there exists a constant $L\ge 1$ with the property that $\xlip(\psi_{U})\le L$ and $\xlip(\psi_{\alpha})\le L$ for all $\alpha\in I$. Moreover, the fact that the support of $\psi_{U}$ is disjoint from the closure of $M$ implies that there exists $i_{0}$ depending only on $M$, such that the support of $\|V_{i}\|$ is disjoint from that of $\psi_{U}$ for every $i\ge i_{0}$. Then we fix a generic test function $\phi \in C^{0}_{c}(\R^{n}\times \G)$ and define $\phi_{\alpha}(x,S) = \phi(x,S)\psi_{\alpha}(x)$ and $\phi_{U}(x,S) =  \phi(x,S)\psi_{U}(x)$, so that $\phi(x,S) = \phi_{U}(x,S) + \sum_{\alpha\in I}\phi_{\alpha}(x,S)$. By the fact that $\xlip(\phi_{\alpha}) \le \xlip(\phi) + \xlip(\psi_{\alpha})$ and $\xlip(\phi_{U}) \le \xlip(\phi) + \xlip(\psi_{U})$, by the Ahlfors regularity of $M$, by \eqref{eq_localized_flat_convergence}, and for $i\ge i_{0}$, we deduce that
\begin{align*}
|V_{i}(\phi) - V(\phi)| &\le \sum_{\alpha\in I}|V_{i}(\phi_{\alpha}) - V(\phi_{\alpha})| \le (1+L)\sum_{\alpha\in I}\Delta^{1,1}_{B_{\alpha}}(V_{i},V)\\
&\le C(1+L) \sum_{\alpha\in I} \left(\delta_i^{\beta} + \frac{\delta_{i}}{r}\right)\, \V(B_{\alpha}^{C\delta_i}) \le C(1+L) \left(\delta_i^{\beta} + \frac{\delta_{i}}{r}\right) \sum_{\alpha\in I} \V(B_{\alpha})\\
&\le C(1+L)\zeta_{n} \left(\delta_i^{\beta} + \frac{\delta_{i}}{r}\right)  \V(\R^{n})\le C\left(\delta_i^{\beta} + \frac{\delta_{i}}{R}\right)  \V(\R^{n})
\end{align*}
where, as before, the constant $C$ appearing in the above inequalities can change from one step to the other. This concludes the proof of \eqref{eq:globalflatconv} and thus of the theorem.

\end{proof}

\section{A varifold interpretation of the Cotangent Formula}
\label{section:cotangent}

One of the classical tools of discrete differential geometry is the so-called \textit{Cotangent Formula} \eqref{eq_functionalCotangentFormula} which provides a notion of mean curvature for a triangulated polyhedral surface. The Cotangent Formula has been introduced in \cite{pinkall1993} as the gradient of a discrete Dirichlet energy defined on triangulations. Loosely speaking it consists in the definition of a vector mean curvature functional $\widehat{H}$ by its action on \textit{nodal functions} (see \cite{Wardetzky2008}). In Proposition~\ref{prop_CotangentFormula} we show that the formula can be interpreted as the action of the first variation of the associated polyhedral varifold $V$ on any Lipschitz extension of a given nodal function $\phi$.

Let $\cT = (\cF,\cE,\cV)$ be a triangulation in $\R^3$, where $\cV \subset \R^3$ is the set of vertices, $\cE \subset \cV \times \cV$ is the set of edges and $\cF$ is the set of triangle faces (we refer to triangulations in the sense of polyhedral surfaces homeomorphic to a $2d$--manifold, as defined for instance in \cite{Wardetzky2008}). We denote by $M_\cT = \bigcup_{F \in \cF} F$ the triangulated surface. The nodal function $\phi_v$, $v \in \cV$, associated with $\cT$ is defined on $M_\cT$ by $\phi_v(v) =1$, $\phi_v(w) = 0$ for $w \in \cV$, $w \leq v$ and $\phi_v$ affine on each face $F \in \cF$.

\begin{figure}[!h]
\centering
\subfigure[]{\includegraphics[width=0.3\textwidth]{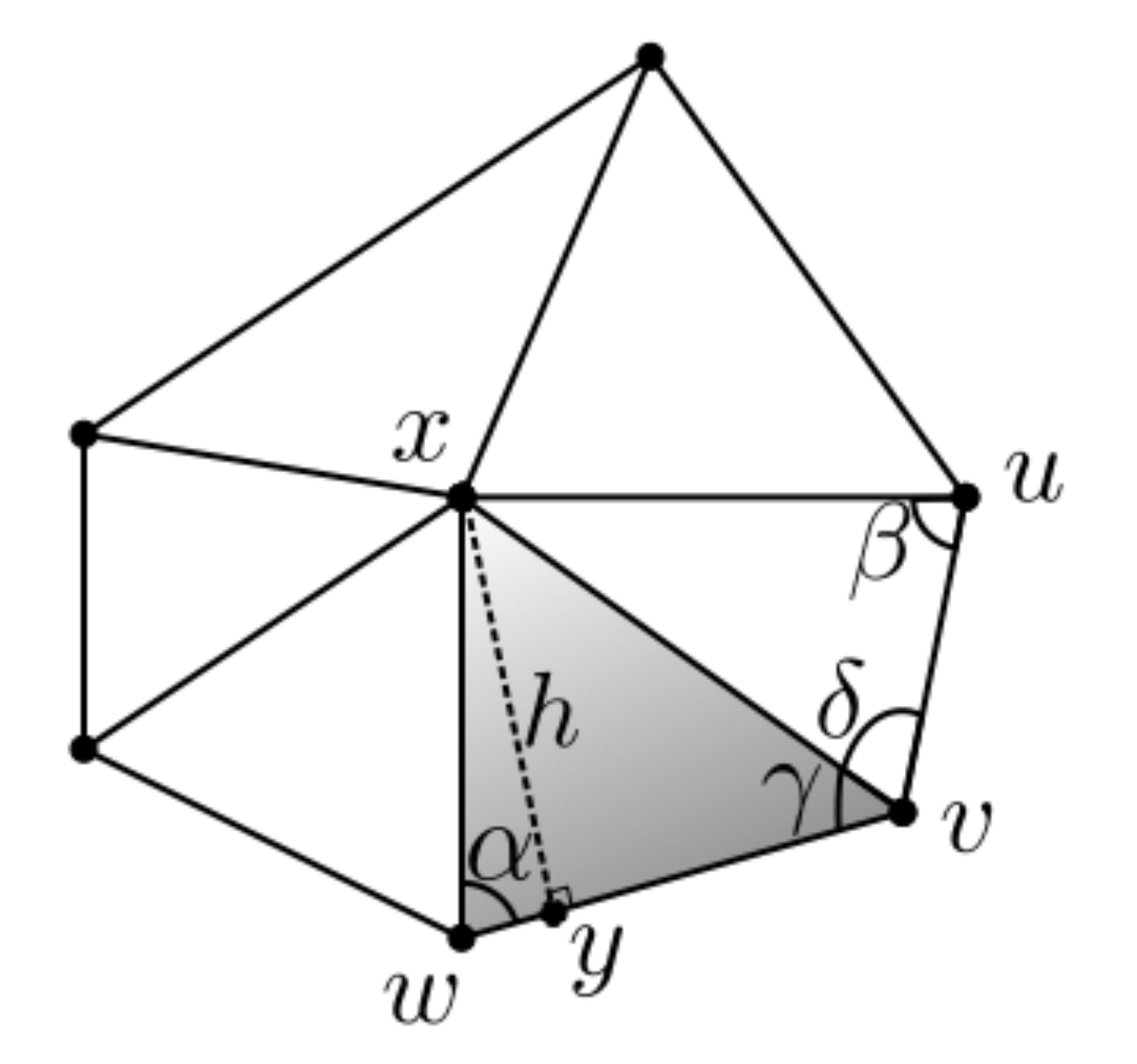}}
\qquad
\subfigure[]{\includegraphics[width=0.3\textwidth]{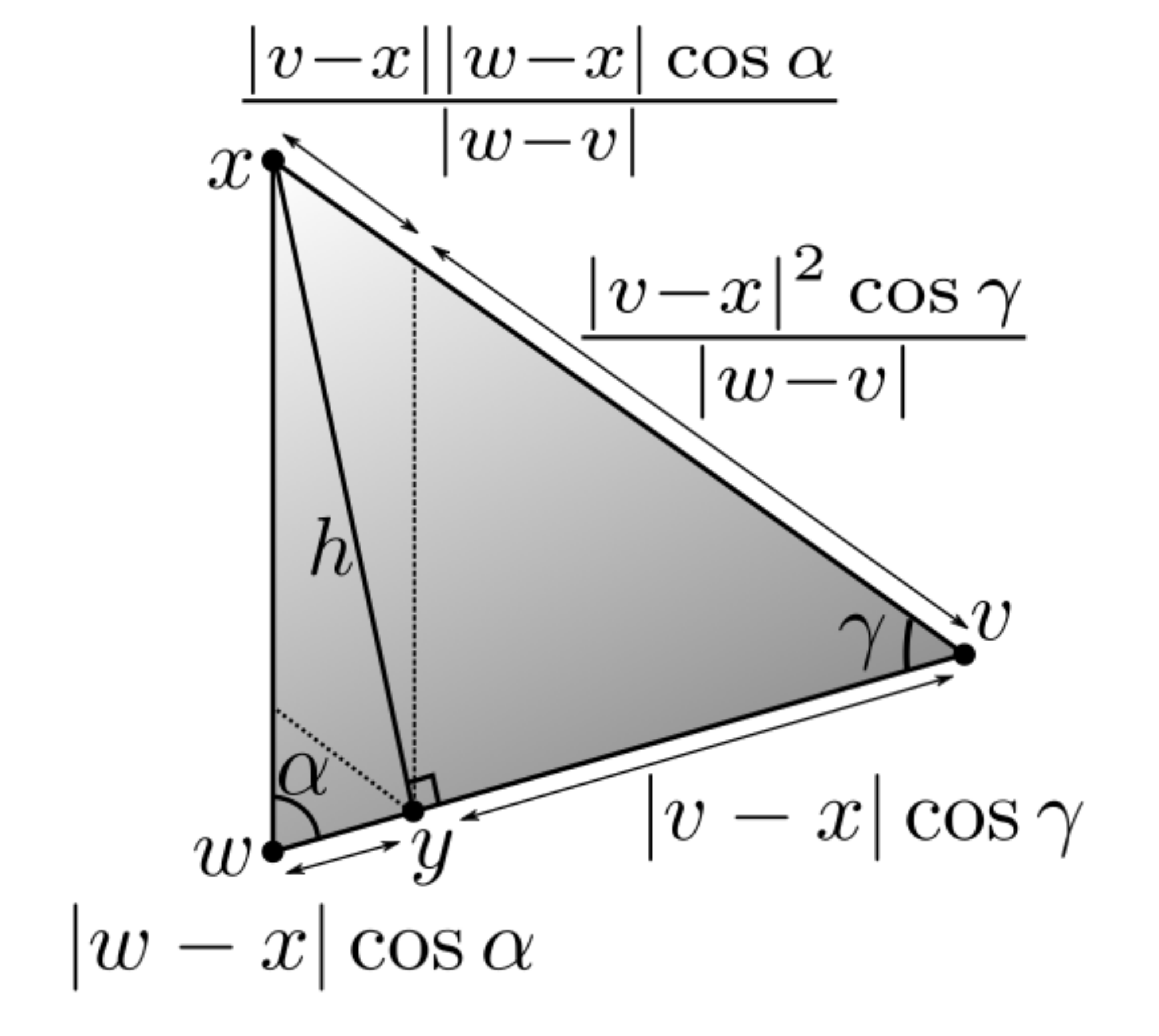}}
\caption{}
\label{figCotangent}
\end{figure}

\noindent Let $x \in \cV$ be a vertex, we denote by $\cV (x)$ the set of vertices conected to $x$ by an edge and $\cF(x)$ the set of faces containing $x$. For each $v \in \cV(x)$, $\alpha = \alpha_{xv}$ and $\beta = \beta_{xv}$ denote the angles opposite to the edge $(xv)$. See Figure~\ref{figCotangent}. With these notations, $\widehat{H}$ is defined by the Cotangent Formula
\begin{equation} \label{eq_functionalCotangentFormula}
< \widehat{H} , \phi_x > = \frac{1}{2} \sum_{v \in \cV(x)} \left( \cot \alpha_{xv} + \cot \beta_{xv} \right) (v-x).
\end{equation}

We recall that we can associate with $\cT$ the $2$--varifold
\[
V_{\cT} = \sum_{F \in \cF} \cH^2_{| F} \otimes \delta_{P_F} \: ,
\]
where $P_F$ is the plane containing the face $F$ (see Definition~\ref{dfn_polyhedralVarifold}). 
We also recall that given a Lipschitz function $g$ defined on $\R^{n}$, whose set of points of non-differentiability has zero $\V$ measure, we can compute $\delta V(g) := \big(\delta V(g\, e_{1}),\dots,\delta V(g\, e_{n})\big)$ as stated in Remark \ref{remk:morefirstvar}.

\begin{prop} \label{prop_CotangentFormula}
Let $x \in \cV$ be a vertex and let $\widehat{\phi_x} : \R^3 \rightarrow \R_+$ be a Lipschitz extension of $\phi_x$. Then,
\begin{equation}
\delta V_\cT (\widehat{\phi_x}) = - < \widehat{H} , \phi_x > = -\frac{1}{2} \sum_{v \in \cV(x)} \left( \cot \alpha_{xv} + \cot \beta_{xv} \right) (v-x).
\end{equation}
\end{prop}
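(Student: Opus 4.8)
The plan is to unfold the left‐hand side $\delta V_\cT(\widehat{\phi_x})$ using the definition of the first variation together with the explicit form $V_\cT = \sum_{F\in\cF}\cH^2_{|F}\otimes\delta_{P_F}$, to reduce the resulting sum of face integrals to a finite sum of constant vectors indexed by the faces containing $x$, and finally to identify this sum with the cotangent expression by means of a single elementary trigonometric identity on each triangle.

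First I would write $\delta V_\cT(\widehat{\phi_x}) = \big(\delta V_\cT(\widehat{\phi_x}e_1),\delta V_\cT(\widehat{\phi_x}e_2),\delta V_\cT(\widehat{\phi_x}e_3)\big)$ and, using $\Div_{P_F}(\widehat{\phi_x}e_j) = \langle \Pi_{P_F}\nabla\widehat{\phi_x},e_j\rangle$, collect components to obtain
\[
\delta V_\cT(\widehat{\phi_x}) = \sum_{F\in\cF}\int_F \nabla^{P_F}\widehat{\phi_x}\, d\cH^2\,.
\]
Since each face $F$ lies in a $2$-plane and $\widehat{\phi_x}|_F = \phi_x|_F$ is Lipschitz, Rademacher's theorem applied on the flat $F$ shows that $\nabla^{P_F}\widehat{\phi_x}$ is defined $\cH^2$-a.e.\ on $F$ and coincides there with the intrinsic gradient of the affine map $\phi_x|_F$ (cf.\ Remark~\ref{remk:morefirstvar}); in particular it is constant on $F$ and depends only on $\phi_x$, so the value $\delta V_\cT(\widehat{\phi_x})$ does not depend on the chosen Lipschitz extension. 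If $x$ is not a vertex of $F$ then $\phi_x$ vanishes at all three vertices of $F$, hence $\phi_x|_F\equiv 0$ and that face does not contribute; therefore
\[
\delta V_\cT(\widehat{\phi_x}) = \sum_{F\in\cF(x)}\cH^2(F)\,\nabla^{P_F}\phi_x\,.
\]

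Next, on a fixed $F\in\cF(x)$ with the two remaining vertices $v,w$, I would establish the per-face identity
\[
\cH^2(F)\,\nabla^{P_F}\phi_x = -\tfrac12\Big(\cot\gamma^F_w\,(v-x) + \cot\gamma^F_v\,(w-x)\Big)\,,
\]
where $\gamma^F_v,\gamma^F_w$ are the angles of $F$ at $v$ and $w$ (so $\gamma^F_w$ is the angle of $F$ opposite to the edge $[x,v]$, and $\gamma^F_v$ opposite to $[x,w]$). Both sides lie in $\operatorname{span}(v-x,w-x)$, so it is enough to match their scalar products with $v-x$ and with $w-x$. On the left, since $\phi_x|_F$ is affine with $\phi_x(x)=1$, $\phi_x(v)=\phi_x(w)=0$, one has $\nabla^{P_F}\phi_x\cdot(v-x) = \phi_x(v)-\phi_x(x) = -1$ and likewise $\nabla^{P_F}\phi_x\cdot(w-x) = -1$, so both products equal $-\cH^2(F)$. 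On the right, expanding $(v-x)\cdot(w-x) = |v-x|\,|w-x|\cos\gamma^F_x$ and using the law of sines in $F$ together with $\gamma^F_x+\gamma^F_v+\gamma^F_w = \pi$, both products again reduce to $-\cH^2(F)$; this is the only computation of substance and it amounts to the identity $\cos\gamma^F_w = -\cos(\gamma^F_x+\gamma^F_v)$.

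Finally I would sum the per-face identity over $F\in\cF(x)$. Assuming $x$ is an interior vertex of $M_\cT$ — which is implicit in the Cotangent Formula, since it attaches two opposite angles $\alpha_{xv},\beta_{xv}$ to each spoke — every edge $[x,v]$ with $v\in\cV(x)$ belongs to exactly two faces of $\cF(x)$, whose angles opposite to $[x,v]$ are precisely $\alpha_{xv}$ and $\beta_{xv}$. Collecting the coefficients of each $(v-x)$ then yields
\[
\delta V_\cT(\widehat{\phi_x}) = -\tfrac12\sum_{v\in\cV(x)}\big(\cot\alpha_{xv}+\cot\beta_{xv}\big)(v-x) = -\,<\widehat H,\phi_x>\,,
\]
which is the claim. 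I expect the main obstacle to be the first step — carefully justifying that the tangential derivative of a mere Lipschitz extension is unambiguous and extension-independent on each flat face, and that non-vertex faces drop out — together with the trigonometric identity of the second step; the summation in the third step is then purely combinatorial.
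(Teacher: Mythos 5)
Your proposal is correct and follows essentially the same route as the paper: reduce $\delta V_\cT(\widehat{\phi_x})$ to the sum $\sum_{F\in\cF(x)}\cH^2(F)\,\nabla^{P_F}\phi_x$ of constant per-face gradients, establish a per-face cotangent identity, and recombine edge by edge. The only (minor) difference is that you verify the per-face identity by pairing both sides against the edge vectors $v-x$, $w-x$ and invoking the law of sines, whereas the paper computes $\nabla^{P_F}\phi_x$ explicitly via the altitude from $x$ and decomposes the foot-of-altitude vector along the edges; your added remarks on extension-independence of the tangential gradient and on the vanishing of faces not containing $x$ are sound and, if anything, slightly more careful than the paper's treatment.
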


\begin{proof}
For each face $F \in \cF$, $\widehat{\phi_x}_{| F}$ is affine, so that $\nabla^{P_F} \widehat{\phi_x}$ is constant in $F$. Therefore,
\begin{align}
\delta V_\cT (\widehat{\phi_x}) & = \int \nabla^S \widehat{\phi_x} (y) \, dV(y,S) 
 = \sum_{F \in \cF} \int_{F} \nabla^{P_F} \widehat{\phi_x} (y) \, d \cH^2 (y) \nonumber \\
& = \sum_{F \in \cF(x)} \int_F \frac{-1}{h_F} \frac{y_F - x}{|y_F -x |} d \cH^2 (y) 
 = - \sum_F \cH^2 (F) \frac{-1}{h_F} \frac{y_F - x}{|y_F -x |} . \label{eq_cotan_1}
\end{align}
Let us consider a face $F \in \cF(x)$ whose vertices ($\neq x$) are denoted $w,v$ as in Figure~\ref{figCotangent}. Then
\begin{equation}
\cH^2 (F) \frac{-1}{h_F} \frac{y_F - x}{|y_F -x |}  = - \frac{1}{2} h_F |w-v| \frac{1}{h_F} \frac{y_F - x}{|y_F -x |} 
= - \frac{|w-v|}{2h_F}  (y_F - x). \label{eq_cotan_2}
\end{equation}
As
\[
y_F - x = \frac{|w-x|\cos \alpha}{|w-v|}  (v-x) + \frac{|v-x|\cos \gamma}{|w-v|} (w-x),
\]
we infer from \eqref{eq_cotan_1} and \eqref{eq_cotan_2} that
\begin{align*}
\cH^2 (F) \frac{-1}{h_F} \frac{y_F - x}{|y_F -x |} & =\frac{1}{2h_F}\left( |w-x|\cos \alpha   (v-x) +  |v-x|\cos \gamma \right) (w-x) \\
& = \frac{1}{2}\left( \frac{|w-x|\cos \alpha}{|w-x| \sin \alpha}  (v-x) +  \frac{|v-x|\cos \gamma}{|v-x| \sin \gamma} \right) (w-x) \\
& = \frac{1}{2} \left( \cot \alpha (v-x) + \cot \gamma (w-x) \right).
\end{align*}
and 
\begin{equation} \label{eq_cotan_3}
\delta V_\cT (\widehat{\phi_x}) = -\frac{1}{2} \sum_{v \in \cV(x)} \left( \cot \alpha_{xv} + \cot \beta_{xv} \right) (v-x).
\end{equation}
\end{proof}

\begin{remk}\rm
It is not difficult to check that
\[
\| V_\cT \| (\widehat{\phi_x}) = \int \phi \, d \| V_\cT \| = \frac{1}{3} \sum_{F \in \cF(x)} \| V_\cT \|(F)
\]
which allows us to define the discrete mean curvature at each vertex $x$ of the triangulation as
\[
H_{V_\cT}(x) = - \frac{\delta V_\cT (\widehat{\phi_x})}{\| V_\cT \| (\widehat{\phi_x})} = \frac{3}{2} \frac{\sum_{v \in \cV(x)} \left( \cot \alpha_{xv} + \cot \beta_{xv} \right)(v-x)}{\sum_{F \in \cF(x)} \text{Area} (F)}.
\]
\end{remk}

\section{Numerical simulations for 2D and 3D point clouds}\label{section:numerics}

In this section we provide numerical computations of the approximate mean curvature of various $2D$ and $3D$ point clouds. In particular, we illustrate numerically its dependence on the regularization kernel, the regularization parameter $\epsilon$, and the sampling resolution. Our purpose is not a thorough comparison with the many numerical approaches for computing the mean curvature of point clouds, triangulated meshes, or digital objects, this will be done in a subsequent paper for obvious length reasons.

Given a point cloud varifold $V_N = \sum_{j=1}^N m_j \delta_{x_j} \otimes \delta_{P_j}$, its orthogonal approximate mean curvature is given by 
\begin{align} 
H_{\rho,\xi,\epsilon}^{V_N,\perp}(x_{j_0}) & = \int_{P \in \G} \Pi_{P^\perp} H_{\rho,\xi,\epsilon}^{V_N}(x_{j_0}) \, d \nu_{x_{j_0}} (P) \nonumber \\ 
& = -\frac{\displaystyle \sum_{j=1}^N  \one_{ \{ |x_j - x_{j_0}|<\epsilon \}} m_j \rho^\prime \left(\frac{|x_j-x|}{\epsilon} \right) \, \Pi_{P_{j_0}^\perp} \left( \frac{\Pi_{P_j} (x_j-x_{j_0})}{|x_j-x_{j_0}|} \right)}{ \displaystyle \sum_{j=1}^N \one_{ \{ |x_j - x_{j_0}|<\epsilon \} } m_j\epsilon  \xi \left(\frac{|x_j-x_{j_0}|}{\epsilon} \right) }  \: .\label{eq_formulaModifiedMC_PointCloud0}
\end{align}
We focus on the orthogonal approximate mean curvature, for it is at a given resolution more robust with respect to inhomogeneous local distribution of points than the approximate mean curvature, as it will be illustrated in Section~\ref{sec:varyingdensity}, and as it can even be seen directly on simple examples. Take indeed a sampling $\{x_j\}_1^N$ of the planar line segment $[-1,1]\times\{0\}$ with more points having a negative first coordinate, and let $P_j=P=\{y=0\}$. Assume that there exists $j_{0}$ such that $x_{j_{0}} = (0,0)$. Then the sum of all vectors $\frac{\Pi_{P_j} (x_j-x_{j_0})}{|x_j-x_{j_0}|}$ is nonzero, whereas its projection onto $P^\perp$ is zero, which is consistent with the (mean) curvature of the continuous segment at the origin. 

The formula above involves densities $m_j$,  the computation of which for a given point cloud being a question we have not focused on up to now, despite it is an important issue.  Nevertheless, if we assume that $m_j = m(1+o(1))$ whenever $x_{j}$ belongs to the ball $B_{\e}$ and for some constant $m$ possibly depending on $B_{\e}$, then we can cancel $m_{j}$ from formula \eqref{eq_formulaModifiedMC_PointCloud0} up to a small error. This justifies the following formula approximating the value of  $H_{\rho,\xi,\epsilon}^{V_N,\perp}(x_{j_0})$:
\begin{align} 
H_{\rho,\xi,\epsilon}^{V_N,\perp}(x_{j_0}) &\ \simeq\  
\frac{-\displaystyle \sum_{j=1}^N  \one_{ \{ |x_j - x_{j_0}|<\epsilon \}}  \rho^\prime \left(\frac{|x_j-x|}{\epsilon} \right) \, \Pi_{P_{j_0}^\perp} \left( \frac{\Pi_{P_j} (x_j-x_{j_0})}{|x_j-x_{j_0}|} \right)}{ \displaystyle \sum_{j=1}^N \one_{ \{ |x_j - x_{j_0}|<\epsilon \} } \epsilon  \xi \left(\frac{|x_j-x_{j_0}|}{\epsilon} \right) }  \: .\label{eq_formulaModifiedMC_PointCloud}
\end{align}

The advantages of Formula~\eqref{eq_formulaModifiedMC_PointCloud} are numerous: it is very easy to compute, it does not require a prior approximation of local length or area, it does \textit{not depend on any orientation} of the point cloud (because the formula is grounded on varifolds which have no orientation) and as we shall see right now, it behaves well from a numerical perspective.

In the next subsection, we study how this formula behaves on $2D$ point cloud varifolds built from parametric curves, for different choices of radial kernels and various sampling resolutions. The last subsection is devoted to $3D$ point clouds.

\subsection{Orthogonal approximate mean curvature of sampled parametric 2D curves}\label{section:numerical}

Let us start with some comments concerning the implementation. While the algorithmic complexity is linear in the number of points, a point cloud in 3D typically contains a huge number of points, so we choose to handle the implementation in C++ for both 2D and 3D cases. We also use \emph{nanoflann}~\cite{nanoflann} to build a KD-tree structure which can easily manage neighbor search for defining local neighborhoods, and perform local regression by means of \emph{eigen}~\cite{eigen}. Point cloud visualization can be easily done with \emph{Matlab}, \emph{CloudCompare}~\cite{cloudcompare} or \emph{Meshlab}~\cite{meshlab}.

\subsubsection{Test shapes, sample point cloud varifolds, and kernel profiles}
We test the numerical behavior of formula \eqref{eq_formulaModifiedMC_PointCloud} for different choices of 2D parametric shapes, kernel profiles $\rho,\xi$, number $N$ of points in the cloud, and values of the parameter $\epsilon$ used to define the kernels $\rho_\epsilon$ and $\xi_{\e}$. We denote as $N_{\text{neigh}}$ the average number of points in a ball of radius $\epsilon$ centered at a point of the cloud. The 2D parametric test shapes are (see Figure~\ref{2Dpts}):
\begin{enumerate}[$(a)$]
\item A circle of radius $0.5$ parametrized as $(x(t),y(t))=0.5(\cos(t),\sin(t))$, $t\in[0,2\pi]$;
\item An ellipse parametrized by
$x(t)=a\,\cos(t)$, $y(t)=b\,\sin(t)$, $t \in (0,2\pi)$ with $a=1$ and $b=0.5$;
\item A "flower" parametrized by $r(\theta) = 0.5(1+0.5\sin(6\theta+\frac{\pi}{2}))$;
\item An "eight" parametrized by $x(t)=0.5 \, \sin(t)\left( \cos t + 1 \right)$, $y(t)=0.5 \, \sin(t)\left( \cos t - 1 \right)$, $t \in (0,2\pi)$.
\end{enumerate}

\begin{center}
\setcounter{subfigure}{0}
\begin{figure}[!htp]
\subfigure[]{\includegraphics[width=0.24\textwidth]{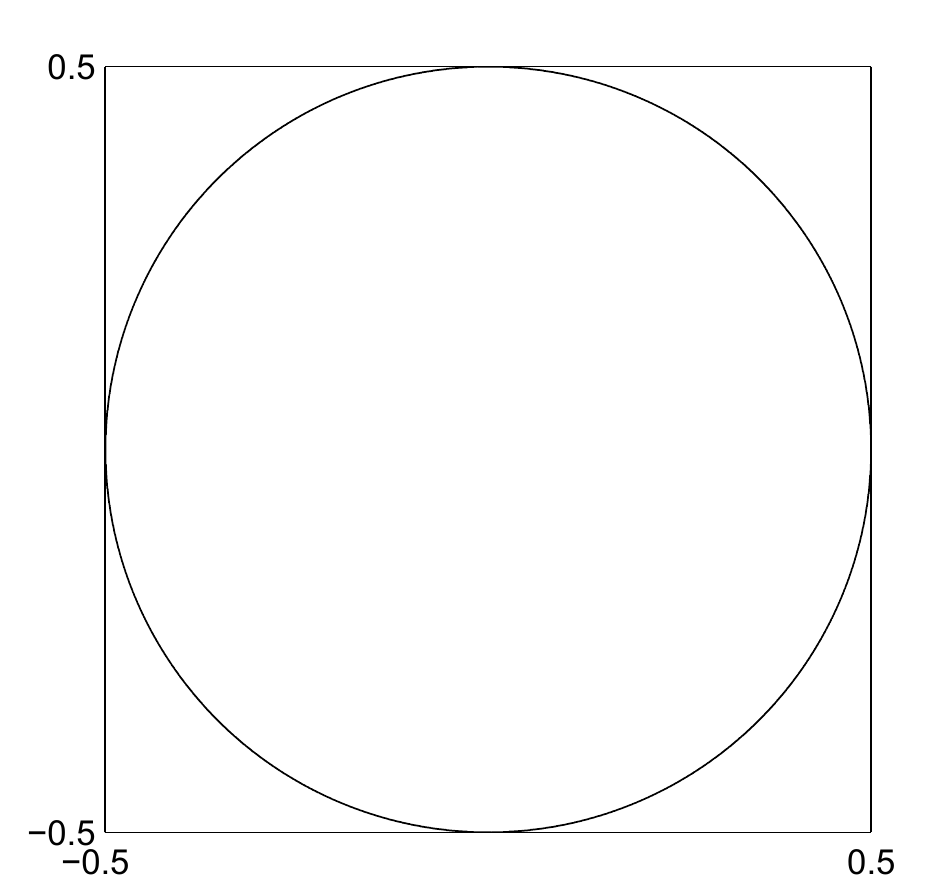}}
\subfigure[]{\includegraphics[width=0.24\textwidth]{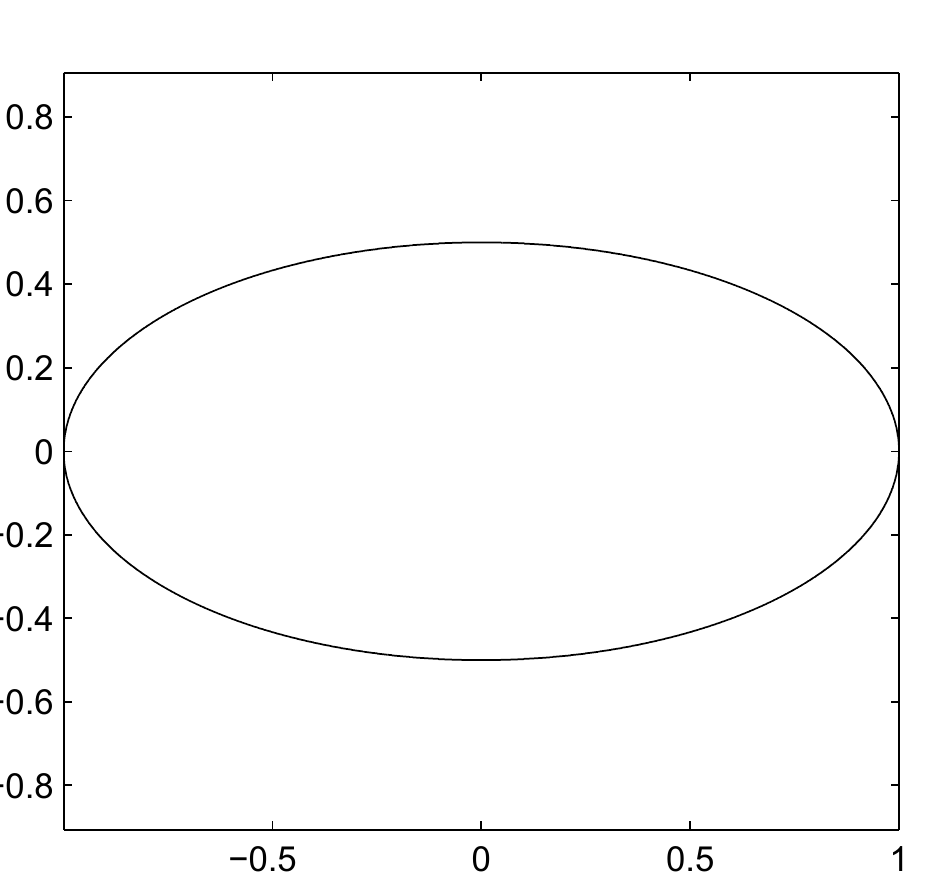}}
\subfigure[]{\includegraphics[width=0.24\textwidth]{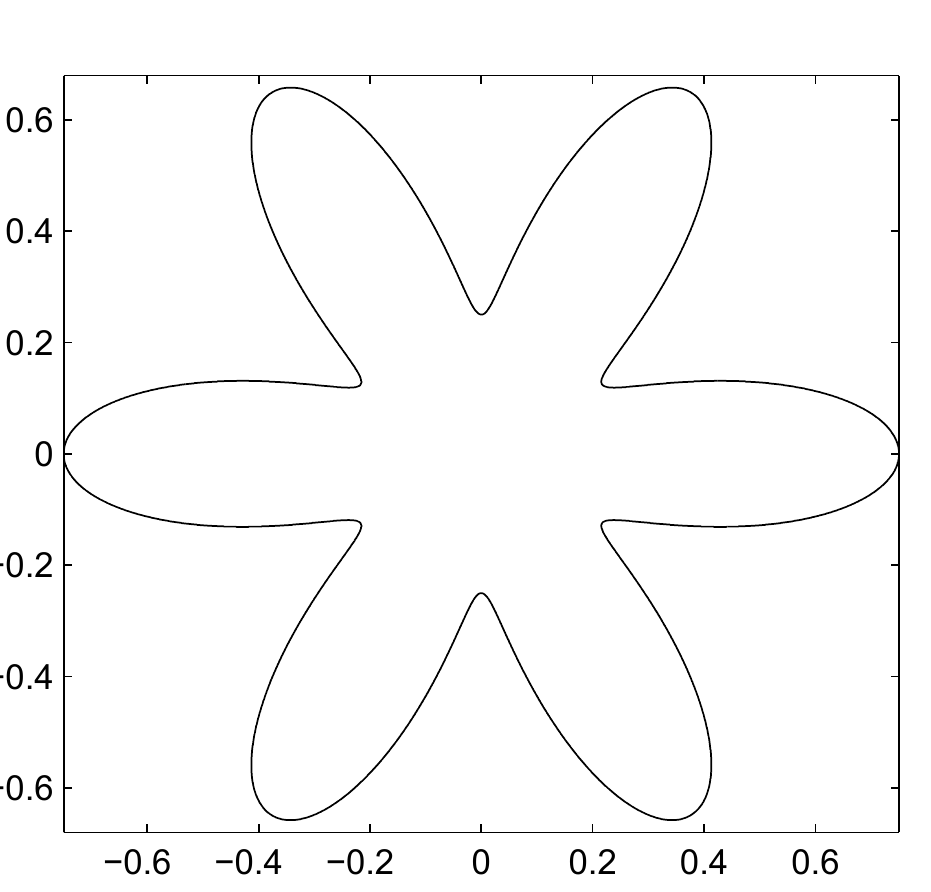}}
\subfigure[]{\includegraphics[width=0.22\textwidth]{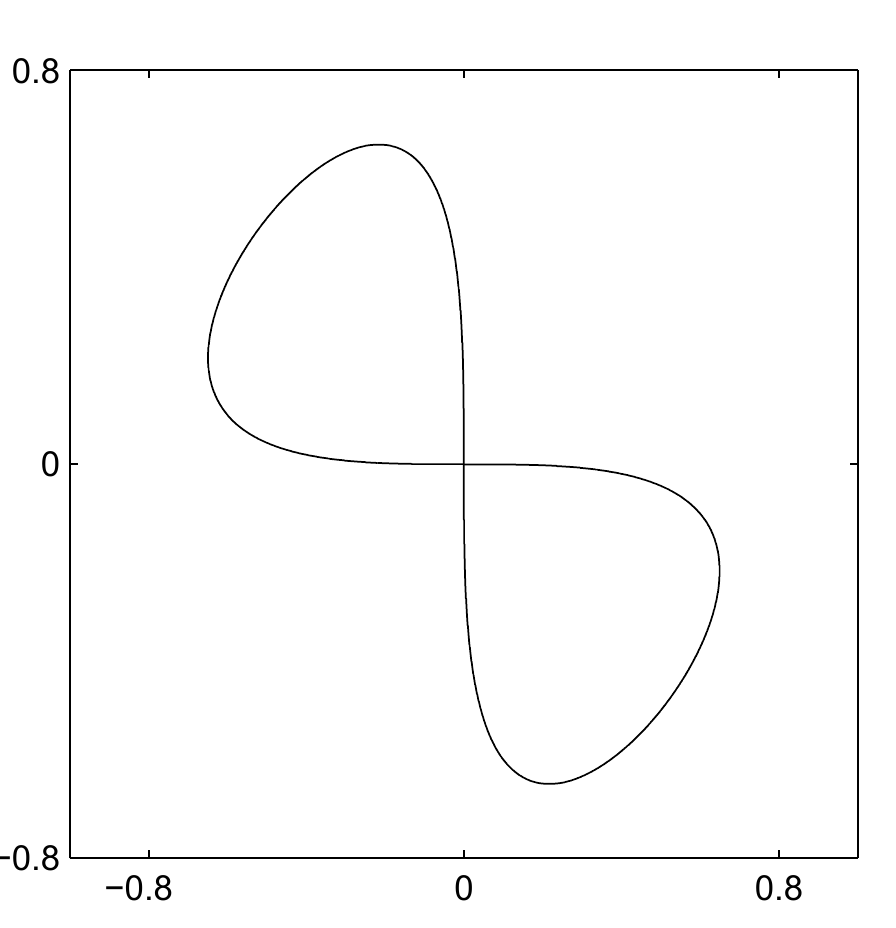}}
\caption{2D parametric test shapes\label{2Dpts}}
\end{figure}
\end{center}

We test formula \eqref{eq_formulaModifiedMC_PointCloud} with some profiles $\rho, \xi$ defined on $[0,1]$:
\begin{itemize}
\item the ``tent'' kernel pair $(\rho_{tent},\rho_{tent})$, with $\rho_{tent} (r) = (1-r)$;
\item the ``natural tent'' pair $(\rho_{tent},\xi_{tent})$, with $\xi_{tent} (r) = - \frac{1}{n} r \rho_{tent}^\prime (r) = r$;
\item the ``exp'' kernel pair $(\rho_{\exp},\rho_{exp})$, with $\rho_{exp} (r) = \exp \left( - \frac{1}{1-r^2} \right)$;
\item the ``natural exp'' pair $(\rho_{exp},\xi_{exp})$, with $\xi_{exp} (r) = - \frac{1}{n} r \rho_{exp}^\prime (r) $.
\end{itemize}
Notice that $\rho_{exp}, \xi_{exp}$ satisfy Hypothesis~\ref{rhoepsxieps}; on the contrary, $\rho_{tent}$ is only in $\xW^{1,\infty}$ and $\xi_{tent}$ is not even continuous.

To define point clouds from samples of these parametric test shapes, we use two approaches:
\begin{itemize}
\item either we compute the exact tangent line $T(t)\in G_{1,2}$ at the $N$ points $\{ 0, h, 2h, \ldots , (N-1)h \}$ for $h = \frac{2 \pi}{N}$, and we set
\begin{equation} \label{eq_parametric_varifold}
V_N = \sum_{j=1}^N m_j \delta_{(x(jh),y(jh))} \otimes \delta_{T(jh)} \: ,
\end{equation}
\item or we compute by linear regression a tangent line $T^{app} \in G_{1,2}$ at each sample point and we set
\begin{equation} \label{eq_parametric_varifold_approx}
V_N = \sum_{j=1}^N m_j \delta_{(x(jh),y(jh))} \otimes \delta_{T^{app}(jh)} \: .
\end{equation}
\end{itemize}

\noindent For all shapes under study, the exact vector curvature $H(t)$ can be computed explicitly and evaluated at $jh, \, j=0 \ldots N-1$. To quantify the accuracy of approximation \eqref{eq_formulaModifiedMC_PointCloud}, we use the following 
relative average error
\begin{equation} \label{eq_rel_average_error}
E^{rel} = \frac{1}{N} \sum_{j=1}^N \frac{| H_{\rho,\xi,\epsilon}^{V_{N}} (x_j) - H(jh)|}{\| H \|_\infty} \: ,
\end{equation}
where $x_j = (x(jh),y(jh))$.

\subsubsection{Numerical illustration of orthogonal approximate mean curvature}
We first test formula~\eqref{eq_formulaModifiedMC_PointCloud} on the ellipse and on the flower with exact normals. We represent in Figure \ref{fig_flower} the curvature vectors computed for $N=10^5$ points and $\epsilon = 0.001$ with the natural kernel pair $(\rho_{exp},\xi_{\exp})$. Arrows indicate the vectors and colors indicate their norms. Remark that the sample points are obtained from a uniform sampling in parameter space (polar angle), therefore sample points are not regularly spaced on the ellipse or the flower. Still, these spatial variations are negligible and \eqref{eq_formulaModifiedMC_PointCloud} provides a  good approximation of the continuous mean curvature, as we already know from Theorem~\ref{thm:convergence3}, and as it will be illustrated numerically in the next section.

\setcounter{subfigure}{0}
\begin{figure}[!htbp]
\begin{center}
\subfigure[]{\includegraphics[width=0.35\textwidth]{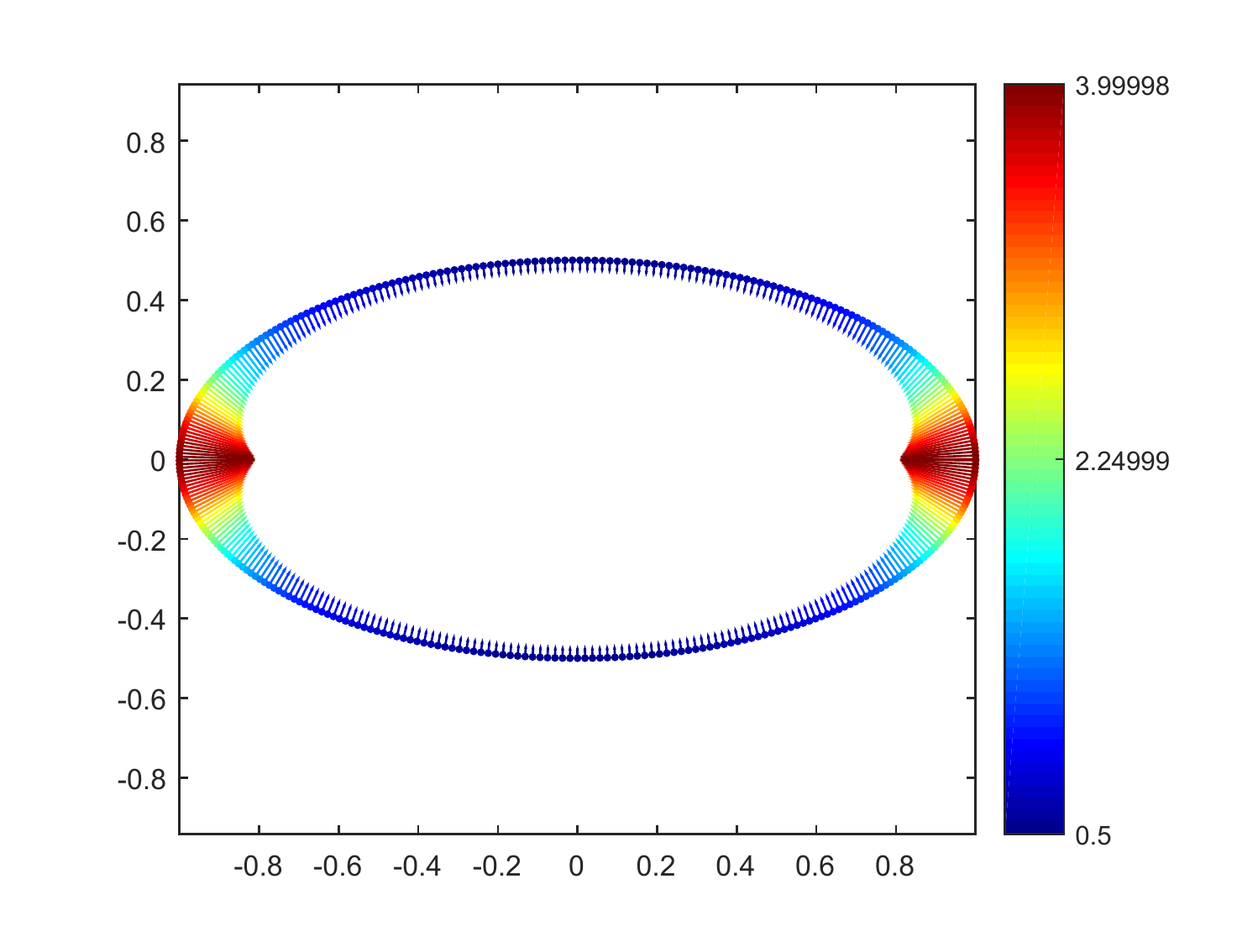}}
\subfigure[]{\includegraphics[width=0.35\textwidth]{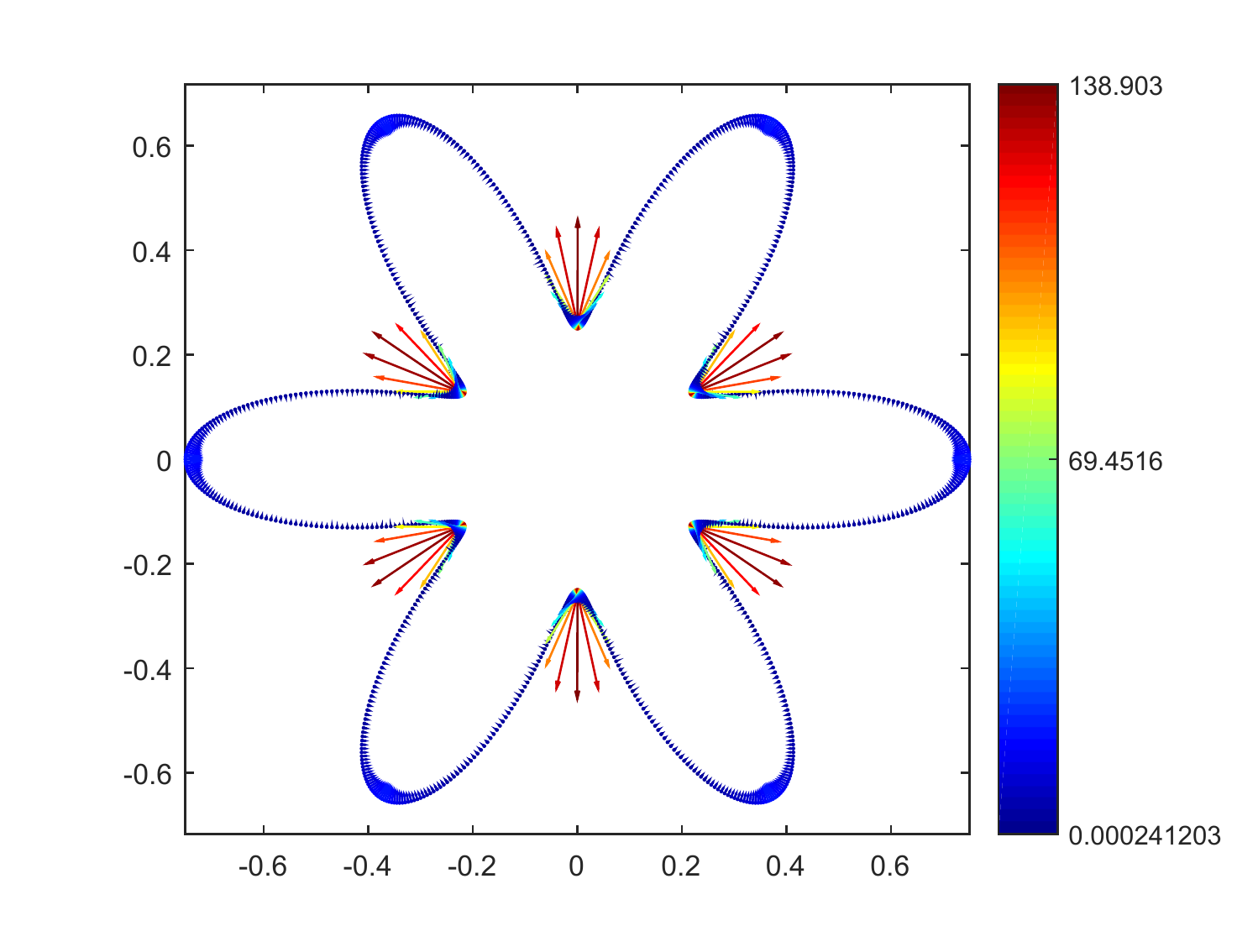}}
\subfigure[]{\includegraphics[width=0.25\textwidth]{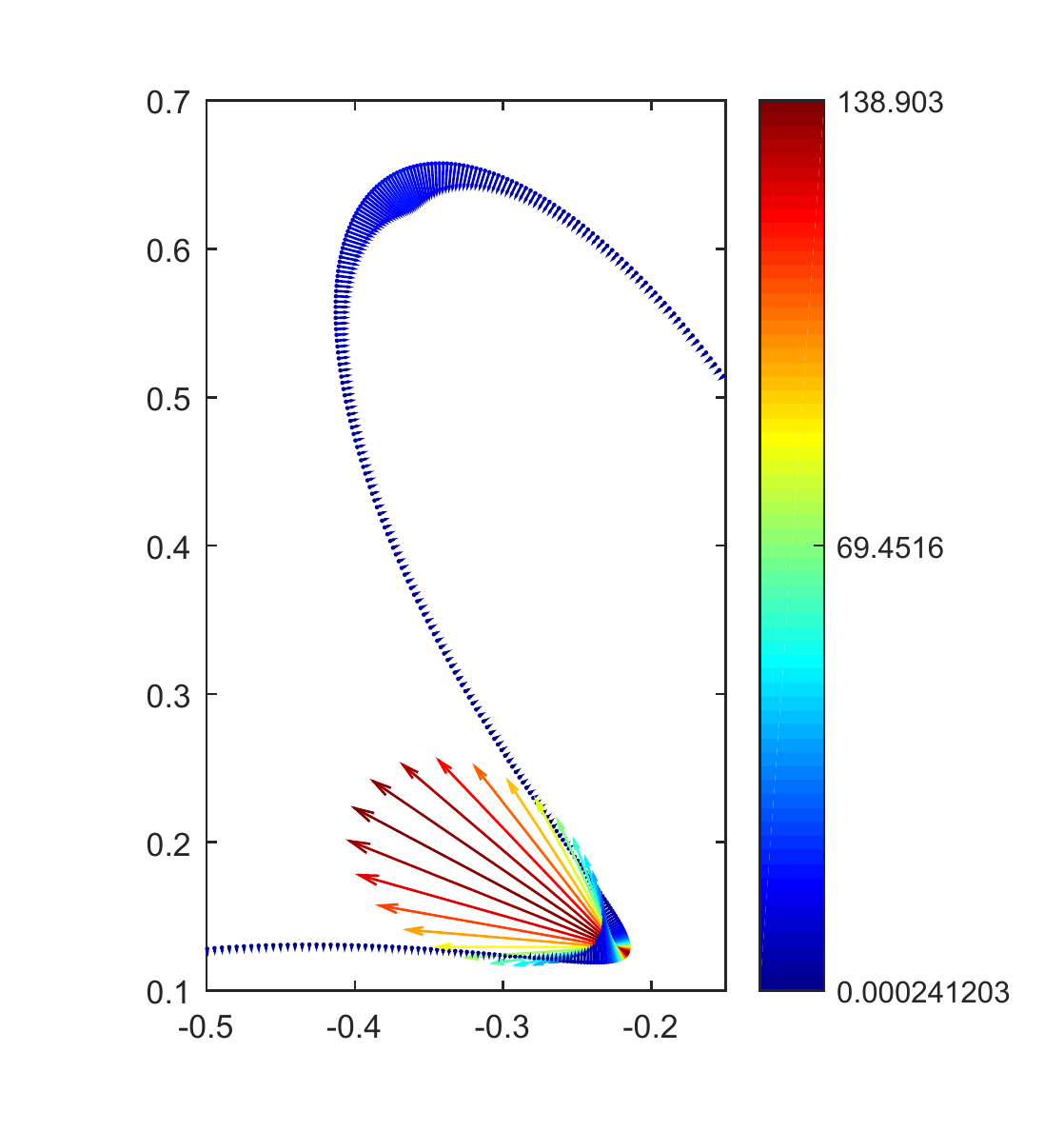}}
\end{center}
\caption{Orthogonal approximate curvature vectors along the discretized ellipse and flower. Arrows indicate the curvature vectors and colors indicate their norms.} \label{fig_flower}
\end{figure}

\subsubsection{Convergence rate}
\label{sect:convrate}

In this section, we compute and represent the evolution with respect to the number of points $N$ of the relative average error $E^{rel}=\frac{1}{N} \sum_{j=1}^N \frac{| H_\epsilon^N (x_j) - H(t_j)|}{\| H \|_\infty}$ for the orthogonal approximate mean curvature vector ~\eqref{eq_formulaModifiedMC_PointCloud} of point cloud varifolds sampled from the parametric flower. We compare the convergence rate of this error for the above choices of kernels pairs; more specifically 
we compute the convergence error for the varifold defined in \eqref{eq_parametric_varifold} both in the case where $T(jh)$ is the exact tangent and in the case where $T^{app}$ is computed by regression in an $R$--neighbourhood, with $R=\e/2$ (this situation is labelled as "regression" in all figures).

Theorem~\ref{thm:convergence3} guarantees the convergence under suitable assumptions of the orthogonal approximate mean curvature $H_{\rho,\xi,\e_i}^{V_i,\perp}$, and even provides a convergence rate. First, it is not very difficult to check that in the case where the point clouds are uniform samplings of a smooth curve, then the parameters $d_{i,1}$ and $\eta_i$ of \eqref{eq_thm_pointwise_cvSmooth_hyp2} are of order respectively $\frac{1}{N}$. As we already pointed out, our sampling is not globally uniform, but locally almost uniform and we expect the same order for $d_{i,1}$ and $\eta_i$. As for $d_{i,2}$ in \eqref{eq_thm_pointwise_cvSmooth_hyp3}, if the tangents are exact, then $d_{i,2}$ is also of order $\frac{1}{N}$, otherwise, it depends essentially on the radius of the ball used to perform the regression. Here we set $R=\e/2$, which is not a priori optimal. If we want to estimate the mean curvature at some point $x$ of the curve, then we will apply formula~\eqref{eq_formulaModifiedMC_PointCloud} to the closest point in the point cloud, which is at distance of order $\frac{1}{N}$ to $x$ (this corresponds to what is denoted $|z_i -x|$ in Theorem~\ref{thm:convergence3}). To summarize, according to these considerations together with Theorem~\ref{thm:convergence3}, we expect to observe convergence under the assumption
\[
\frac{1}{N \e} \rightarrow 0\,,
\]
with a convergence rate of order $\displaystyle \frac{1}{N \e} + \e$, at least in the case where the tangents are exact. We start with studying two different cases: first with $\frac{1}{N \e} = N^{-1/4}$, where we expect convergence with rate at least $N^{-1/4}$, and then with $\frac{1}{N \e} = 0.01$, for which Theorem~\ref{thm:convergence3} is not sufficient to guarantee that convergence holds. In both cases, we focus on $\frac{1}{N \e}$ which is the leading term.

\setcounter{subfigure}{0}
\begin{figure}[!htbp]
\begin{center}
\subfigure[\label{fig_Neps_100}]{\includegraphics[width=0.40\textwidth]{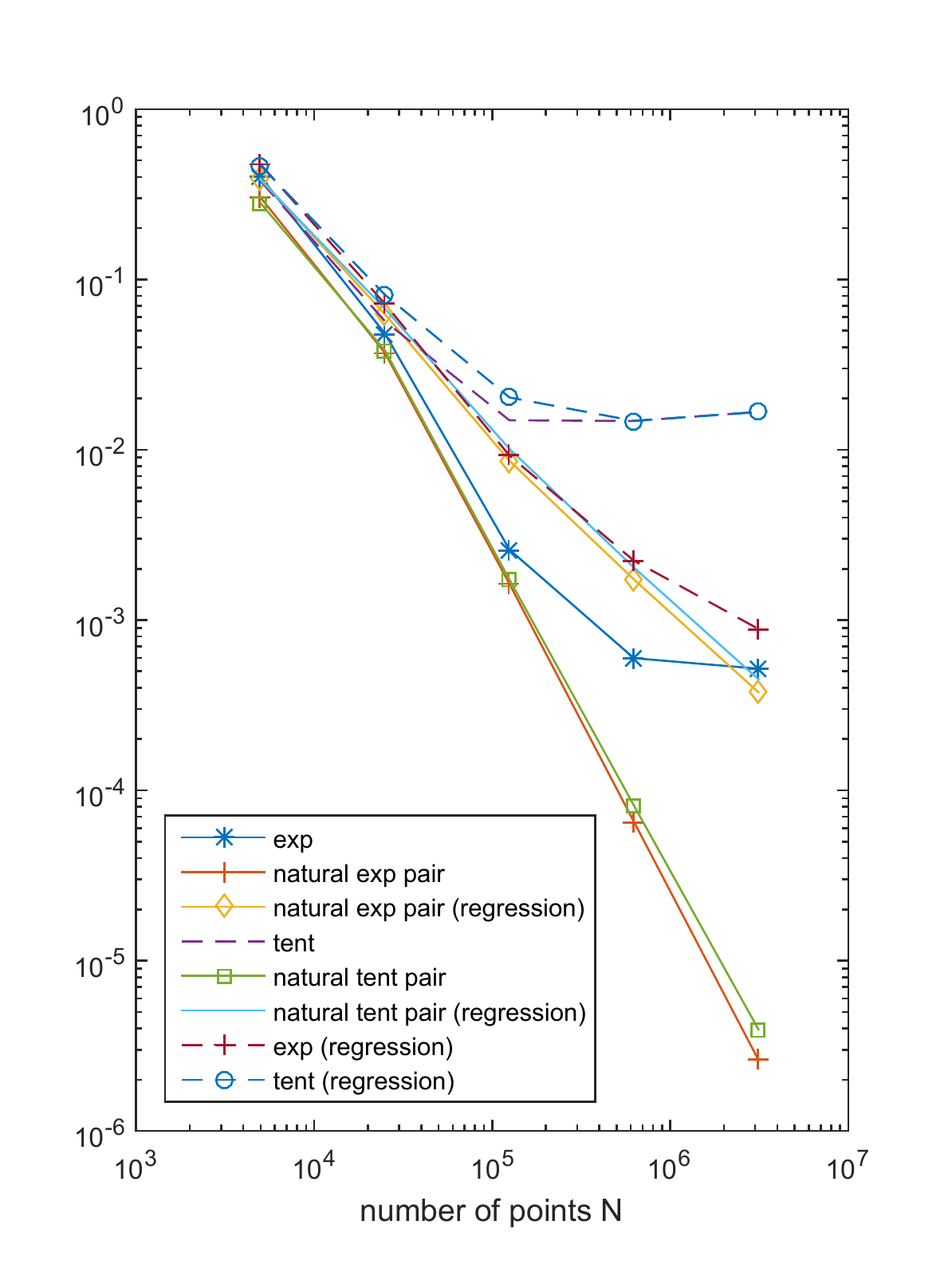}}
\subfigure[\label{fig_Neps_10_34}]{\includegraphics[width=0.40\textwidth]{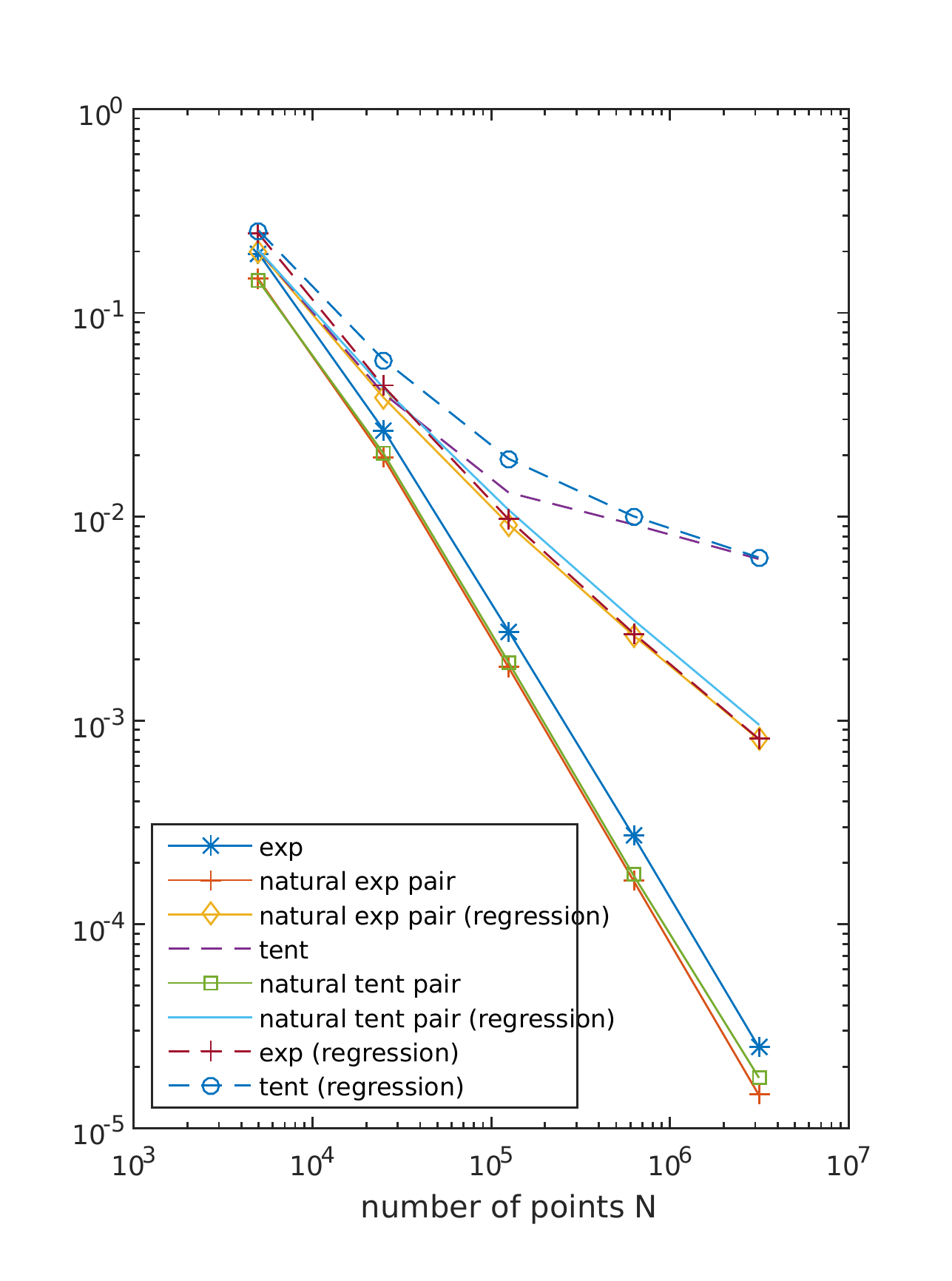}}
\caption{Average error (log-log scale) for the orthogonal approximate mean curvature of the subsampled parametric flower, for increasing values of $N$, and with either $\epsilon=\frac{100}{N}$ (left) or $\epsilon= \left( \frac{10}{N} \right)^{3/4}$ (right). The number of points in the neighborhood used for estimating the curvature is constant for the left experiment, and scales as $10N^{1/4}$ for the right experiment.}
\end{center}
\end{figure}

We use a log-log scale to represent the resulting relative average error \eqref{eq_rel_average_error} as a function of the number of sample points $N$ for $\epsilon = \frac{100}{N}$ (Figure~\ref{fig_Neps_100}) and $\epsilon = \left( \frac{10}{N} \right)^{3/4}$ (Figure~\ref{fig_Neps_10_34}).
We remark that the number $N_{\mbox{\tiny neigh}}$ of points in a neighborhood $B_{\e}(x)$ is proportional to $\epsilon N$, which takes the values $100$ and $10^{3/4}N^{1/4}$, respectively, for the above choices of $\epsilon$. Interestingly, the experiments show a good convergence rate when choosing a natural kernel pair, even in the cases when $\frac{1}{N \e}$ is constant (thus when it does not converge to $0$!). Furthermore, the convergence using natural kernel pairs and approximate tangents computed by regression is even faster than when using exact tangents and the tent kernel. We recall that the tent kernel does not satisfy Hypothesis \ref{rhoepsxieps} since it is only Lipschitz, nevertheless the corresponding natural pair $(\rho_{tent},\xi_{tent})$ shows the same convergence properties as the smooth natural pair $(\rho_{exp},\xi_{exp})$. This suggests that the (NKP) property is even more effective than the smoothness of the kernel profiles. Finally, when the tangents are not exact the convergence is slower. This is consistent with the fact that parameter $d_{i,2}$ in \eqref{eq_thm_pointwise_cvSmooth_hyp3} depends on the radius $R$ of the ball used to compute the regression tangent line (we recall that $R=\e/2$) which represents an additional parameter to be possibly optimized.

\subsubsection{Varying density and projection onto the normal}
\label{sec:varyingdensity}
\begin{figure}[!htbp]
\begin{center}
\includegraphics[width=0.40\textwidth]{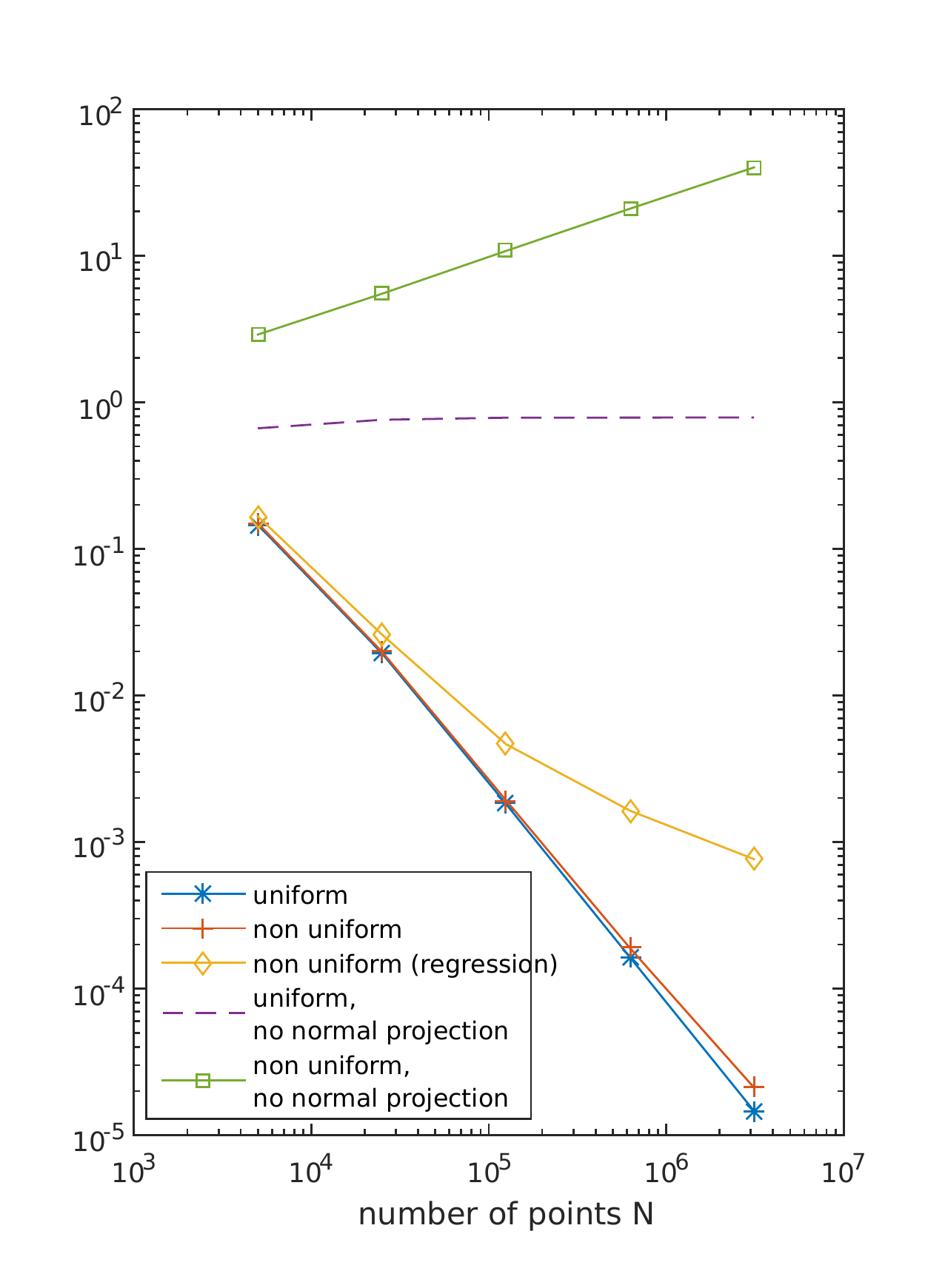}
\caption{Average error (log-log scale) for the approximate mean curvature of the subsampled parametric flower, for increasing values of $N$ and $\epsilon=(10/N)^{3/4}$. Comparison of the behavior of $H_{\rho,\xi,\e}^{V_N}$ and $H_{\rho,\xi,\e}^{V_N,\perp}$ on an (almost) uniform and a non uniform sampling of the flower.\label{fig_DensityNotUniform}
}
\end{center}
\end{figure}

In all previous numerical tests,  we assumed that $m_j \simeq m$ is true, at least locally, which yields the simplified formula \eqref{eq_formulaModifiedMC_PointCloud}. This assumption makes sense for a point cloud with almost uniform distribution of masses, but is less realistic in the other cases. However, when considering a smooth surface $M$ endowed with a smoothly varying density function $\theta$, the tangential component of the generalized mean curvature of the associated varifold is non-zero in general (and related to the tangential gradient of $\theta$) while the normal component still coincides with the classical mean curvature of $M$. Therefore formula \eqref{eq_formulaModifiedMC_PointCloud} allows to cancel the tangential perturbations artifacts due to the non-uniformity of the sampling. 

In order to illustrate this property of the orthogonal approximate mean curvature $H_{\rho,\xi,\e}^{V,\perp}$, we consider a non-uniform discretization of the flower. To this aim, starting from the uniform discretization $jh$ of the parametrization interval $[0,2\pi]$, with $j=0,\dots,N-1$, we define $t_{j} = (j + n_{j})h$ where $n_{j}$ are i.i.d.  Gaussian random variables with zero mean and variance $1$. Then we define $V_{N}$ exactly as in \eqref{eq_parametric_varifold} or in \eqref{eq_parametric_varifold_approx} replacing $jh$ with $t_{j}$. The relative error $E^{rel}$ is computed also by replacing $jh$ with $t_{j}$ in \eqref{eq_rel_average_error}. We perform the tests using the natural kernel pair $(\rho_{exp},\xi_{exp})$. The aim is to compare the behavior of $H_{\rho,\xi,\e}^{V,\perp}$ (default choice in Figure \ref{fig_DensityNotUniform}) and $H_{\rho,\xi,\e}^{V}$ (labeled as ``no normal projection'' in Figure \ref{fig_DensityNotUniform}) on the flower, in both uniformly and non-uniformly discretized cases (respectively labeled as ``uniform'' and ``non uniform'' in Figure \ref{fig_DensityNotUniform}). We also consider the sub-case of approximate tangents in the non-uniformly discretized case, but only for $H_{\rho,\xi,\e}^{V,\perp}$ since we observed that the error associated with $H_{\rho,\xi,\e}^{V}$ does not converge to zero, even when the tangents are exact.

Figure~\ref{fig_DensityNotUniform} shows the plots of the relative errors  computed with respect to the number of points $N$ in a log-log scale, with $\e = (10/N)^{3/4}$. On the one hand we observe that in both uniformly and non-uniformly discretized cases the error associated with $H_{\rho,\xi,\e}^{V}$ does not converge to zero, or even diverges, which is not incompatible with Theorem \ref{thm:convergence2} since we have that $\frac{d_{i}}{\e_{i}^{2}} \sim \sqrt N$, which of course is not infinitesimal. On the other hand, for the reason given above, the convergence of $H_{\rho,\xi,\e}^{V,\perp}$ is comparable in both uniformly and non-uniformly discretized cases, even when tangent planes are computed approximately by regression.

\subsubsection{Convergence rate in the $2D$--smooth case}

Up to now, it has been evidenced that in the smooth case it is reasonable to use $H_{\rho,\xi,\e}^{V,\perp}$ with a smooth natural kernel pair. The experiments in Figure~\ref{fig_convergenceRate} are therefore obtained for $H_{\rho,\xi,\e}^{V,\perp}$ computed with the natural kernel pair $(\rho_{exp},\xi_{exp})$. As already mentioned, Theorem~\ref{thm:convergence3} gives a convergence speed of order at least $\frac{1}{N \e}$ in the case where the tangents are exact. When the tangents are computed by regression, then the convergence is of order (at least) $\frac{d_2}{\e}$ where $d_2$ is the maximal pointwise error on the tangents resulting from the regression. We thus plot in Figure~\ref{fig_convergenceRate} the decay of the relative error $E^{rel}$ with respect to this ratio $\frac{1}{N \epsilon}$ for different choices of $\e, \, N$, with either exact or approximate tangents (computed by regression in a ball of radius $R$), and with or without an additional Gaussian white noise of variance $\sigma = \frac{1}{N}$. More precisely, we fix i.i.d. Gaussian random variables $(n_j^1)_j, \, (n_j^2)_j$ with zero mean and variance $\sigma$, and we define from $V_N$ in \eqref{eq_parametric_varifold} a noisy point cloud varifold $V_N^\sigma$ as
\begin{equation*}
\text{either}\quad V_N^\sigma = \sum_{j=1}^N m_j \delta_{((x(jh),y(jh)+(n_j^1,n_j^2))} \otimes \delta_{T(jh)} \quad \text{or} \quad V_N^\sigma = \sum_{j=1}^N m_j \delta_{((x(jh),y(jh)+(n_j^1,n_j^2))} \otimes \delta_{T_j^{app}}
\end{equation*} 
Here, $T_j^{app}$ is computed by linear regression from the noisy positions $\{ (x(kh),y(kh)+(n_k^1,n_k^2) \}_k$ in a ball of radius $R$. The relative error is defined as
\[
E^{rel}=\frac{1}{N} \sum_{j=1}^N \frac{| H_{\rho,\xi,\e}^{V_N^\sigma,\perp} ((x(jh),y(jh)+(n_j^1,n_j^2)) - H(jh)|}{\| H \|_\infty} \: .
\]

\begin{figure}[!htbp]
\begin{center}
\includegraphics[width=0.80\textwidth]{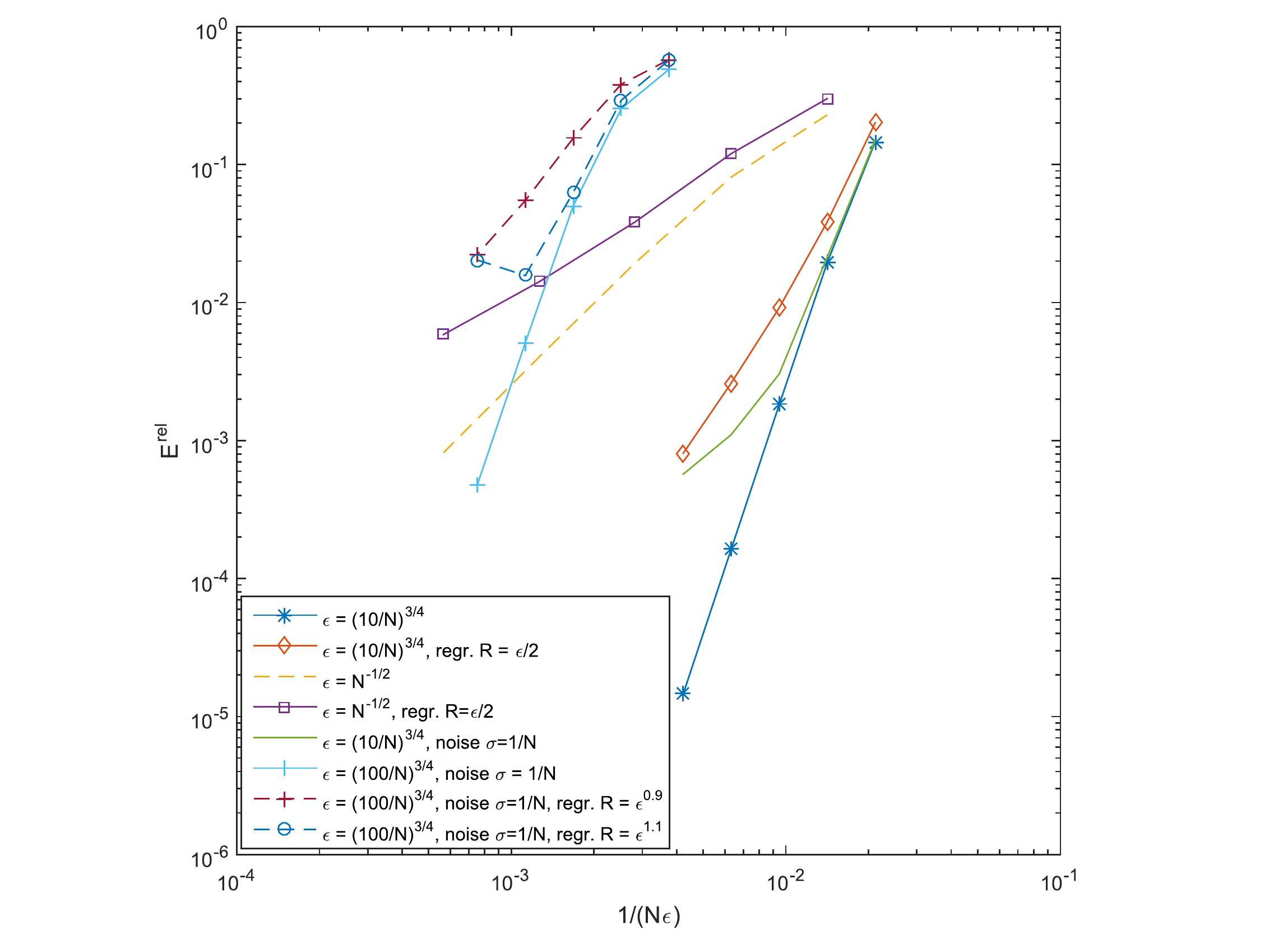}
\caption{Average error $E^{rel}$ (log-log scale) for the approximate mean curvature $H_{\rho_{exp},\xi_{exp},\e}^{V_N}$ of the subsampled parametric flower, plotted with respect to $\frac{1}{N \e}$.\label{fig_convergenceRate}}
\end{center}
\end{figure}

We observe in Figure~\ref{fig_convergenceRate} that the convergence is at least of order $1$ with respect to $\frac{1}{N \e}$ as foreseen by Theorem~\ref{thm:convergence3}, and even quite better in the case where there is no additional noise. In this latter case, when the tangents are exact, convergence is still of good quality. But when the tangents are computed by regression in a ball of radius $R$, the relative error $E ^{rel}$ is very sensitive to the regression error and thus to $R$. It seems that, in some cases where additional white noise is introduced, taking $R$ larger than $\e$ ($R=\e^{9/10}$ for instance) produces a lower error.

\subsection{The approximate mean curvature near singularities}\label{cross}

In this section, we illustrate the specific features of the approximate mean curvatures $H_{\e,\rho,\xi}^{V}$ and $H_{\e,\rho,\xi}^{V,\perp}$ near singularities. Consistently with the properties of the classical generalized mean curvature of varifolds, $H_{\e,\rho,\xi}^{V}$ and $H_{\e,\rho,\xi}^{V,\perp}$ both preserve the zero mean curvature of straight crossings, as confirmed by the experiment on the "eight" (see Figure~\ref{fig_eight}). In this case using $H_{\e,\rho,\xi}^{V,\perp}$ does not affect the reconstruction of the zero curvature at the crossing point, while it has the advantage of being more consistent at regular points (see the discussion in Section \ref{sec:varyingdensity}). In Figure~\ref{fig_eight} we plot the curvature vectors and intensities computed using the natural kernel pair $(\rho_{exp},\xi_{exp})$ on the eight curve sampled with $N=10000$ points and exact tangents, for $\e=100/N=0.01$.

\begin{figure}[!htbp]
\centering
\includegraphics[width=0.65\textwidth]{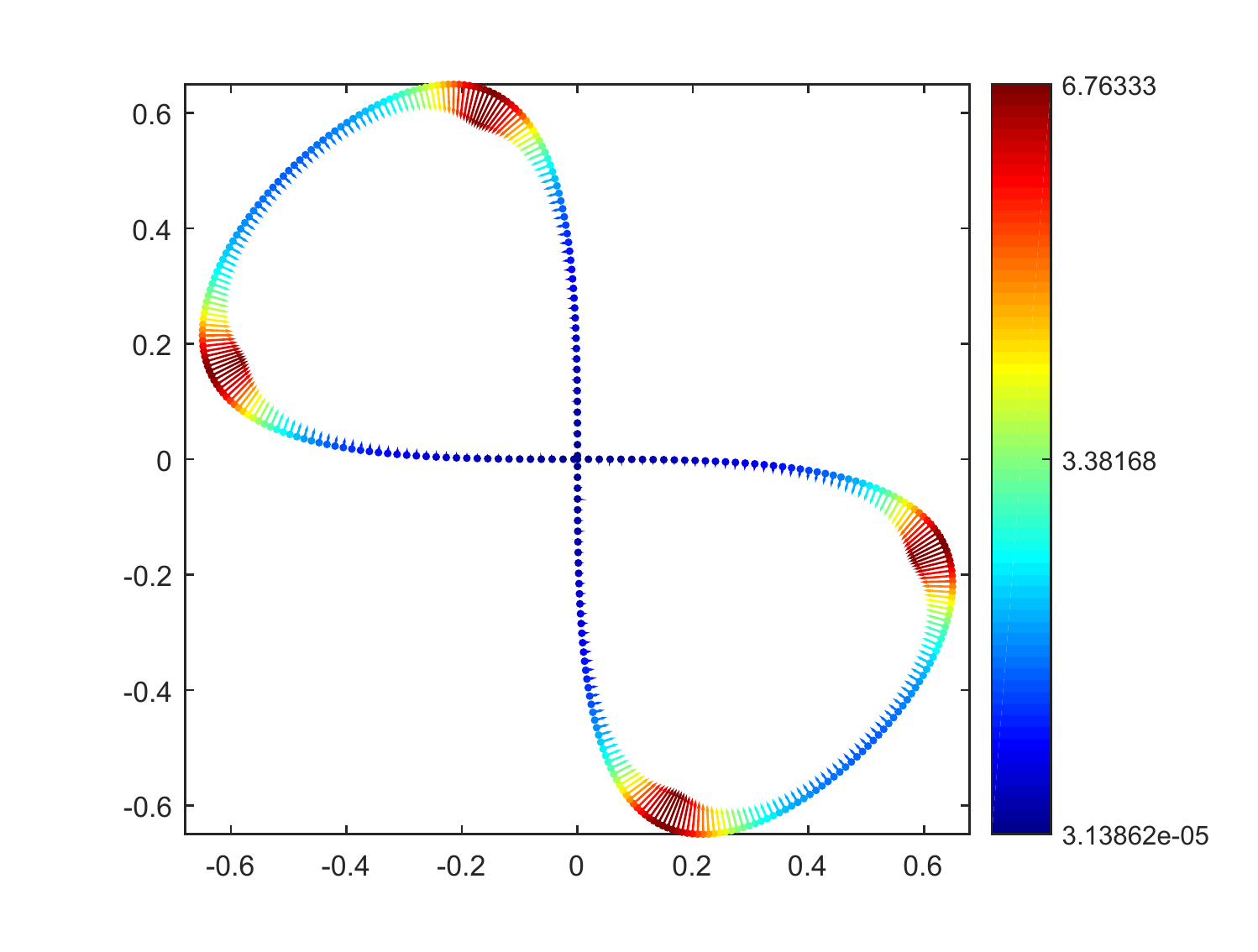}
\caption{Curvature vector and intensity computed with the natural kernel pair $(\rho_{exp},\xi_{exp})$ on the eight sampled with $N=10000$ points and with $\epsilon = 100/N = 0.01$, with exact tangents. For visualization purposes we only show $5\%$ of the points in the cloud.} \label{fig_eight}
\end{figure}

More generally, our model is able to deal correctly with singular configurations whose canonically associated varifold has a first variation $\delta V$ which is absolutely continuous with respect to $\V$. To illustrate this, we show the results of some tests performed on a union of two circles with equal radius and on a standard double bubble in the plane. 

First, we compare the behavior of $H_{\e,\rho,\xi}^{V}$ and $H_{\e,\rho,\xi}^{V\perp}$ in a neighborhood of an intersection point of the two circles (see Figure \ref{fig:doublecircle}). We define a point cloud as a uniform sampling of the union of both circles, with a total number of points $N=10000$ and with exact tangents. We also choose $(\rho_{exp},\xi_{exp})$ as natural kernel pair, and $\e = 100/N=0.01$ as in the previous test with the "eight". Figure \ref{fig:doublecircle} (a) and (b) show the curvature vectors and intensities of $H_{\e,\rho,\xi}^{V}$, while Figure \ref{fig:doublecircle} (c) shows them for $H_{\e,\rho,\xi}^{V\perp}$. From the point of view of pointwise almost everywhere convergence, both approximate curvatures behave equivalently well, since the error in the reconstruction of the curvature is localized in an $\e$-neighborhood of the crossing point. On one hand, due to the linearity of the first variation $\delta V$, the expected curvature $H$ of the union $\cC_{1}\cup \cC_{2}$ of the two circles at the crossing point $p$ is the average of the curvatures $H_{1}$ and $H_{2}$ of, respectively, $\cC_{1}$ and $\cC_{2}$ at $p$. Indeed $\delta V = H_{1}\, d\cH^{1}_{|\cC_{1}} + H_{2}\, d\cH^{1}_{|\cC_{2}}$, whence one deduces that $H(p) = \frac{H_{1}(p)+H_{2}(p)}{2}$ and if $p$ is an intersection point of the two circles, $|H(p)|=\sqrt{3}\approx1.73$ which is consistent with the numerical value obtained at $p$ (see Figure \ref{fig:doublecircle} (b)). On the other hand, the crossing point is negligible with respect to $\V$ and therefore the pointwise value of $H(p)$ is not relevant in the continuous setting. Nevertheless, in the discrete setting there is a significant difference between the two proposed definitions of approximate mean curvature. More precisely, the one provided by $H_{\e,\rho,\xi}^{V}$ enforces a continuous mean curvature even at the crossing point, where one obtains the expected average value $H(p) = \frac{H_{1}(p)+H_{2}(p)}{2}$, see Figure~\ref{fig:doublecircle} (b), whereas continuity cannot hold for $H_{\e,\rho,\xi}^{V,\perp}$, as one can see in Figure \ref{fig:doublecircle} (c).

\begin{center}
\setcounter{subfigure}{0}
\begin{figure}[!htbp]
\subfigure[]{\includegraphics[width=0.43\textwidth]{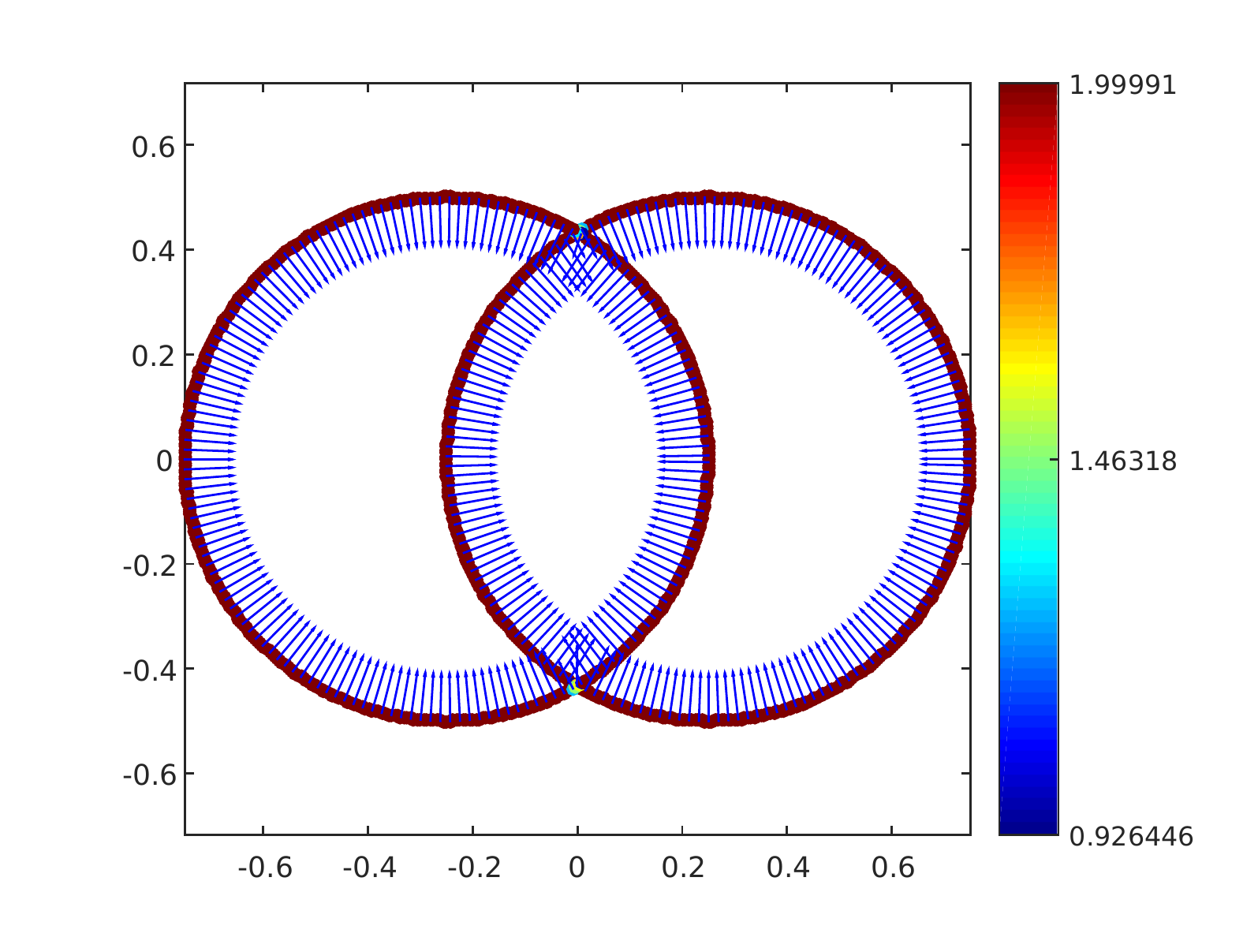}}
\subfigure[]{\includegraphics[width=0.43\textwidth]{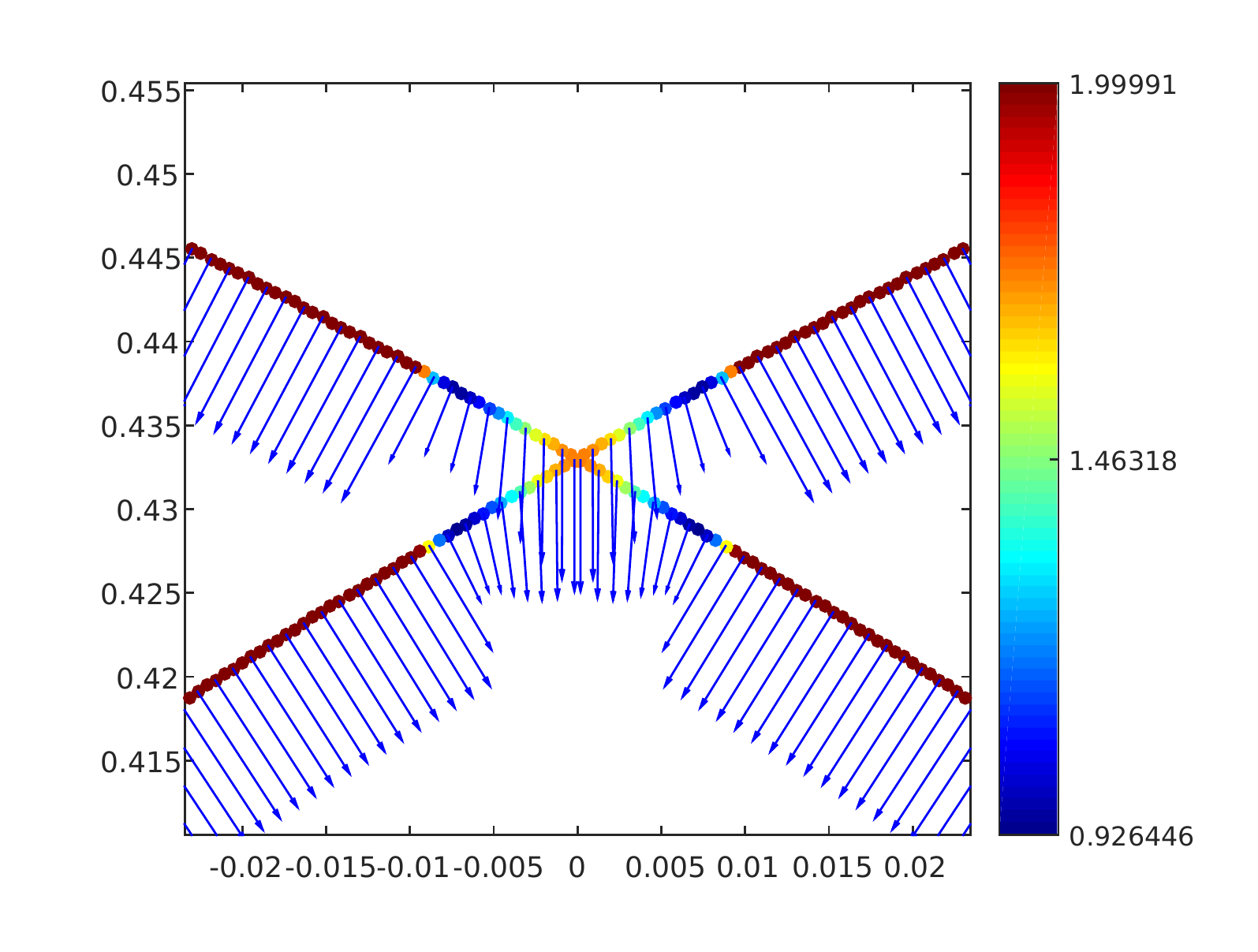}}
\subfigure[]{\includegraphics[width=0.43\textwidth]{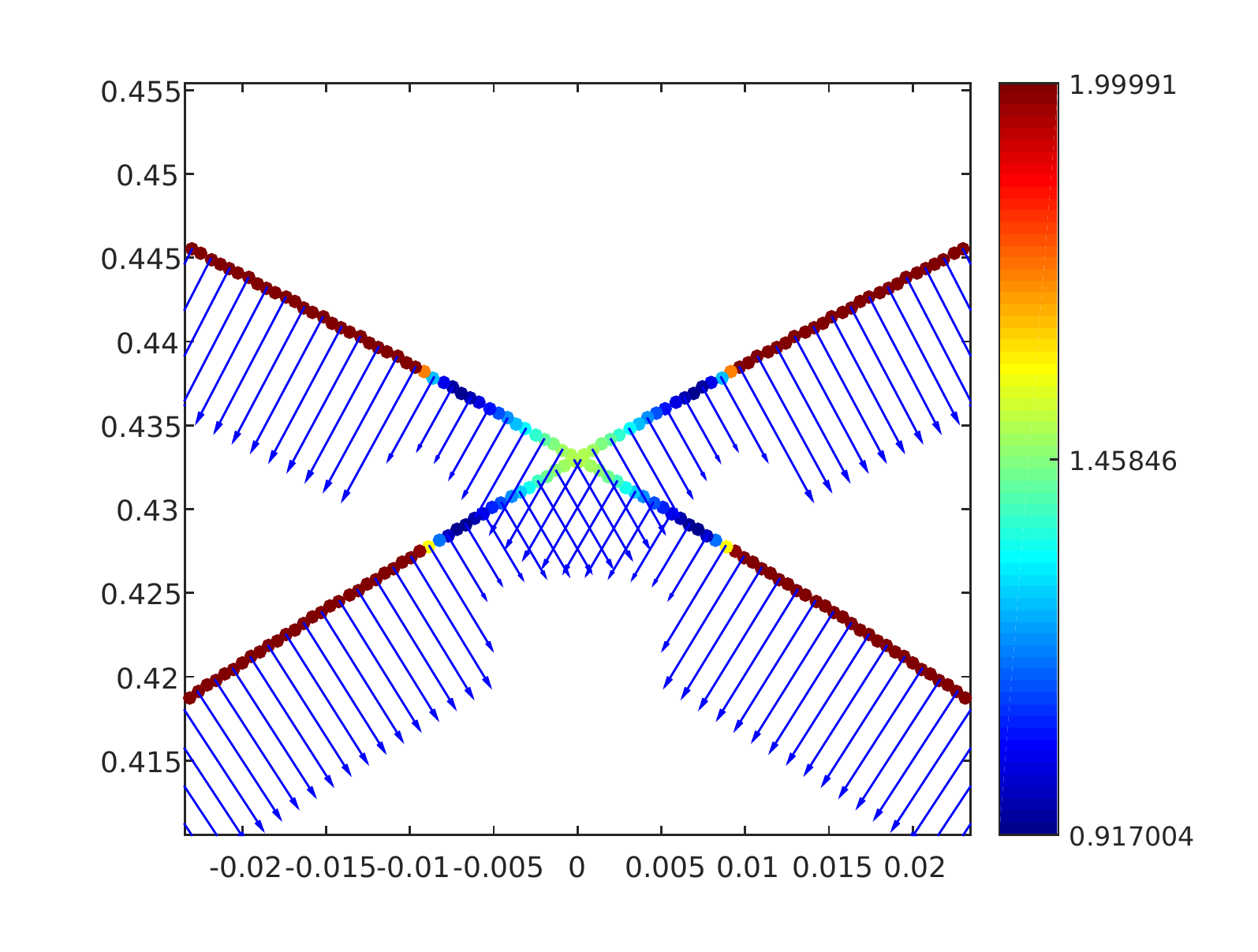}}
\caption{Curvature vector and intensity computed with the natural kernel pair $(\rho_{exp},\xi_{exp})$ on two intersecting circles sampled with $N =10000$ points and with $\epsilon = 100/N = 0.01$, without projection onto the normal in $(a)$ and $(b)$ and with projection on $(c)$. Tangents are exact.}\label{fig:doublecircle}
\end{figure}
\end{center}

Second, we consider a standard double bubble in $2$ dimensions (see Figure \ref{fig:2bubble2D}(a) and \cite{CicaleseLeonardiMaggi} for details on double bubbles), whose radii of the external boundary arcs are, respectively, $1$ and $0.6$. The corresponding point cloud varifold $V$ is obtained by a uniform sampling of $800$ points taken on the three arcs of the bubble, each endowed with a unit mass and tangent computed by regression. Again, we choose $(\rho_{exp},\xi_{exp})$ as natural kernel pair, and $\e = 0.15$. Figure \ref{fig:2bubble2D}(b) shows the curvature vectors and intensities of $H_{\e,\rho,\xi}^{V}$ (up to a fixed renormalization that is applied for a better visualization). In order to get rid of the oscillation of the curvature near the singularities (as it occurred in the previous test, see again Figure \ref{fig:doublecircle}) we have also applied a simple averaging of the reconstructed curvature at the scale $2\e$, which gives the nicer result shown in Figure \ref{fig:2bubble2D}(c). 
We remark that the curvature vector defined on points that are very close to the theoretical singularity is consistent with the one obtained by direct computation on the (continuous) standard double bubble. More precisely, we obtain a numerical value of $(0.107,-0.809)$ for the mean curvature near the singularity shown in Figure \ref{fig:2bubble2D}, to be compared with the expected value $(0,-0.839)$, hence with a relative error of $13\%$. If we redo the same experiment but with twice the number of points, that is $N=1600$ and  $\e=0.075$, we get a relative error of $7\%$. Further tests involving standard double bubbles will be described in the next section.

\begin{center}
\setcounter{subfigure}{0}
\begin{figure}[!htbp]
\subfigure[]{\includegraphics[width=0.25\textwidth]{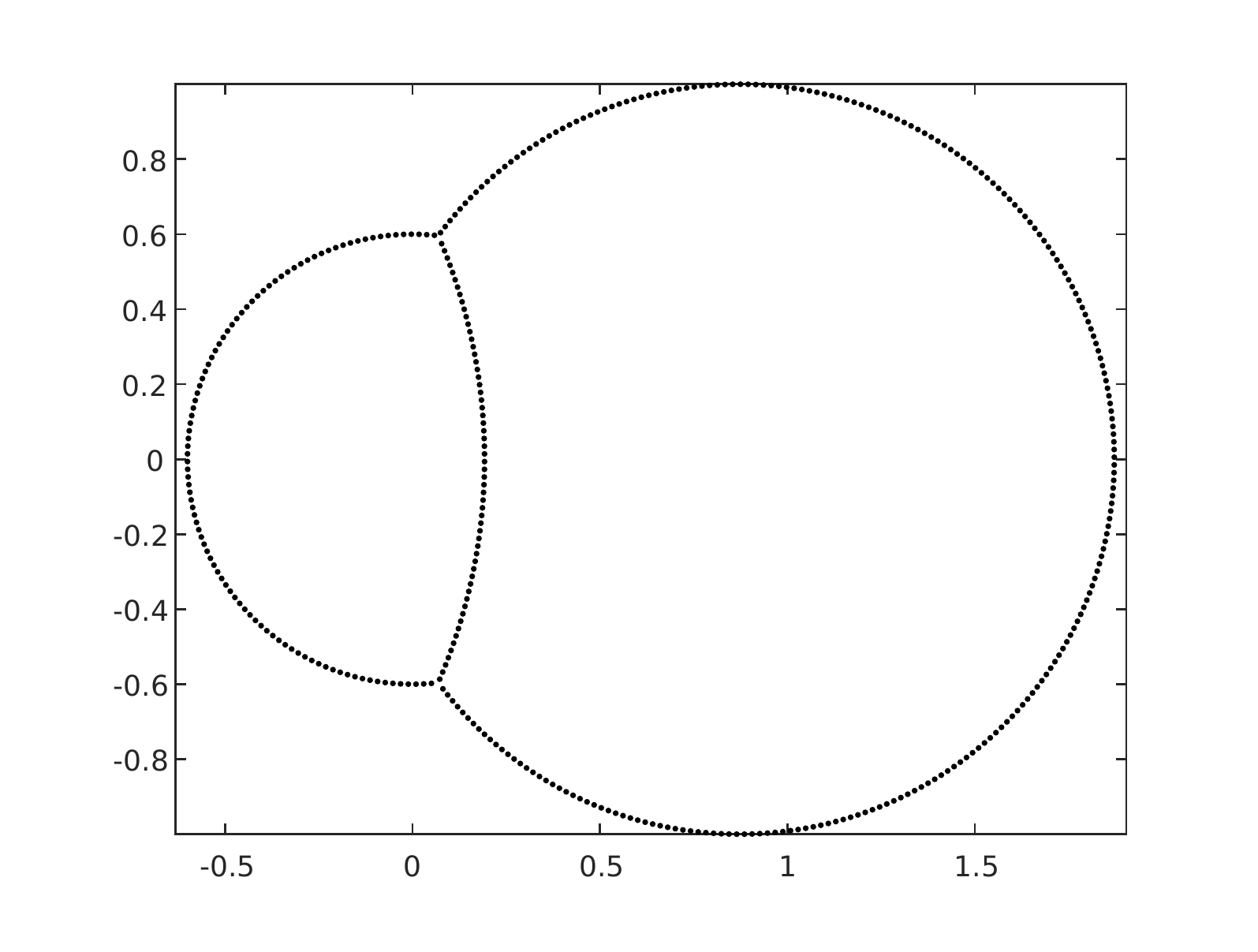}}\\
\subfigure[]{\includegraphics[width=0.48\textwidth]{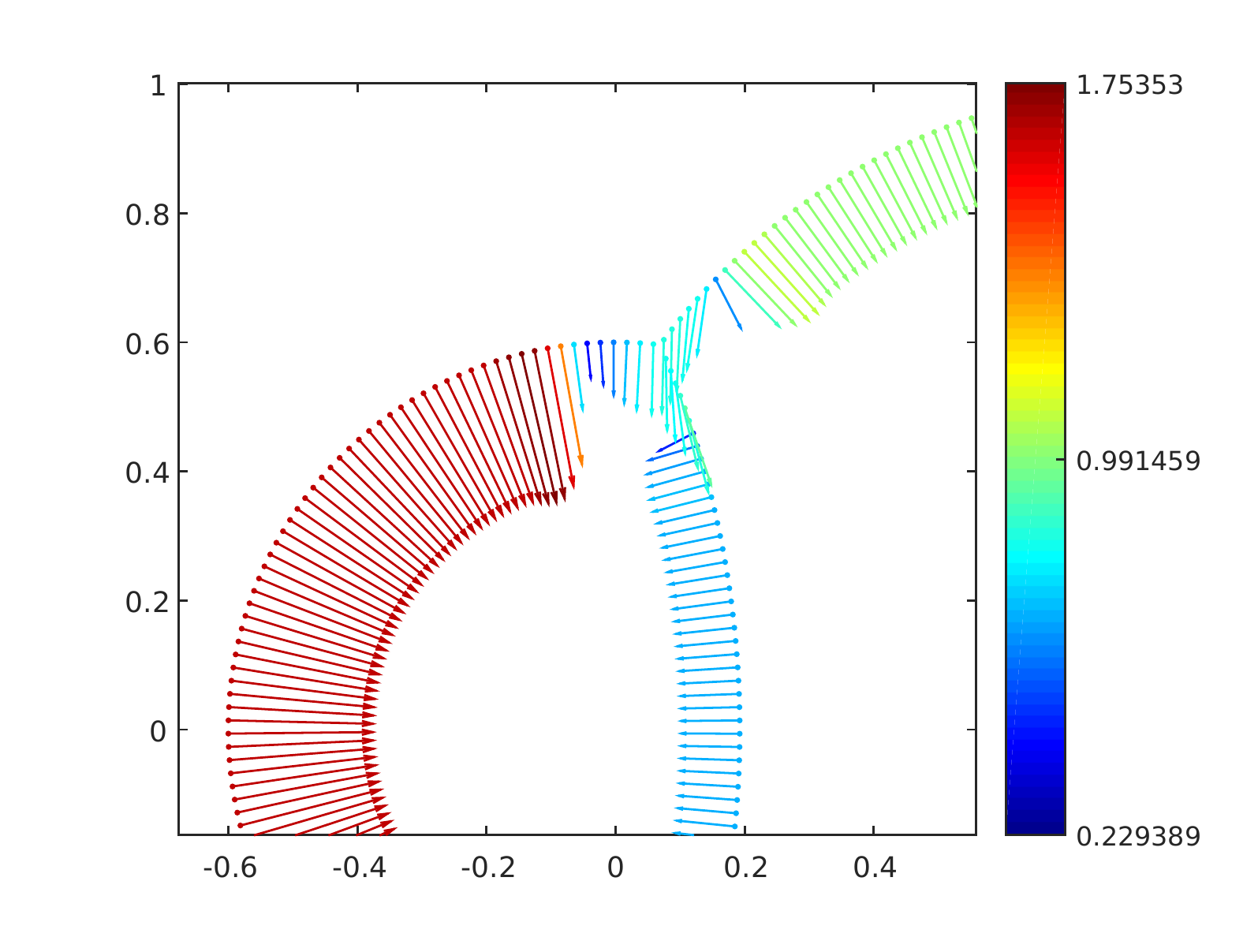}}
\subfigure[]{\includegraphics[width=0.48\textwidth]{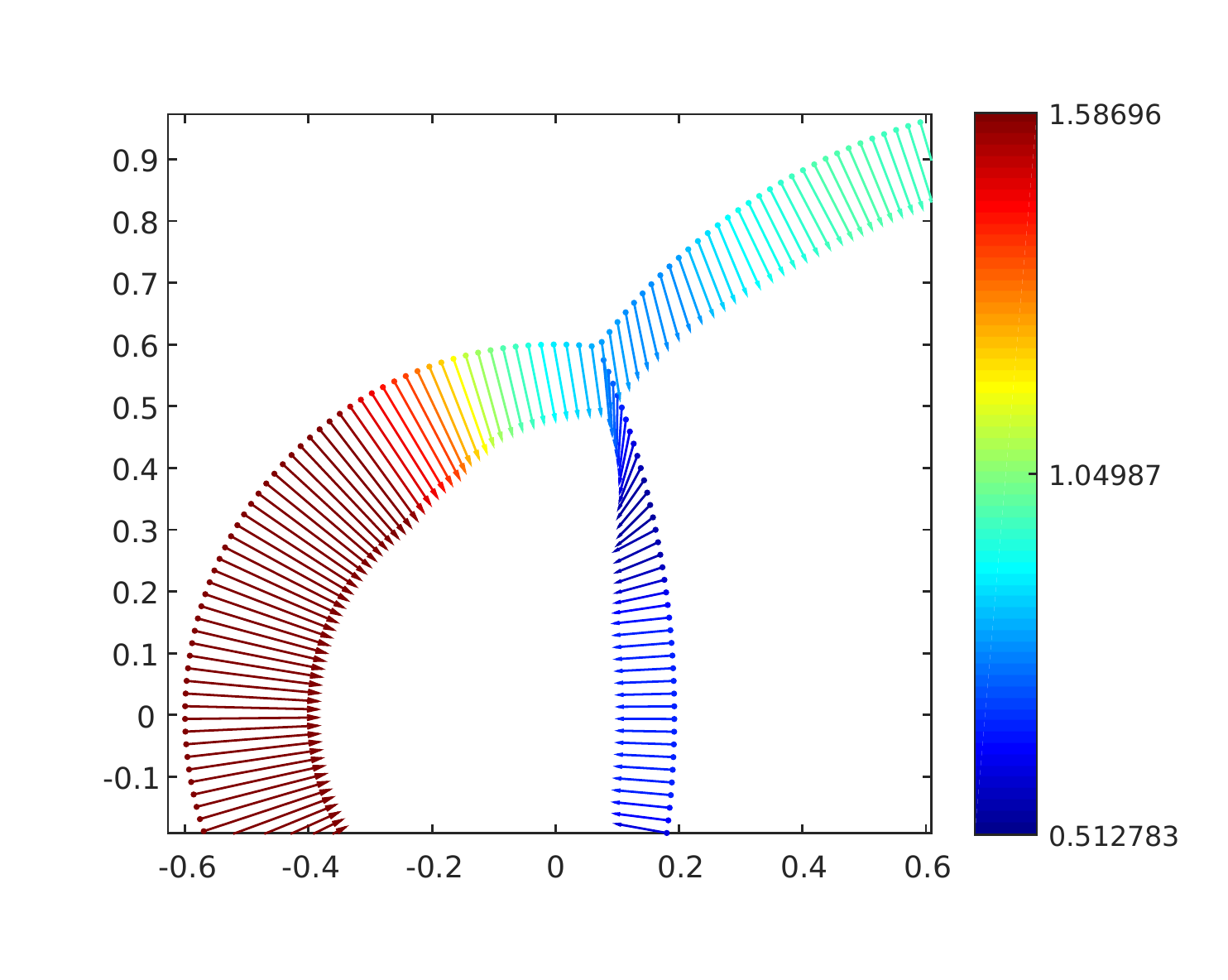}}
\caption{Curvature vectors and intensities computed with the natural kernel pair $(\rho_{exp},\xi_{exp})$ on a standard double bubble (radii $1$ and $0.6$) sampled with $N =800$ points and with $\epsilon = 0.15$, without projection onto the normal in $(b)$ and with additional averaging of the curvature at scale $2\e$ in $(c)$. Tangents are computed by regression.}\label{fig:2bubble2D}
\end{figure}
\end{center}

\subsection{Approximate mean curvatures of 3D point clouds}\label{3D}

In this last section we present some tests on 3D point clouds obtained either from parametrized shapes (specifically, a standard double bubble) or from given point cloud samples (a dragon and a statue).

In the first test (Figure~\ref{fig:2bubble3D}, a) we use colors (from blue for smaller values to red for larger values) to represent the intensities of the approximate mean curvature vectors computed for a point cloud discretization of the three spherical caps forming the boundary of a standard double bubble in 3D. The radii of the external caps are, respectively, $1$ and $0.7$. The cloud contains $N = 34378$ points and is endowed with tangents reconstructed via regression at the scale $\e \approx 0.111$. We compute the covariance matrix of centered coordinates (in a ball of radius $\e$) and we define the normal as the eigenvector associated with the smallest eigenvalue. The computation of the approximate mean curvature is performed in a ball of radius $\e$ as well, and we further average the approximate curvature at the scale $2\e$, as done in the test on the $2D$--double bubble. Moreover the cloud is an ``almost uniform'' discretization of the double bubble in the sense that some small ``holes'' are created along three meridian curves, as a consequence of rounding-type discretization errors.  All curvature vectors intensities are shown, but only (minus) the curvature vectors near the singular arc are represented for the sake of readability. As can be observed, these approximate vectors lie essentially in the same expected plane.
Even though a more uniform discretization can be constructed, we have preferred to keep the almost-uniform one in order to show the behavior of the (averaged) approximate mean curvature. The results of the test show a pattern similar to the one obtained in the 2D case in proximity of the singular circle (see Figure \ref{fig:2bubble3D}, a)). Moreover, when numerically computing the average of the intensity of the mean curvature along the singular circle, we obtain $1.51$, to be compared with $1.46$ which is an approximate value of the norm of the average of mean curvature vectors of the three  intersecting spheres.
Some small deviations from the true mean curvature are localized near the small ``holes''. Of course such deviations can be reduced by refining the discretization and by taking curvature averages over neighborhoods containing more points. The overall outcome shows that the reconstruction of the mean curvature near singularities is consistently enforced by our method. We provide in Figure~\ref{fig:2bubble3D} two more examples with a sampled double bubble. In b) and c) the bubble has external caps with same radius $r=1$. The central cap is therefore a disk. Again, the consistency of the curvature vectors computed near the singular arc can be observed. Only these vectors are shown in b), but all (minus) curvature vectors are shown in c).

\begin{figure}[!htbp]
\begin{tabular}{cc}
\multicolumn{2}{c}{\subfigure[]{\includegraphics[width=0.65\textwidth]{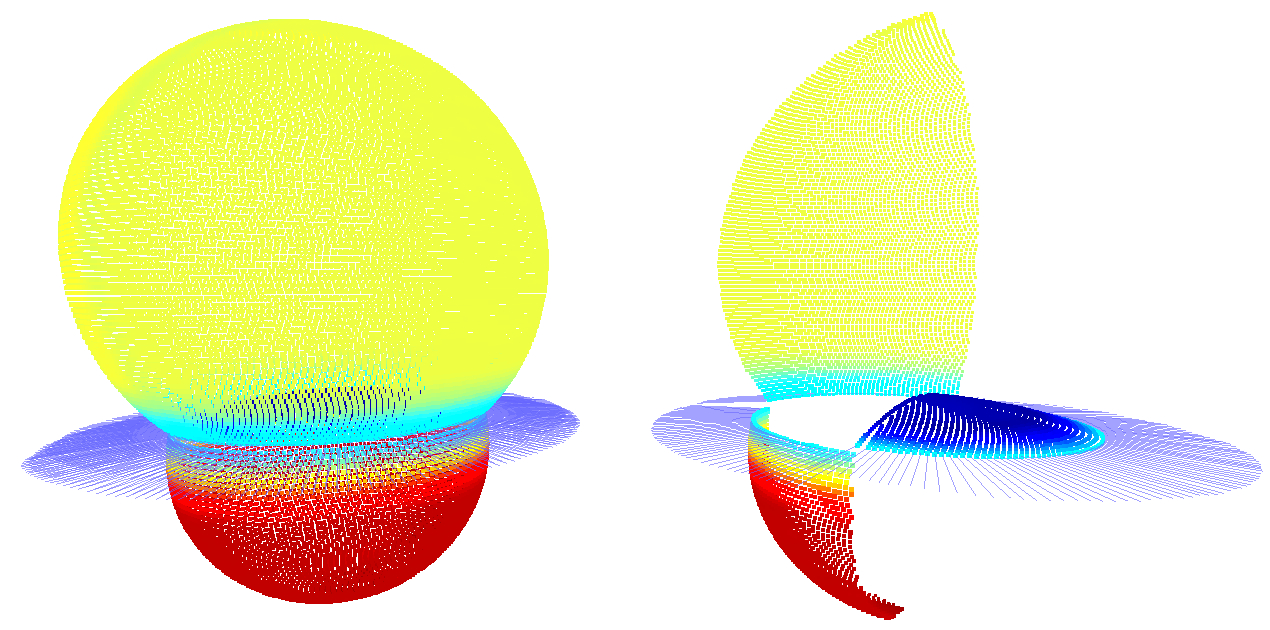}}}\\
\subfigure[]{\includegraphics[width=0.5\textwidth]{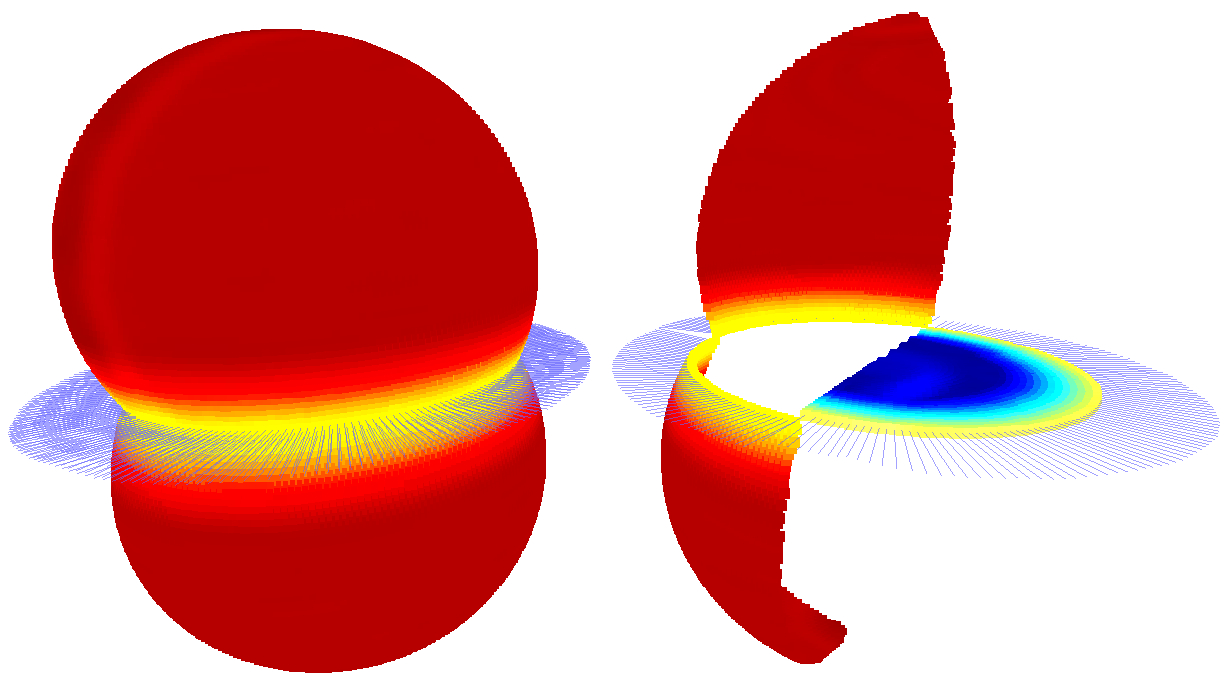}}&
\subfigure[]{\includegraphics[width=0.5\textwidth]{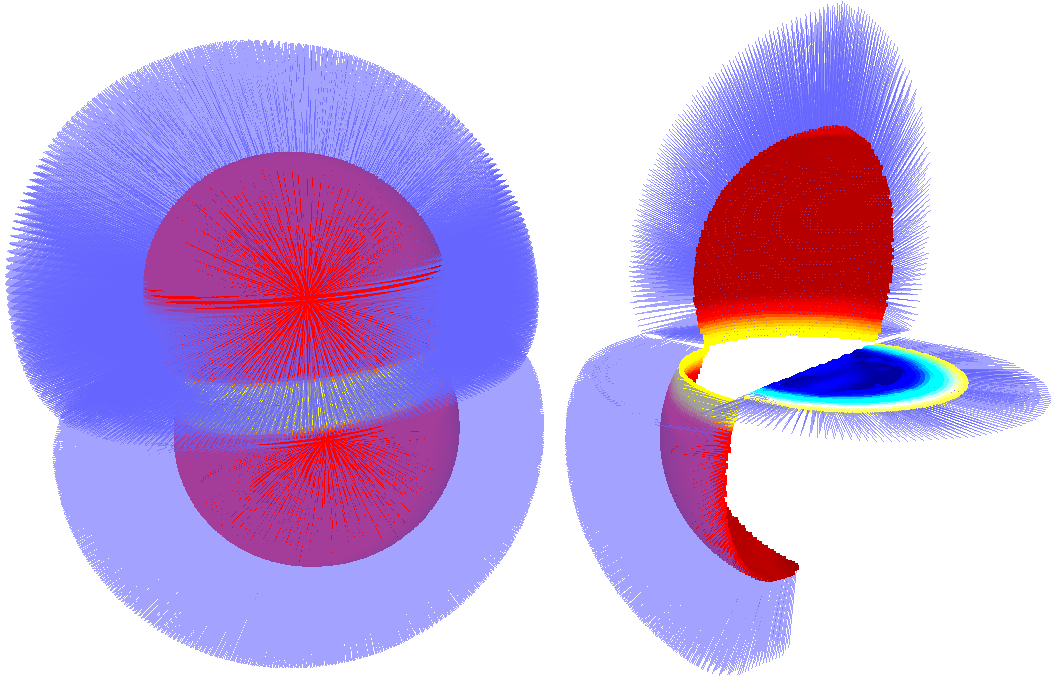}}
\end{tabular}
\caption{Curvature vectors and their intensities computed with the natural kernel pair $(\rho_{exp},\xi_{exp})$ on sampled $3D$--double bubbles (show in full and partial views). In Figure a), the bubble has external caps with radii $0.7$ and $1$,  is sampled with $N=34378$ points, and the computations are made with $\epsilon \approx 0.111$. The curvature vectors (with minus sign for the sake of readability) are shown only for the points which are closest to the singular circle. In b) and c), the bubble has externals caps with same radius $1$, is sampled with 33275 points, and $\epsilon\approx 0.131$. All curvature vectors (with minus sign) are shown in c). To improve the visualization, points are shown with larger size in b) and c). 
\label{fig:2bubble3D}}
\end{figure}

Our next 3D test point clouds are a "dragon" with $N = 435\,545$ points (Figure~\ref{fig_dragon}), and a statue with $543\,524$ points (Figure~\ref{buddha-fig}). We show with colors the norm of the approximate mean curvature vectors (computed with the natural kernel pair $(\rho_{exp},\xi_{exp})$) with post-projection onto the normals. In both cases, as the tangent plane is not a-priori known, we compute the normal direction at each point using regression. As the shapes are assumed to be regular, we use Formula~\eqref{eq_formulaModifiedMC_PointCloud}, that is, with projection onto the normal at the point and without additional averaging.

\setcounter{subfigure}{0}
\begin{figure}[!htp]
\begin{tabular}{cc}
\multicolumn{2}{c}{\subfigure[]{\includegraphics[width=0.98\textwidth]{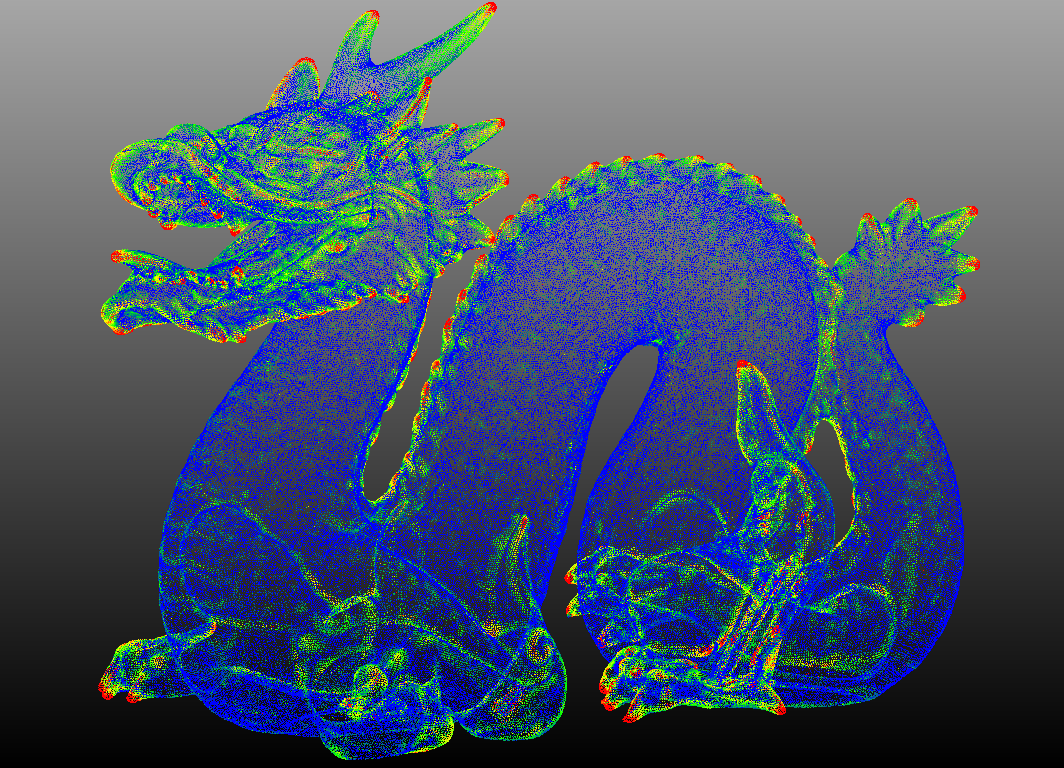}}} \\

\subfigure[Zoom in on dragon's tail]{\includegraphics[width=0.36\textwidth]{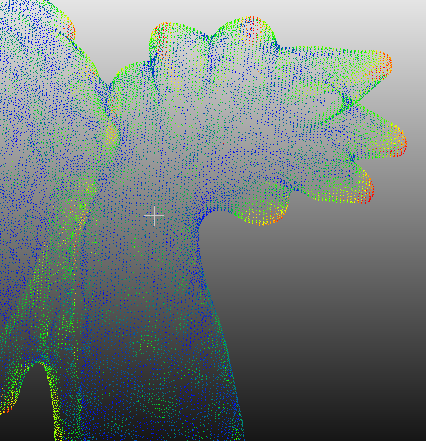}} & \subfigure[Zoom in  on dragon's head]{\includegraphics[width=0.60\textwidth]{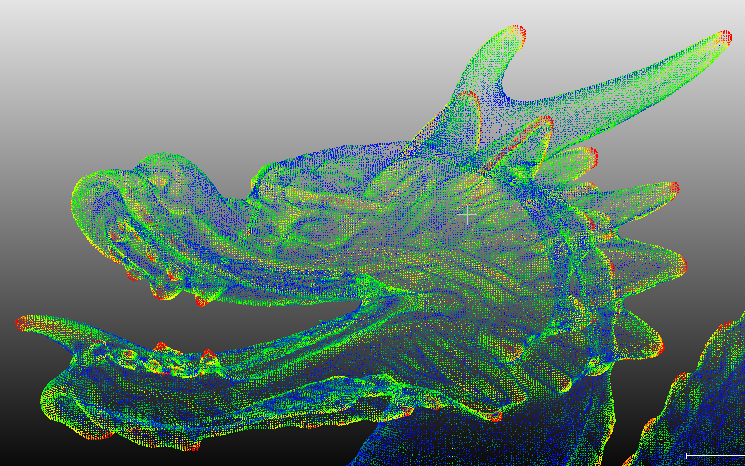}} 
\end{tabular}
\caption{Intensities of the approximate mean curvature of a dragon point cloud ($435\,545$ points, diameter$=1$) with $\epsilon = 0.007$.} \label{fig_dragon}
\end{figure}
\begin{figure}[!htp]
\centering
\subfigure{\includegraphics[width=0.45\textwidth]{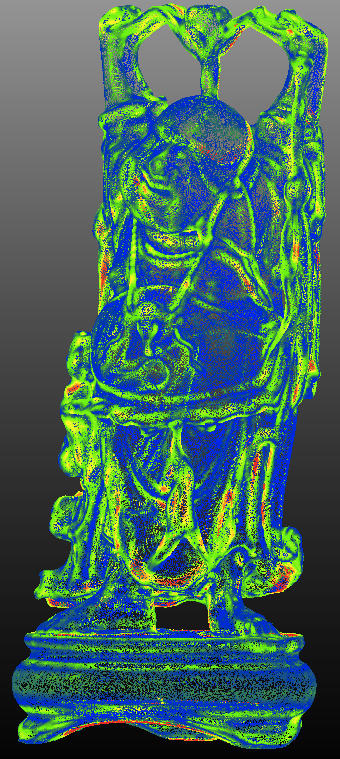}}
\subfigure{\includegraphics[width=0.45\textwidth]{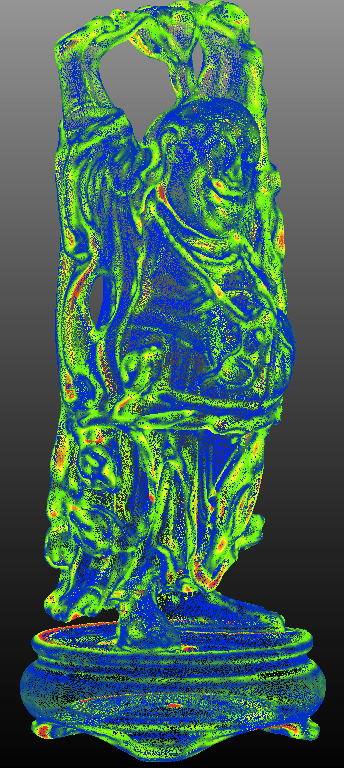}}
\caption{Intensities of the approximate mean curvature of a point cloud with 543\,524 points (horizontal side length=$0.41$, height=$1$.)}\label{buddha-fig}
\end{figure}

\bibliographystyle{alpha} 
\bibliography{biblio_regularisation}

\end{document}